\theoremstyle{plain}
\newtheorem{theorem}{\bf Theorem}[section]
\newtheorem{proposition}[theorem]{\bf Proposition}
\newtheorem{mproposition}[theorem]{\bf Main Proposition}
\newtheorem{lemma}[theorem]{\bf Lemma}
\newtheorem{corollary}[theorem]{\bf Corollary}
\newtheorem{conjecture}[theorem]{\bf Conjecture}
\theoremstyle{definition}
\newcommand{\vp}{\mathsf v}
\newcommand{\N}{\mathbb N}
\newcommand{\Z}{\mathbb Z}
\newcommand{\la}{\langle}
\newcommand{\ra}{\rangle}
\newcommand{\be}{\begin{equation}}
\newcommand{\ee}{\end{equation}}
\newcommand{\und}{\;\mbox{ and }\;}
\newcommand{\nn}{\nonumber}
\newcommand{\ber}{\begin{eqnarray}}
\newcommand{\eer}{\end{eqnarray}}
\newcommand{\Fc}{\mathcal F}
 \DeclareMathOperator{\ord}{ord}
 \DeclareMathOperator{\supp}{supp}
\newcommand{\Sum}[2]{\underset{#1}{\overset{#2}{\sum}}}
\renewcommand{\t}{\, | \,}
\newcommand{\Blfloor}{\Big\lfloor}
\newcommand{\Brfloor}{\Big\rfloor}
\begin{document}

\title{On  products of $k$ atoms II}

\address{Institute for Mathematics and Scientific Computing\\ University of Graz, NAWI Graz\\ Heinrichstra{\ss}e 36\\ 8010 Graz, Austria}
\email{alfred.geroldinger@uni-graz.at}

\address{Department of Mathematical Sciences \\ University of Memphis \\ Memphis, TN 38152, USA}
\email{diambri@hotmail.com}

\address{School of Mathematics \\ South China Normal University \\ Guangzhou 510631 \\ P.R. China}
\email{mcsypz@mail.sysu.edu.cn}

\author{Alfred Geroldinger and David J. Grynkiewicz and Pingzhi Yuan}

\thanks{This work was supported by the Austrian Science Fund FWF, Project No. P26036-N26.
Part of this manuscript was written while  the first author visited South China Normal University in Guangzhou. He wishes to thank the
School of Mathematics for their support and hospitality.}

\keywords{non-unique factorizations, sets of lengths, Krull monoids, zero-sum sequences}

\subjclass[2010]{11B30, 11R27, 13A05, 13F05, 20M13}

\begin{abstract}
Let $H$ be a Krull monoid with  class group $G$ such that every class contains a prime divisor (for example,  rings of integers in algebraic number fields or holomorphy rings in algebraic function fields). For $k \in \mathbb N$, let $\mathcal U_k (H)$ denote the set of all $m \in \mathbb N$ with the following property: There exist atoms $u_1, \ldots, u_k, v_1, \ldots, v_m \in H$ such that $u_1 \cdot \ldots \cdot u_k = v_1 \cdot \ldots \cdot v_m$. Furthermore, let $\lambda_k (H) = \min \mathcal U_k (H)$  and $\rho_k (H) = \sup \mathcal U_k (H)$. The sets $\mathcal U_k (H) \subset \mathbb N$ are  intervals which are finite if and only if $G$ is finite.  Their minima $\lambda_k (H)$ can be expressed in terms of $\rho_k (H)$. The invariants $\rho_k (H)$ depend only on the class group $G$, and in the present paper they are studied with new methods from Additive Combinatorics.
\end{abstract}
\maketitle

\bigskip
\section{Introduction}
\bigskip

Let $H$ be a (commutative and cancelative) monoid. If an element $a \in H$ has a factorization $a = u_1 \cdot \ldots \cdot u_k$  into atoms $u_1, \ldots, u_k \in H$, then $k$ is called the length of the factorization, and the set $\mathsf L (a)$ of all possible lengths is called the set of lengths of $a$. For $k \in \mathbb N$, let $\mathcal U_k (H)$ denote the set of all $m \in \mathbb N$ with the following property: There exist atoms $u_1, \ldots, u_k, v_1, \ldots, v_m \in H$ such that $u_1 \cdot \ldots \cdot u_k = v_1 \cdot \ldots \cdot v_m$. Thus $\mathcal U_k (H)$ is the union of all sets of lengths containing $k$. Sets of lengths (and all invariants derived from them, such as  their unions) are the most investigated invariants in factorization theory. The sets $\mathcal U_k (H)$ were introduced by S.T. Chapman and W.W. Smith in  Dedekind domains (\cite{Ch-Sm90a})  and since then have been studied in settings ranging from numerical monoids over  Mori domains to monoids of modules (\cite{Fr-Ge08, Bl-Ga-Ge11a, B-C-H-P08, Ge-Ka-Re15a, Ba-Ge14b}).
Their suprema $\rho_k (H) = \sup \mathcal U_k (H)$  and their minima $\lambda_k (H) = \min \mathcal U_k (H)$ have received special attention. Indeed, the invariants $\rho_k (H)$ were first studied in the 1980s for rings of integers in algebraic number fields (\cite{Cz81, Sa-Za82}). The supremum over all $\rho_k (H)/k$ is called the elasticity of $H$, whose investigation was a key topic in early factorization theory (see \cite{An97a} for a survey, or to pick a few from many, see \cite[Problem 38]{Ch-Gl00a} and \cite{Ca-Ch95,A-C-C-S95,  Ka05a,Ch-Cl05, B-C-C-M09}).

In the present paper, we focus on Krull monoids having the property that every class in the class group contains a prime divisor. In Section \ref{2} we present the necessary background and Proposition \ref{2.4} gathers the present state of the art. Among others, if $H$ is such a Krull monoid with class group $G$ and $2 < |G| < \infty$, then
$\mathcal U_k (H) \subset \mathbb N$ is a finite interval, hence $\mathcal U_k (H) = [\lambda_k (H), \rho_k (H)]$, and  its minimum $\lambda_k (H)$ can be expressed in terms of $\rho_k (H)$. Moreover,  $\rho_k (H)$ depends only on the class group $G$ and hence it can be studied with methods from Additive Combinatorics. This is the starting point for the present paper. In Section \ref{3} we discuss open problems, formulate two conjectures (Conjecture \ref{3.3}),  and outline the program of the paper. The main results are Theorems \ref{4.1} and \ref{5.1}. The latter result is based on the recent characterization of all minimal zero-sum sequences of maximal length in groups of rank two (see Main Proposition \ref{5.4}).

\bigskip
\section{Unions of sets of lengths in Krull monoids: Background} \label{2}
\bigskip

Let $\mathbb N$ denote the set of positive integers and set $\mathbb N_0 = \mathbb N \cup
\{ 0 \}$. For  real numbers $a, b \in \mathbb R$, we denote by  $[a, b] = \{ x \in
\mathbb Z \mid a \le x \le b\}$ the discrete interval.   By a   monoid,  we  mean a commutative
semigroup with identity which satisfies the cancellation law (that
is, if $a,b ,c$ are elements of the monoid with $ab = ac$, then $b =
c$ follows). The multiplicative semigroup of non-zero elements of an
integral domain is a monoid.

Let $G$ be an  abelian group, and
let \ $A,\, B \subset G$ \ be subsets. Then $\langle A \rangle \subset G$ is the subgroup generated by $A$, $-A = \{-a \mid a \in A \}$, and $A + B = \{ a
+ b \mid a \in A, b \in B \}$ \ is the  sumset of $A$ and $B$. Furthermore,  $A$ is a generating set of $G$ if $\langle A \rangle = G$, and $A$ is a basis of $G$ if all elements of $A$ are nonzero and $G = \oplus_{a \in A} \langle a \rangle$.

\medskip
\noindent
{\bf Monoids and Sets of Lengths.}
A monoid $F$ is  free abelian, with basis $P \subset
F$ and we write $F = \mathcal F (P)$, if every $a \in F$ has a unique representation of the form
\[
a = \prod_{p \in P} p^{\mathsf v_p(a) } \quad \text{with} \quad
\mathsf v_p(a) \in \N_0 \ \text{ and } \ \mathsf v_p(a) = 0 \ \text{
for almost all } \ p \in P \,.
\]

Let $H$ be a monoid. We
denote by $H^{\times}$ the set of invertible elements of $H$ and by $\mathsf q (H)$ a quotient group of
$H$. For a subset $H_0 \subset H$, we denote by $[H_0] \subset H$
the submonoid generated by $H_0$. Let \ $a, b \in H$. We say that \
$a$  divides $b$ \ (and we write $a \t b$) if there is an
element $c \in H$ such that $b = ac$.
We denote by $\mathcal A (H)$  the   set of atoms (irreducible elements) of $H$. If $a=u_1 \cdot \ldots \cdot u_k$, where $k \in \N$ and $u_1, \ldots, u_k \in \mathcal A (H)$, then $k$ is called the length of the factorization and  $\mathsf L (a) = \{ k \in \N \mid a \ \text{has a factorization of length } \ k \} \subset \N$ is the set of lengths of $a$. For convenience, we set $\mathsf L (a) = \{0\}$ if $a \in H^{\times}$.  Furthermore, we denote by
\[
\mathcal L (H) = \{\mathsf L (a) \mid a \in H \} \quad \text{the  system of sets of lengths of} \quad H \,.
\]
Next we define the central concept of this paper. Let $k \in \mathbb N$ and suppose that  $H \ne H^{\times}$. Then
\[
\mathcal U_k (H) \ = \ \bigcup_{a \in H, \, k \in \mathsf L (a)}  \mathsf L (a)
\]
is the union of all sets of lengths containing $k$. Thus, $\mathcal U_k (H)$ is the set of all $m \in \N$ such that there are atoms $u_1, \ldots, u_k, v_1, \ldots, v_m$ with $u_1 \cdot \ldots \cdot u_k = v_1 \cdot \ldots \cdot v_m$.
Finally, we define
\[
\rho_k (H) = \sup \, \mathcal U_k (H) \quad \text{  and } \quad \lambda_k (H)= \min \ \mathcal U_k (H) \,.
\]

\medskip
\noindent
{\bf Krull monoids.} A monoid homomorphism  $\varphi \colon H \to F$ is said to be a divisor homomorphism if $\varphi (a) \t \varphi (b)$ in $F$ implies that $a \t b$ in $H$ for all $a, b \in H$. A monoid $H$ is said to be a Krull monoid if one of the following equivalent properties is satisfied (see \cite[Theorem 2.4.8]{Ge-HK06a} or \cite{HK98}):
\begin{enumerate}
\item[(a)] $H$ is completely integrally closed and satisfies the ascending chain condition on divisorial ideals.

\item[(b)] $H$ has a divisor homomorphism into a free abelian monoid.

\item[(c)] $H$ has a divisor theory: this is a divisor homomorphism $\varphi \colon H \to F = \mathcal F (P)$ into a free abelian monoid such that for each $p \in P$ there is a finite set $E \subset H$ with $p = \gcd \big( \varphi (E) \big)$.
\end{enumerate}

Let $H$ be a Krull monoid. Then every non-unit has a factorization into atoms, and all sets of lengths are finite. A divisor theory $\varphi \colon H \to F=\mathcal F (P)$ is essentially unique, and the class group
$\mathcal C (H) = \mathsf q (F)/\mathsf q (\varphi (H))$  depends only on $H$. It will be written additively, and we say that every class contains a prime divisor if, for every $g \in \mathcal C (H)$,
there is a $p \in P$ with $p \in g$.

An integral domain $R$ is a Krull domain if and only if its
multiplicative monoid $R \setminus \{0\}$ is a Krull monoid, and  Property (a) shows that a
noetherian domain is Krull if and only if it is integrally closed.
Rings of integers, holomorphy rings in algebraic function fields, and
regular congruence monoids in these domains are Krull monoids with finite
class group such that every class contains a prime divisor
(\cite[Section 2.11]{Ge-HK06a}). Monoid domains and power series
domains that are Krull are discussed in \cite{Gi84, Ki01, Ki-Pa01}. For monoids of modules which are Krull we refer the reader to \cite{Ba-Wi13a, Ba-Ge14b,Fa06a}.

Main portions of the arithmetic of a Krull monoid---in particular, all questions dealing with sets of lengths---can be studied in  the monoid of zero-sum sequences over its class group. We  provide the relevant concepts and summarize the connection in the next subsection.

\medskip
\noindent {\bf Transfer homomorphisms and Zero-sum sequences.} Let $G$ be an additively written abelian group,   $G_0 \subset G$   a subset, and let $\mathcal F (G_0)$ be the free abelian monoid with basis $G_0$. According to the tradition of combinatorial number theory, the elements of $\mathcal F(G_0)$ are called \ {\it sequences} over \
$G_0$. If $S = g_1 \cdot \ldots \cdot g_l$, where $l \in \N_0$ and $g_1, \ldots , g_l \in G_0$, then $\sigma (S) = g_1+ \ldots + g_l$ is called the sum of $S$, and the monoid
\[
\mathcal B(G_0) = \{ S  \in \mathcal F(G_0) \mid \sigma (S) =0\} \subset \mathcal F (G_0)
\]
is called the \ {\it monoid of zero-sum sequences} \ over \ $G_0$.
Since the embedding $\mathcal B (G_0) \hookrightarrow \mathcal F (G_0)$ is a divisor homomorphism, Property (b) shows that $\mathcal B (G_0)$ is a Krull monoid. The monoid $\mathcal B (G)$ is factorial if and only if $|G|\le 2$. If $|G| \ge 3$, then $\mathcal B (G)$ is a Krull monoid with class group isomorphic to $G$ and every class contains precisely one prime divisor.

For every arithmetical invariant \ $*(H)$ \ defined for a monoid
$H$, it is usual to  write $*(G)$ instead of $*(\mathcal B(G))$ (although this is an abuse of language,
but there will be no danger of confusion). In
particular, we set \ $\mathcal A (G) = \mathcal A (\mathcal B (G))$, $\mathcal L (G) = \mathcal L (\mathcal B (G))$, $\mathcal U_k (G) = \mathcal U_k (\mathcal B (G))$, $\rho_k (G) = \rho_k (\mathcal B (G))$, and $\lambda_k (G) = \lambda_k ( \mathcal B (G))$.

\smallskip
The next two propositions reveal the universal role of monoids of zero-sum sequences.

\smallskip
\begin{proposition} \label{2.1}
Let $H$ be a Krull monoid with class group $G$ such that every class contains a prime divisor. Then there is a transfer homomorphism $\boldsymbol \beta \colon   H \to \mathcal B (G)$.
In particular, for every $k \in \N$, we have
\[
\mathcal U_k (H) = \mathcal U_k  (G)  \,, \ \lambda_k (H) = \lambda_k (G)  \,,   \quad \text{and} \quad \rho_k (H) =  \rho_k  (G)   \,.
\]
\end{proposition}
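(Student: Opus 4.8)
The plan is to proceed in two stages: first construct the map $\boldsymbol\beta$ and check that it is a transfer homomorphism in the standard sense, and then invoke the general principle that transfer homomorphisms preserve sets of lengths to obtain the three claimed equalities simultaneously.

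For the construction, I would start from a divisor theory $\varphi \colon H \to F = \mathcal F(P)$, which exists by property (c) of the definition of a Krull monoid. By definition the class group is $\mathcal C(H) = \mathsf q(F)/\mathsf q(\varphi(H)) \cong G$, so there is a natural epimorphism assigning to each $p \in P$ its class $[p] \in G$. This extends uniquely to a monoid homomorphism $\tilde\beta \colon \mathcal F(P) \to \mathcal F(G)$ via $\prod_p p^{\mathsf v_p} \mapsto \prod_p [p]^{\mathsf v_p}$, and I would set $\boldsymbol\beta = \tilde\beta \circ \varphi$. The key observation is that for $a \in H$ the element $\varphi(a)$ lies in $\varphi(H)$, so its class in $G$ is $0$; equivalently, the sequence $\boldsymbol\beta(a)$ has sum $\sigma(\boldsymbol\beta(a)) = [\varphi(a)] = 0$ and hence lies in $\mathcal B(G)$. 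Thus $\boldsymbol\beta$ is a well-defined homomorphism $H \to \mathcal B(G)$.

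Next I would verify the two transfer-homomorphism axioms. The surjectivity-up-to-units axiom is exactly where the hypothesis that every class contains a prime divisor enters: given $S = g_1 \cdot \ldots \cdot g_\ell \in \mathcal B(G)$, choosing primes $p_i \in P$ with $[p_i] = g_i$ produces $x = p_1 \cdot \ldots \cdot p_\ell \in F$ whose class is $\sigma(S) = 0$; since $\varphi$ is a divisor theory, $\varphi(H)$ is saturated in $F$, so $x \in \mathsf q(\varphi(H)) \cap F = \varphi(H)$, and any preimage $a$ satisfies $\boldsymbol\beta(a) = S$. The lifting axiom, namely that every factorization of $\boldsymbol\beta(a)$ in $\mathcal B(G)$ can be pulled back to a factorization of $a$ in $H$, is the technical heart of the argument and the step I expect to be the main obstacle. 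It requires using the divisor-homomorphism property of $\varphi$ to transport a decomposition of the sequence $\boldsymbol\beta(a)$ into a divisor decomposition in $\mathcal F(P)$ and then back into a genuine factorization in $H$, checking that the divisors of $\varphi(a)$ in $F$ arising from zero-sum subsequences correspond to honest divisors of $a$ in $H$.

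Once $\boldsymbol\beta$ is known to be a transfer homomorphism, the three equalities follow formally. The defining property of transfer homomorphisms yields $\mathsf L_H(a) = \mathsf L_{\mathcal B(G)}(\boldsymbol\beta(a))$ for every $a \in H$, and since $\boldsymbol\beta$ is onto (and $\mathcal B(G)$ is reduced), the systems of sets of lengths coincide, $\mathcal L(H) = \mathcal L(G)$. As $\mathcal U_k(H)$ is by definition the union of all $L \in \mathcal L(H)$ with $k \in L$, this immediately gives $\mathcal U_k(H) = \mathcal U_k(G)$, whence $\rho_k(H) = \sup \mathcal U_k(H) = \sup \mathcal U_k(G) = \rho_k(G)$ and $\lambda_k(H) = \min \mathcal U_k(H) = \min \mathcal U_k(G) = \lambda_k(G)$ follow at once. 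I would note that both the construction of $\boldsymbol\beta$ and the length-preservation property are established in the monograph of Geroldinger and Halter-Koch, so in a final write-up one could compress the first two stages into a citation and record only the short deduction for the unions.
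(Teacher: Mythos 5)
Your proposal is correct and takes essentially the same approach as the paper: the paper's entire proof of Proposition \ref{2.1} is the citation \cite[Theorem 3.4.10]{Ge-HK06a}, and the argument you outline (composing a divisor theory $\varphi \colon H \to \mathcal F(P)$ with the map sending each prime to its class, verifying the transfer axioms via saturation of $\varphi(H)$ in $\mathcal F(P)$, then transporting sets of lengths and hence their unions, minima, and suprema) is precisely the proof carried out in that reference. Your closing remark that the construction and length-preservation could be compressed into a citation is exactly what the paper does, so no discrepancy remains beyond your (honestly flagged) decision to sketch rather than fully verify the lifting axiom, which the cited theorem supplies.
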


\begin{proof}
See \cite[Theorem 3.4.10]{Ge-HK06a}.
\end{proof}

Whereas the proof of the above result is quite straightforward,
there are recent deep results showing that there are non-Krull monoids (even non-commutative rings) which allow  transfer homomorphisms to monoids of zero-sum sequences.

\medskip
\begin{proposition} \label{2.2}~

\begin{enumerate}
\item Let $\mathcal O$ be a holomorphy ring in a global field $K$, $A$ a central simple algebra over $K$, and $H$ a classical maximal $\mathcal O$-order of $A$ such that every stably free left $R$-ideal is free. Then $\mathcal U_k (H) = \mathcal U_k (G)$ for every $k \in \N$, where  $G$ is a ray class group of $\mathcal O$ and hence finite abelian.

\smallskip
\item Let $H$ be a seminormal order in a holomorphy ring of a global field with principal order $\widehat H$ such that the natural map $\mathfrak X (\widehat H) \to \mathfrak X (H)$ is bijective and there is an isomorphism $\overline{\vartheta}\colon \mathcal{C}_v(H)\rightarrow \mathcal{C}_v(\widehat{H})$ between the $v$-class groups. Then $\mathcal U_k (H) = \mathcal U_k (G)$ for every $k \in N$, where $G = \mathcal{C}_v(H)$ is  finite abelian.
\end{enumerate}
\end{proposition}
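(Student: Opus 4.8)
The plan is to reduce both assertions to the single general principle that a transfer homomorphism preserves sets of lengths, and hence preserves every invariant built from them. Recall that a transfer homomorphism $\theta \colon H \to B$ satisfies $\mathsf L_H(a) = \mathsf L_B(\theta(a))$ for all $a \in H$ (see \cite[Proposition 3.2.3]{Ge-HK06a}), and that $\theta$ is surjective up to units; consequently the two systems of sets of lengths coincide, $\mathcal L(H) = \mathcal L(B)$. Since $\mathcal U_k(H) = \bigcup_{a \in H,\, k \in \mathsf L(a)} \mathsf L(a)$ is defined solely in terms of $\mathcal L(H)$, equality of the systems of sets of lengths immediately forces $\mathcal U_k(H) = \mathcal U_k(B)$ for every $k \in \N$. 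Thus in each of the two cases it suffices to exhibit a transfer homomorphism from $H$ (equipped with its factorization-theoretic structure) onto $\mathcal B(G)$ for the group $G$ named in the statement; the entire factorization-theoretic content of the proposition is then a one-line consequence, and all genuine difficulty is displaced into the construction of these transfer homomorphisms.

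For the first assertion, $H$ is a non-commutative ring, so I would first pass to the appropriate non-commutative framework: one works with the monoid of cancellative (regular) elements up to associativity and with \emph{rigid} factorizations into atoms, as developed by Baeth and Smertnig. The construction of the transfer homomorphism rests on the arithmetic of classical maximal $\mathcal O$-orders in a central simple algebra over a global field: the two-sided ideal theory is well behaved, and the reduced norm together with the approximation theorems of class field theory identifies the relevant factorization obstruction with a ray class group $G$ of $\mathcal O$. The hypothesis that every stably free left ideal is free is precisely what allows one to pass from stable isomorphism classes to genuine isomorphism classes, so that the target becomes $\mathcal B(G)$ with $G$ the ray class group rather than a coarser $K$-theoretic quotient. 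I would therefore invoke Smertnig's structure theorem for sets of lengths in maximal orders to obtain the transfer homomorphism, and then apply the preservation principle above.

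For the second assertion, $H$ is a commutative but non-Krull seminormal weakly Krull order, and the natural setting is the theory of weakly Krull monoids and their associated $T$-block monoids. Here I would localise at the height-one primes: the bijection $\mathfrak X(\widehat H) \to \mathfrak X(H)$ matches the localisations of $H$ with those of its principal (Krull) order $\widehat H$, while seminormality forces the local factorization behaviour at the primes dividing the conductor to be trivial from the viewpoint of lengths. Combined with the isomorphism $\overline{\vartheta} \colon \mathcal C_v(H) \to \mathcal C_v(\widehat H)$ of $v$-class groups, this yields a transfer homomorphism onto $\mathcal B(G)$ with $G = \mathcal C_v(H)$; the precise statement is due to Geroldinger, Kainrath and Reinhart \cite{Ge-Ka-Re15a}. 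Invoking preservation of sets of lengths once more gives $\mathcal U_k(H) = \mathcal U_k(G)$ for all $k$.

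The main obstacle in both parts is \emph{not} the factorization bookkeeping, which collapses to the transfer principle, but the construction of the transfer homomorphisms themselves. In the non-commutative setting this requires controlling factorizations through the reduced norm and the two-sided ideal structure, and the ``stably free implies free'' hypothesis is essential to obtain the ray class group as the exact target. In the seminormal setting the delicate point is to show that seminormality, together with the two stated hypotheses, suppresses exactly the conductor contributions that would otherwise obstruct a clean transfer to $\mathcal B(G)$. Since in the present paper we only cite these deep results, the proof reduces to the two citations combined with the transfer principle, which is why the authors describe it as straightforward relative to the inputs it consumes.
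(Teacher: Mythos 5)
Your proposal is correct and follows essentially the same route as the paper: the paper's entire proof consists of citing \cite[Theorem 1.1]{Sm13a} for Part 1 and \cite[Theorem 5.8]{Ge-Ka-Re15a} for Part 2, exactly the two results you invoke, with the transfer-homomorphism preservation principle (your preliminary paragraph) being the implicit mechanism in both citations. Your additional discussion of rigid factorizations, reduced norms, and the weakly Krull $T$-block machinery is accurate background on what those cited theorems contain, but it is not part of the paper's argument, which delegates all of it to the references.
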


\begin{proof}
1. See \cite[Theorem 1.1]{Sm13a}, and \cite{Ba-Sm15} for related results of this flavor.

2. See \cite[Theorem 5.8]{Ge-Ka-Re15a} for a more general result in the setting of weakly Krull monoids.
\end{proof}

\medskip
We need some more notation for sequences over abelian groups (it is consistent with \cite{Ge-HK06a, Ge-Ru09, Gr13a}). As before, we fix an additive abelian group $G$ and a subset $G_0 \subset G$. Let
\[
S = g_1 \cdot \ldots \cdot g_l = \prod_{g \in G_0} g^{\mathsf v_g
(S)} \in \mathcal F (G_0) \,,
\]
be a sequence over $G_0$ (whenever we write a sequence in this way, we tacitly assume that $l \in \N_0$ and $g_1, \ldots, g_l \in G_0$). We set $-S = (-g_1) \cdot \ldots \cdot (-g_l)$ and $\mathsf v_{G_1}(S) = \sum_{g \in G_1} \mathsf v_{g}(S)$ for a subset $G_1 \subset G_0$. We call $\mathsf v_g(S)$ the {\it multiplicity} of $g$ in $S$,
\[
\begin{aligned}
|S| & = l = \sum_{g \in G} \mathsf v_g (S) \in \mathbb N_0 \
\text{the \ {\it length} \ of \ $S$} \,,  \quad \supp (S) = \{g \in
G \mid \mathsf v_g (S) > 0 \} \subset G \ \text{the \ {\it support}
\ of \
$S$} \,, \\
\sigma (S) & = \sum_{i = 1}^l g_i \ \text{the \ {\it sum} \ of \
$S$} \,, \quad  \text{and} \quad \Sigma (S) = \Big\{ \sum_{i \in I} g_i
\mid \emptyset \ne I \subset [1,l] \Big\} \ \text{ the \ {\it set of
subsums} \ of \ $S$} \,.
\end{aligned}
\]
For a sequence $T\in \mathcal F(G_0)$, we write $\gcd(S,T)\in \Fc(G_0)$ for the maximal length subsequence dividing $S$ and $T$. We write $T\mid S$ to indicate that $T$ is a subsequence of $S$, in which case $ST^{-1} = T^{-1}S$ denotes the subsequence obtained from $S$ by removing the terms from $T$.
The sequence $S$ is said to be
\begin{itemize}
\item {\it zero-sum free} \ if \ $0 \notin \Sigma (S)$,

\item a {\it zero-sum sequence} \ if \ $\sigma (S) = 0$,

\item a {\it minimal zero-sum sequence}  \ if it is a nontrivial zero-sum sequence and every proper  subsequence is zero-sum free.
\end{itemize}
Clearly,  the minimal zero-sum sequences are precisely the atoms of the monoid $\mathcal B (G_0)$, and they play a central role in our investigations. Now suppose that $G$ is finite. For $n \in \mathbb N$, let $C_n$ denote a cyclic group with $n$
elements. If  $|G| > 1$, then we have
\[
G \cong C_{n_1} \oplus \ldots \oplus C_{n_r}  \,, \quad \text{and we
set} \quad \mathsf d^* (G) = \sum_{i=1}^r (n_i-1) \quad\und\quad\mathsf D^*(G)=\mathsf d^*(G)+1\,,
\]
where $r = \mathsf r (G) \in \mathbb N$ is the \ {\it rank} of
$G$, $n_1, \ldots , n_r \in \mathbb N$ are integers with  $1 < n_1
\mid \ldots \mid n_r$ and $n_r = \exp (G)$ is the exponent of $G$.
If $|G| = 1$, then $\mathsf r (G) = 0$, $\exp (G) = 1$, and $\mathsf d^* (G) = 0$.
The {\it Davenport constant} $\mathsf D (G)$  of $G$ is the maximal length of a minimal zero-sum sequence over $G$, thus
\[
\mathsf  D (G) = \max \bigl\{ |U| \, \bigm| \; U \in \mathcal A
(G) \bigr\} \in \N \,.
\]
(note that $\mathcal A (G)$ is finite). In other words, $\mathsf D (G)$ is the smallest integer $\ell$ such that every sequence $S$ over $G$ has a nontrivial zero-sum subsequence. We denote by $\mathsf d (G)$ the maximal length of a zero-sum free sequence, and clearly we have $1+\mathsf d (G)=\mathsf D (G)$. The next proposition gathers some facts on the Davenport constant which we will use without further mention.

\begin{proposition} \label{2.3}
Let $G$ be a finite abelian group.
\begin{enumerate}
\item $\mathsf D^* (G) \le \mathsf D (G) \le |G|$.

\smallskip
\item If $G$ is a $p$-group or $\mathsf r (G) \le 2$, then $\mathsf D^* (G) = \mathsf D (G)$.

\smallskip
\item $\mathsf D (G)=1$ if and only if $|G|=1$, $\mathsf D (G)=2$ if and only if $|G|=2$, and $\mathsf D (G)=3$ if and only if $G$ is cyclic of order $|G|=3$ or isomorphic to $C_2 \oplus C_2$.
\end{enumerate}
\end{proposition}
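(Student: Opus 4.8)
The plan is to handle the three assertions separately, using throughout the identity $\mathsf D (G) = 1 + \mathsf d (G)$, where $\mathsf d (G)$ is the maximal length of a zero-sum free sequence over $G$.

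For part (1), I would prove the two bounds independently. For the upper bound $\mathsf D (G) \le |G|$, take any sequence $S = g_1 \cdot \ldots \cdot g_l$ with $l \ge |G|$ and consider the $l+1 \ge |G|+1$ partial sums $\sigma (g_1 \cdot \ldots \cdot g_j)$ for $0 \le j \le l$ (the empty sum being $0$); by the pigeonhole principle two of them coincide, say for indices $j < k$, and then $g_{j+1} \cdot \ldots \cdot g_k$ is a nontrivial zero-sum subsequence. Hence no zero-sum free sequence can have length $\ge |G|$, i.e.\ $\mathsf d (G) \le |G| - 1$. For the lower bound $\mathsf D^* (G) \le \mathsf D (G)$, I would fix a basis $(e_1, \ldots, e_r)$ with $\ord (e_i) = n_i$ and verify that $S = e_1^{n_1-1} \cdot \ldots \cdot e_r^{n_r-1}$ is zero-sum free: a nonempty subsum has the form $\sum_{i} a_i e_i$ with $0 \le a_i \le n_i - 1$ and not all $a_i$ zero, and since $(e_i)$ is a basis this vanishes only when every $a_i = 0$. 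As $|S| = \sum_i (n_i - 1) = \mathsf d^* (G)$, this yields $\mathsf d^* (G) \le \mathsf d (G)$, equivalently $\mathsf D^* (G) \le \mathsf D (G)$.

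Part (2) is the substantial input, and the hard part: the equalities $\mathsf D (G) = \mathsf D^* (G)$ for $p$-groups and for groups of rank at most two are the classical theorems of Olson, whose proofs (a group-ring / polynomial argument in the $p$-group case, and a separate combinatorial argument in the rank-two case) go well beyond the elementary level of the rest of the proposition. I would not reprove these but simply invoke them, citing \cite{Ge-HK06a}.

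Part (3) I would derive from part (1), supplemented by part (2) in one place. For the ``if'' directions I would compute $\mathsf D^*$ directly, namely $\mathsf D^* (C_1) = 1$, $\mathsf D^* (C_2) = 2$, $\mathsf D^* (C_3) = 3$, and $\mathsf D^* (C_2 \oplus C_2) = 3$; combined with $\mathsf D^* (G) \le \mathsf D (G) \le |G|$ from part (1), this determines $\mathsf D (G)$ exactly in the first three cases, while for $C_2 \oplus C_2$ the bound only gives $3 \le \mathsf D (G) \le 4$, so here I would invoke $\mathsf D = \mathsf D^* = 3$ from part (2) (a short direct check that no minimal zero-sum sequence of length $4$ exists over $C_2 \oplus C_2$ would also suffice). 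For the ``only if'' directions I would use $\mathsf D^* (G) \le \mathsf D (G)$ to control $\mathsf d^* (G) = \sum_i (n_i - 1)$: if $\mathsf D (G) = 1$ then $\mathsf d^* (G) = 0$, forcing $G$ trivial; if $\mathsf D (G) = 2$ then $\mathsf d^* (G) \le 1$ with $G$ nontrivial, forcing $G \cong C_2$; and if $\mathsf D (G) = 3$ then $\sum_i (n_i - 1) \le 2$, so $\mathsf r (G) \le 2$, and among the remaining possibilities only $n_1 = 3$ (giving $C_3$) and $n_1 = n_2 = 2$ (giving $C_2 \oplus C_2$) yield $\mathsf D (G) = 3$.
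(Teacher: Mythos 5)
Your proposal is correct and follows essentially the same route as the paper: the paper's proof simply cites \cite[Chapter 5]{Ge-HK06a}, remarking that part 1 is elementary and part 3 is a simple consequence of parts 1 and 2, which is exactly the outline you fill in (elementary pigeonhole and basis arguments for part 1, citation of Olson's theorems for part 2, and the small-group case analysis deducing part 3 from the first two parts).
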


\begin{proof}
See \cite[Chapter 5]{Ge-HK06a}. Note that 1. is elementary and that 3. is a simple consequence of 1. and 2. There are more groups $G$ with $\mathsf D^* (G)=\mathsf D (G)$ (beyond the ones listed in 2.), but we do not have equality in general  (\cite{Ge-Li-Ph12, Sm10a}).
\end{proof}

\smallskip
The next proposition gathers the state of the art on unions of sets of lengths.

\medskip
\begin{proposition} \label{2.4}
Let  $H$ be a Krull monoid with  class group  $G$  such that every class contains a prime divisor.
\begin{enumerate}
\item If \ $|G|\le 2$, then $\mathcal U_k (H)= \{k\}$ for all $k \in \N$.

\smallskip
\item If  \ $2 < |G| < \infty$, then, for all $k \in \N$, we have $\mathcal U_k (H) = [\lambda_k (G), \rho_k (G)]$ and
      \[
      \lambda_{k \mathsf D (G) + j} (H) = \begin{cases}
                                          2k \quad  & \text{for} \quad j = 0 \\
                                          2k+1 \quad  & \text{for} \quad j \in [1, \rho_{2k+1}(G) - k \mathsf D (G)]  \\
                                          2k+2 \quad  & \text{for} \quad j \in [\rho_{2k+1}(G) - k \mathsf D (G)+1, \mathsf D (G) - 1]  \,,
                                          \end{cases}
\]
provided that $k \mathsf D (G) + j \ge  1$.

\smallskip
\item If \ $G$ is infinite, then $\mathcal U_k (H) = \N_{\ge 2}$ for all $k \ge 2$.
\end{enumerate}
\end{proposition}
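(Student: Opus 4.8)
The plan is to reduce everything to the monoid of zero-sum sequences and then split according to $|G|$. By Proposition \ref{2.1} we have $\mathcal U_k(H) = \mathcal U_k(G)$, $\lambda_k(H) = \lambda_k(G)$ and $\rho_k(H) = \rho_k(G)$, so I may assume $H = \mathcal B(G)$ and work only with $G$. For Part 1, recall from Section \ref{2} that $\mathcal B(G)$ is factorial precisely when $|G| \le 2$; in a factorial monoid factorizations are unique, so every set of lengths is a singleton and $\mathcal U_k(G) = \{k\}$ follows at once.

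For Part 3 ($G$ infinite) I would invoke Kainrath's realization theorem (\cite[Theorem 7.4.1]{Ge-HK06a}), which states that over an infinite abelian group every finite nonempty subset $L \subseteq \N_{\ge 2}$ is a set of lengths. Given $k \ge 2$ and arbitrary $m \ge 2$, applying this to $L = \{k, m\}$ produces $B \in \mathcal B(G)$ with $\{k, m\} \subseteq \mathsf L(B)$, so that $m \in \mathcal U_k(G)$; hence $\N_{\ge 2} \subseteq \mathcal U_k(G)$. The reverse inclusion is immediate, since $1 \notin \mathcal U_k(G)$ because an atom cannot equal a product of $k \ge 2$ atoms. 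Therefore $\mathcal U_k(G) = \N_{\ge 2}$.

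For Part 2 ($2 < |G| < \infty$) finiteness of $\mathcal U_k(G)$ is clear, since over a finite group sets of lengths are finite and uniformly bounded. The two substantial inputs I would quote from the predecessor paper (\cite{Fr-Ge08}; see also \cite[Chapter 6]{Ge-HK06a}) are: (i) each $\mathcal U_k(G)$ is an interval, whence $\mathcal U_k(G) = [\lambda_k(G), \rho_k(G)]$; and (ii) the elasticity data $\rho_{2k}(G) = k\mathsf D(G)$ together with $k\mathsf D(G) + 1 \le \rho_{2k+1}(G) \le k\mathsf D(G) + \lfloor \mathsf D(G)/2 \rfloor$.

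It then remains to read off $\lambda$ from $\rho$. The definition of $\mathcal U_k$ is symmetric, $m \in \mathcal U_k(G) \Leftrightarrow k \in \mathcal U_m(G)$, so by the interval property
\[
\lambda_k(G) = \min \{ m \in \N : k \in [\lambda_m(G), \rho_m(G)] \} = \min \{ m \in \N : \rho_m(G) \ge k \},
\]
the second equality holding because the minimizing $m$ satisfies $m \le k$ (as $\rho_k(G) \ge k$) and hence $\lambda_m(G) \le m \le k$ automatically. Substituting $k\mathsf D(G) + j$ with $0 \le j \le \mathsf D(G) - 1$ and using the $\rho$-values gives the three cases: $j = 0$ yields $\lambda = 2k$ from $\rho_{2k} = k\mathsf D(G)$; for $1 \le j \le \rho_{2k+1}(G) - k\mathsf D(G)$ one has $\rho_{2k} < k\mathsf D(G) + j \le \rho_{2k+1}$, giving $\lambda = 2k+1$; and for $\rho_{2k+1}(G) - k\mathsf D(G) < j \le \mathsf D(G) - 1$ one has $\rho_{2k+1} < k\mathsf D(G) + j \le (k+1)\mathsf D(G) = \rho_{2k+2}$, giving $\lambda = 2k+2$. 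The inequality $\rho_{2k+1}(G) - k\mathsf D(G) \le \lfloor \mathsf D(G)/2 \rfloor \le \mathsf D(G) - 1$ guarantees that these ranges partition $[0, \mathsf D(G) - 1]$. The real work sits entirely in inputs (i), (ii) and in Kainrath's theorem: proving that unions are intervals and determining $\rho_{2k}$ and the size of $\rho_{2k+1}$ is the genuinely additive-combinatorial part, whereas the reduction, Part 1, and the passage from $\rho$ to $\lambda$ are bookkeeping.
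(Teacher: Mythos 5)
Your proposal is correct and rests on exactly the same pillars the paper invokes: the paper's proof of Proposition \ref{2.4} is a pure citation (the interval/structure theorem of Freeze--Geroldinger \cite{Fr-Ge08} together with \cite{Ge09a} for Part 2, and Kainrath's realization theorem \cite[Theorem 7.4.1]{Ge-HK06a} for Part 3), and you quote precisely these results, supplying in addition the transfer reduction via Proposition \ref{2.1}, the factoriality argument for Part 1, and the correct bookkeeping $\lambda_k(G)=\min\{m\in\N : \rho_m(G)\ge k\}$ that recovers the three-case formula from $\rho_{2k}(G)=k\mathsf D(G)$ and the bounds on $\rho_{2k+1}(G)$. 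In short, you have reconstructed in detail what the paper delegates to the literature, with no gap in the derivation.
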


\begin{proof}
1. is classical, for 2. see \cite[Theorem 4.1]{Fr-Ge08} and \cite[Section 3.1]{Ge09a}, and 3. follows from \cite[Theorem 7.4.1]{Ge-HK06a}.
\end{proof}

\medskip
Let $H$ be a Krull monoid with  class group $G$ such that every class contains a prime divisor, or any of the monoids in Proposition \ref{2.2}.
Then Propositions \ref{2.1}, \ref{2.2},  and \ref{2.4}  show that, for a complete description of the sets $\mathcal U_k (H)$ of $H$, it remains to study
the invariants $\rho_{k}  (G )$ of an associated monoid of zero-sum sequences. This is the goal of the present paper.

\bigskip
\section{The extremal cases in the crucial inequality} \label{3}
\bigskip

We start with a simple and well-known lemma. For convenience, we provide its short proof.

\medskip
\begin{lemma} \label{3.1}
Let $G$ be a finite abelian group with $|G| \ge 3$, and let $k, l \in \mathbb N$.
\begin{enumerate}
\item $k + l \le \rho_k (G) + \rho_l (G) \le \rho_{k+l} (G)$.

\smallskip
\item $\rho_{2k} (G) = k \mathsf D (G)$ \ and
      \begin{equation} \label{crucialinequality}
      1 + k \mathsf D (G) \le \rho_{2k+1} (G) \le k \mathsf D (G) + \Blfloor  \frac{\mathsf D (G)}{2} \Brfloor \,.
      \end{equation}
      In particular, if $\mathsf D (G)=3$, then $\rho_{2k+1}(G)=k \mathsf D (G)+1$.

\smallskip
\item If $\rho_{2k+1} (G) \ge  m$ for some $m \in \N$ and $l \ge k$, then $\rho_{2l+1} (G) \geq m +(l-k)\mathsf D(G)$.
\end{enumerate}
\end{lemma}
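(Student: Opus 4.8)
The plan is to obtain part~3 as an immediate formal consequence of parts~1 and~2, which I am free to use since they are established earlier in the same lemma. The mechanism is that part~1 provides superadditivity of the sequence $\bigl(\rho_n(G)\bigr)_{n\ge 1}$ in the form $\rho_a(G)+\rho_b(G)\le\rho_{a+b}(G)$, while part~2 supplies the exact value $\rho_{2j}(G)=j\,\mathsf D(G)$ at even indices. Combining a lower bound assumed at the index $2k+1$ with the exact value at a suitable even index will produce the asserted lower bound at $2l+1$.

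First I would split off the degenerate case $l=k$. Here the claim $\rho_{2l+1}(G)\ge m+(l-k)\mathsf D(G)$ collapses to $\rho_{2k+1}(G)\ge m$, which is precisely the hypothesis, so there is nothing to prove.

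For $l>k$ I set $j=l-k\ge 1$ and use the decomposition $2l+1=(2k+1)+2j$. Applying the right-hand inequality of part~1 with $a=2k+1$ and $b=2j$ gives
\[
\rho_{2l+1}(G)\ge\rho_{2k+1}(G)+\rho_{2j}(G).
\]
Substituting the hypothesis $\rho_{2k+1}(G)\ge m$ and the even-index formula $\rho_{2j}(G)=j\,\mathsf D(G)=(l-k)\mathsf D(G)$ from part~2 then yields $\rho_{2l+1}(G)\ge m+(l-k)\mathsf D(G)$, as required.

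I do not anticipate a genuine obstacle in this argument: all the arithmetic substance is already carried by parts~1 and~2 (in particular, by the explicit factorizations underlying $\rho_{2k}(G)=k\,\mathsf D(G)$ and by the crucial inequality~\eqref{crucialinequality}), and part~3 is essentially bookkeeping on top of these. The only point demanding care is that the formula $\rho_{2j}(G)=j\,\mathsf D(G)$ is valid only for $j\ge 1$; this is exactly why the case $l=k$ must be handled separately rather than by formally invoking an index~$0$.
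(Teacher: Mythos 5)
Your argument for part 3 is correct, and it is in fact exactly the paper's own proof of that part: the paper likewise writes $\rho_{2l+1}(G) \ge \rho_{2k+1}(G) + \rho_{2(l-k)}(G) \ge m + (l-k)\mathsf D(G)$, invoking parts 1 and 2 (your explicit splitting off of the degenerate case $l=k$ is a small point of extra care that the paper leaves implicit). The genuine gap is elsewhere: the statement to be proved is the entire three-part lemma, and parts 1 and 2 are \emph{not} ``established earlier in the same lemma''---they are constituents of the very statement under proof. Your proposal never proves them, yet they carry all of the mathematical substance; part 3 is, as you say yourself, only bookkeeping on top of them.

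Concretely, what is missing is the following. For part 1: the trivial bound $\rho_m(G)\ge m$ gives the left inequality, and the inclusion $\mathcal U_k(G)+\mathcal U_l(G)\subset \mathcal U_{k+l}(G)$, upon taking suprema, gives $\rho_k(G)+\rho_l(G)\le \rho_{k+l}(G)$. For part 2: a counting argument (comparing $\sum_i |V_i| = \sum_j |U_j| \le k\,\mathsf D(G)$ with the fact that the $V_i$ have length at least $2$ away from the prime element $0$) yields $\rho_k(G)\le k\,\mathsf D(G)/2$, while the explicit construction $(-U)^kU^k=\prod_{i=1}^{\mathsf D(G)}\bigl((-g_i)g_i\bigr)^k$ for an atom $U=g_1\cdot\ldots\cdot g_{\mathsf D(G)}$ of maximal length shows $\rho_{2k}(G)\ge k\,\mathsf D(G)$, whence $\rho_{2k}(G)=k\,\mathsf D(G)$; then $1+k\,\mathsf D(G)=\rho_1(G)+\rho_{2k}(G)\le\rho_{2k+1}(G)\le (2k+1)\mathsf D(G)/2$, and integrality turns the upper bound into $k\,\mathsf D(G)+\lfloor \mathsf D(G)/2\rfloor$, with both bounds coinciding when $\mathsf D(G)=3$. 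Without these arguments, your proposal establishes only the formal implication $(1)\wedge(2)\Rightarrow(3)$, not the lemma itself.
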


\begin{proof}
1. By definition, we have $\rho_m (G) \ge m$ for each $m \in \N$ and hence $k + l \le \rho_k (G) + \rho_l (G)$. Since $\mathcal U_k (G) + \mathcal U_l (G) \subset \mathcal U_{k+l}(G)$, it follows that
\[
\rho_k (G) + \rho_l (G) = \sup \mathcal U_k (G) + \sup \mathcal U_l (G) \le \sup \mathcal U_{k+l}(G) =  \rho_{k+l} (G) \,.
\]

\smallskip
2. A simple counting argument shows that $\rho_k (G) \le k \mathsf D (G)/2$; furthermore, if $U = g_1 \cdot \ldots \cdot g_{\mathsf D (G)} \in \mathcal A (G)$, then $(-U)^kU^k= \prod_{i=1}^{\mathsf D (G)} \big( (-g_i)g_i \big)^k$, whence $k \mathsf D (G) \le \rho_{2k}(G)$ and thus $\rho_{2k}(G) = k \mathsf D (G)$ (details can be found in \cite[Theorem 2.3.1]{Ge09a}). Using this and 1., we infer that
\[
1+ \mathsf k \mathsf D (G) = \rho_1(G) + \rho_{2k}(G) \le \rho_{2k+1}(G) \le \frac{(2k+1)\mathsf D (G)}{2} \,.
\]
Clearly, $\mathsf D (G)=3$ implies that equality holds in both inequalities above.

\smallskip
3. By 1. and 2., it follows that
\[
\rho_{2l+1}(G) \ge \rho_{2k+1}(G) + \rho_{2(l-k)}(G) \ge m +(l-k)\mathsf D(G) \,. \qedhere
\]
\end{proof}

\smallskip
Our starting point is the crucial inequality (\ref{crucialinequality}). We conjecture that cyclic groups are the only groups where equality holds on the left hand side, whereas, for all noncyclic groups, there is a $k^* \in \N$ such that equality holds on the right hand side for all $k \ge k^*$. We are going to outline this in greater detail (see Conjecture \ref{3.3} and Corollary \ref{3.4}).

\medskip
\begin{proposition} \label{3.2}
Let $G$ be a finite abelian group with $\mathsf D (G) \ge 4$.
\begin{enumerate}
\item If there exist $U \in \mathcal A (G)$ and $S_1, S_2 \in \mathcal F (G)$ such that
                    \[
                    U = S_1S_2 \,, \ |U| = \mathsf D (G) \quad \text{and} \quad \Sigma (S_1) \cup \Sigma (-S_2) \ne G \setminus \{0\} \,,
                    \]
      then $\rho_3 (G) > \mathsf D (G)+1$.

\smallskip
\item If $G$ is cyclic, then the property in 1. does not hold and $\rho_{2k+1} (G) =k \mathsf D (G)+1$ for each $k \in \N$.

\smallskip
\item The following conditions are equivalent{\rm \,:}
      \begin{enumerate}
      \item There is a $k^* \in \N$ such that $\rho_{2k^*+1} (G) = k^* \mathsf D (G) + \Blfloor \frac{\mathsf  D (G)}{2} \Brfloor$.

      \item There is a $k^* \in \N$ such that
            \[
            \rho_{2k+1} (G) = k \mathsf D (G) + \Blfloor \frac{\mathsf  D (G)}{2} \Brfloor \qquad \text{for every} \ k \ge k^* \,.
            \]
      \end{enumerate}
\end{enumerate}
\end{proposition}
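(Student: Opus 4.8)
For part 1, the plan is to exhibit three explicit atoms whose product \emph{also} factors into $\mathsf{D}(G)+2$ atoms of length two. By hypothesis there is a $g \in G \setminus \{0\}$ with $g \notin \Sigma(S_1)$ and $g \notin \Sigma(-S_2)$, the latter being equivalent to $-g \notin \Sigma(S_2)$. I would set $w = \sigma(S_1) - g$ and note $w \ne 0$, since $w = 0$ would force $g = \sigma(S_1) \in \Sigma(S_1)$. Then define
\[
U = S_1S_2\,, \quad B_1 = (-S_1)\cdot g \cdot w \,, \quad B_2 = (-S_2)\cdot(-g)\cdot(-w) \,.
\]
Both $B_1,B_2$ are zero-sum: $\sigma(B_1) = -\sigma(S_1)+g+w = 0$, and since $\sigma(S_2) = -\sigma(S_1)$ also $\sigma(B_2)=0$. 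The key observation is that the product $UB_1B_2$ has underlying terms $S_1(-S_1)\,S_2(-S_2)\,g(-g)\,w(-w)$, all nonzero, so it splits into the length-two atoms $h(-h)$, one for each term $h$ of $S_1$ and of $S_2$, together with $g(-g)$ and $w(-w)$, giving $|S_1|+|S_2|+2 = \mathsf{D}(G)+2$ atoms. As $U,B_1,B_2$ are themselves atoms (next paragraph), this puts $\mathsf{D}(G)+2 \in \mathsf L(UB_1B_2) \subset \mathcal{U}_3(G)$, whence $\rho_3(G) \ge \mathsf{D}(G)+2 > \mathsf{D}(G)+1$.

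The remaining point is that $B_1$ and $B_2$ are atoms, and here I would use that $S_1,S_2$ are subsequences of the atom $U$, so no nonempty subsequence of $S_1$ or $S_2$ is zero-sum. For $B_1 = (-S_1)\,g\,w$ I would run through the possible proper zero-sum subsequences: one avoiding both $g$ and $w$ corresponds to a subsum of $S_1$ and cannot vanish; one using $g$ but not $w$ equals $g - \sigma(C)$ for a subsequence $C$ of $S_1$, vanishing only if $g \in \Sigma(S_1)$; one using $w$ but not $g$ equals $\sigma(C') - g$ for a subsequence $C'$ of $S_1$, again forcing $g \in \Sigma(S_1)$; and one using both $g$ and $w$ reduces to a subsum of $S_1$, zero only for the full $B_1$. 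Thus $g \notin \Sigma(S_1)$ makes $B_1$ minimal, and symmetrically $-g \notin \Sigma(S_2)$ makes $B_2$ minimal. The degenerate cases must be cleared first: if $S_1 = \emptyset$ then $g \notin \Sigma(-U)$, and replacing $S_1$ by a single term $h \mid U$ with $h \ne g$ (such $h$ exists, for otherwise $U = g^{|U|}$ and $g \in \Sigma(-U)$) produces a nondegenerate witness for the same $g$, since $\Sigma(-S_2) \subseteq \Sigma(-U)$; the case $S_2 = \emptyset$ is symmetric.

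For part 2, write $C_n$ as $\Z/n\Z$ and recall the classical description of the longest minimal zero-sum sequences over $C_n$ (see \cite{Ge-HK06a}): every $U \in \mathcal{A}(C_n)$ with $|U| = \mathsf{D}(C_n) = n$ has the form $U = g^n$ for a generator $g$. For any factorization $U = S_1S_2$ one then has $S_1 = g^a$, $S_2 = g^{n-a}$, and taking $g = 1$ gives $\Sigma(S_1) = \{1,\dots,a\}$ and $\Sigma(-S_2) = \{a,a+1,\dots,n-1\}$, whose union is exactly $G \setminus \{0\}$; hence the property in part 1 fails for every such $U$. For the value $\rho_{2k+1}(C_n) = kn+1$, the lower bound $kn+1$ is Lemma \ref{3.1}.2. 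The reverse inequality is the genuinely hard point, and I expect it to be the main obstacle. The natural tool is the index estimate $\max \mathsf L(W) \le \min(\|W\|, |W|-\|W\|)$, where $\|W\|$ is the normalized lift-sum relative to the generator $1$; applied to a $(2k{+}1)$-atom representation this only yields the general bound $kn + \Blfloor n/2 \Brfloor$ of \eqref{crucialinequality}, because a \emph{balanced} choice of the long atoms would saturate it. Sharpening this to $kn+1$ requires showing that such a balanced, symmetric configuration is incompatible with the \emph{odd} number $2k+1$ of full-length atoms on the long side; making that incompatibility quantitative is the crux, and barring a self-contained argument I would invoke the known determination of $\rho_k(C_n)$ for cyclic groups.

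Part 3 is then immediate. The implication $(b)\Rightarrow(a)$ is trivial. For $(a)\Rightarrow(b)$, assume $\rho_{2k^*+1}(G) = k^*\mathsf{D}(G) + \Blfloor \mathsf{D}(G)/2 \Brfloor$. Applying Lemma \ref{3.1}.3 with $m = k^*\mathsf{D}(G) + \Blfloor \mathsf{D}(G)/2 \Brfloor$ yields, for every $l \ge k^*$,
\[
\rho_{2l+1}(G) \ge k^*\mathsf{D}(G) + \Blfloor \mathsf{D}(G)/2 \Brfloor + (l-k^*)\mathsf{D}(G) = l\mathsf{D}(G) + \Blfloor \mathsf{D}(G)/2 \Brfloor \,,
\]
while \eqref{crucialinequality} provides the matching upper bound $\rho_{2l+1}(G) \le l\mathsf{D}(G) + \Blfloor \mathsf{D}(G)/2 \Brfloor$. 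Hence equality holds for all $l \ge k^*$, as required.
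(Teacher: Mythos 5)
Your proposal is correct and follows essentially the same route as the paper: in part 1 your $B_1,B_2$ are exactly the paper's atoms $U_2=(-S_1)g(\sigma(S_1)-g)$ and $U_3=(-S_2)(-g)(\sigma(S_2)+g)$ with the same length-two refactorization of $UB_1B_2$; in part 2 you argue, as the paper does, from the form $U=g^n$ of maximal-length atoms over cyclic groups and invoke the known value of $\rho_{2k+1}(C_n)$ from the literature (the paper cites Gao--Geroldinger for this); and in part 3 you combine the growth estimate of Lemma \ref{3.1} with the upper bound in \eqref{crucialinequality}, which is precisely the paper's argument packaged through Lemma \ref{3.1}.3. Your additional treatment of the degenerate cases $S_1=\emptyset$ or $S_2=\emptyset$ and the explicit minimality check for $B_1,B_2$ merely spell out details the paper leaves implicit.
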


\begin{proof}
1. Let  $S_1, S_2$, and $U$ have the above property. Then we choose an element
\[
g \in G \setminus \bigl(  \Sigma (S_1) \cup \Sigma (-S_2) \cup \{0\}    \bigr) \,.
\]
Since $g \notin  \Sigma (S_1)$, the sequence $(-S_1)g$ is zero-sum free and $U_2 = (-S_1)g \bigl(\sigma (S_1)-g \bigr) \in \mathcal A (G)$.
Similarly, it follows that $U_3 = (-S_2)(-g) \bigl( \sigma (S_2)+g \bigr) \in \mathcal A (G)$. Since $-\bigl(\sigma (S_1)-g \bigr) = \sigma (S_2)+g$ in view of $0=\sigma(U)=\sigma(S_1)+\sigma(S_2)$,
the product $U U_2U_3$ has a factorization into $|S_1| + |S_2| + 2 = \mathsf D (G) + 2$ atoms of length $2$.

\smallskip
2. Suppose that $G$ is cyclic of order $|G|=n$. Then \cite[Theorem 5.3]{Ga-Ge09b}  implies that $\rho_{2k+1} (G)=k\mathsf D (G)+1$ for each $k \in \N$ (see \cite[Theorem 5.3.1]{Ge09a}
 for a slightly modified proof). Clearly, every $U \in \mathcal A (G)$ of length $|U|=|G|$ has the form $U = g^n$ for some $g \in G$ with $\ord (g)=n$. Thus there are no $S_1$ and  $S_2$ with the given properties.

\smallskip
3. (a)\, $\Rightarrow$\, (b) \ If $l \in \N$, then  Lemma \ref{3.1} implies that
\[
\begin{aligned}
(k^*+l)\mathsf D (G) + \Blfloor \frac{\mathsf  D (G)}{2} \Brfloor & \ge \rho_{2(k^*+l)+1} (G)  \ge \rho_{2k^*+1}(G) + \rho_{2l}(G) \\
 & = \left(  k^* \mathsf D (G)  + \Blfloor \frac{\mathsf  D (G)}{2} \Brfloor \right) + l \mathsf D (G) = (k^*+l)\mathsf D (G) + \Blfloor \frac{\mathsf  D (G)}{2} \Brfloor \,.
\end{aligned}
\]

(b)\, $\Rightarrow$\, (a) \ Obvious.
\end{proof}

\medskip
\begin{conjecture} \label{3.3}
Let $G$ be a  noncyclic finite abelian group  with $\mathsf D (G) \ge 4$. Then the following two conditions hold{\rm \,:}
\begin{enumerate}
\item[{\bf C1.}] There exist $U \in \mathcal A (G)$ and $S_1, S_2 \in \mathcal F (G)$ such that
                    \[
                    U = S_1S_2 \,, \ |U| = \mathsf D (G) \,, \quad \text{and} \quad \Sigma (S_1) \cup \Sigma (-S_2) \ne G \setminus \{0\} \,.
                    \]

\smallskip
\item[{\bf C2.}] There exists some $k^* \in \N$ such that
            \[
            \rho_{2k+1} (G) = k \mathsf D (G) + \Blfloor \frac{\mathsf  D (G)}{2} \Brfloor \qquad \text{for each } \quad k \ge k^* \,.
            \]
\end{enumerate}
\end{conjecture}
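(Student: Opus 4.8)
The two assertions have different flavors: \textbf{C1} is a purely combinatorial statement about a single extremal atom, while \textbf{C2} concerns the growth of $\rho_{2k+1}(G)$. The first simplification is that, by Proposition \ref{3.2}(3), for \textbf{C2} it suffices to produce a \emph{single} $k^*$ with $\rho_{2k^*+1}(G) = k^*\mathsf D(G)+\lfloor \mathsf D(G)/2\rfloor$; the form ``for all $k\ge k^*$'' then follows automatically from Lemma \ref{3.1}. It is tempting to hope that \textbf{C1} already delivers \textbf{C2}: by Proposition \ref{3.2}(1) it gives $\rho_3(G)\ge \mathsf D(G)+2$, which meets the upper bound of (\ref{crucialinequality}) exactly when $\mathsf D(G)\in\{4,5\}$, settling \textbf{C2} with $k^*=1$ in those cases. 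For $\mathsf D(G)\ge 6$ a genuine gap opens and \textbf{C1} is no longer enough, since the gadget of Proposition \ref{3.2}(1) uses three atoms of total length only $2\mathsf D(G)+4$ and so can never yield more than $\mathsf D(G)+2$ atoms of length two. Reaching the true maximum forces $2k+1$ atoms of length as close to $\mathsf D(G)$ as possible, which suggests the right reformulation: since every atom has length $\ge 2$, a product $B$ of $2k+1$ atoms factors into at most $\lfloor |B|/2\rfloor\le \lfloor (2k+1)\mathsf D(G)/2\rfloor$ atoms, and equality demands that all but at most one of the atoms have length $\mathsf D(G)$ and that $B$ split into the maximal number of length-two atoms. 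Equivalently, \emph{up to removing one admissible zero-sum triple, the multiset of terms of $B$ is invariant under $g\mapsto -g$ and decomposes into pairs $g\,(-g)$}. So \textbf{C2} becomes: construct $2k^*+1$ near-maximal atoms with $\pm$-pairable product.

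For \textbf{C1} the plan is to exhibit an explicit extremal atom $U$ with $|U|=\mathsf D(G)$ and a splitting $U=S_1S_2$ for which $\Sigma(S_1)\cup\Sigma(-S_2)$ provably omits an interior element. A clean mechanism is the subgroup bound $\Sigma(S_1)\cup\Sigma(-S_2)\subseteq\big(\langle\supp(S_1)\rangle\cup\langle\supp(S_2)\rangle\big)\setminus\{0\}$: if $U$ splits so that neither support generates $G$, then, as a finite group is never the union of two proper subgroups, \textbf{C1} follows at once. This is not universally available---already for $U=e_1^{n-1}e_2^{n-1}(e_1+e_2)$ over $C_n\oplus C_n$ any two distinct support elements generate $G$---so in the remaining cases I would estimate the subsum sets directly. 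For instance, splitting that same $U$ as $S_1=e_1^{n-1}$, $S_2=e_2^{n-1}(e_1+e_2)$, a short computation confines $\Sigma(S_1)$ and $\Sigma(-S_2)$ to the two coordinate axes together with a single coset line, so every ``generic'' $ae_1+be_2$ with $a\in[1,n-2]$, $b\in[1,n-1]$ is missed once $n\ge 3$. The general strategy feeds in the known shape of extremal atoms, which is unproblematic in the model range $\mathsf D(G)=\mathsf D^*(G)$ (in particular for rank two and for $p$-groups).

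The core of the program is \textbf{C2}, where the decisive difficulty is a \emph{symmetry obstruction}: a $\pm$-pairable product that is a product of an \emph{odd} number of long atoms cannot be arranged as a symmetric reservoir $U^k(-U)^k$ together with one further atom $W$, because balancing would force $W=-W$, impossible for an atom of length $>2$ not supported on elements of order two. One is therefore compelled to use \emph{entangled} atoms sharing terms, exactly as in Proposition \ref{3.2}(1), whose product is visibly a product of pairs $s\,(-s)$ and of $(\sigma(S_1)-g)(\sigma(S_2)+g)$. My plan is to build one \emph{core gadget} from a bounded odd number of entangled near-maximal atoms that produces the extra $\lfloor \mathsf D(G)/2\rfloor$ atoms, and then append $\pm$-symmetric reservoir pairs $U\,(-U)$ to reach any larger $k$ at no loss, as licensed by Lemma \ref{3.1} and Proposition \ref{3.2}(3). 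Constructing this gadget is where the classification of minimal zero-sum sequences of maximal length in groups of rank two (Main Proposition \ref{5.4}) enters: it supplies enough explicit extremal atoms $U$, and enough control over their subsum structure, to realize the required pairing, and this is how I expect the rank-two instance (the forthcoming Theorem \ref{5.1}) to be obtained.

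The main obstacle---and the reason \textbf{C1}/\textbf{C2} remain conjectural in general---is precisely the lack of such structural information outside rank two. For groups with $\mathsf D(G)>\mathsf D^*(G)$ one has neither an explicit family of maximal atoms nor a description of their subsum sets, so both the splitting in \textbf{C1} and the entangled gadget in \textbf{C2} would have to be produced by entirely different means. Verifying that the sequences one writes down are genuinely atoms of length exactly $\mathsf D(G)$, and that the resulting count is exact rather than merely asymptotic, is the delicate part that I expect to absorb most of the effort.
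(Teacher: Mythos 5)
The statement you were given is a \emph{conjecture}: the paper contains no proof of it, only partial supporting evidence (Proposition \ref{3.5} verifies {\bf C1} for groups with $\mathsf D(G)=\mathsf D^*(G)$ and for $C_2^4\oplus C_{2n}$ with $n \ge 70$, while Theorem \ref{4.1} and Corollaries \ref{5.2} and \ref{5.3} verify {\bf C2} for special classes of groups). Your proposal is accordingly not a proof but a program, and you say so explicitly in your last paragraph, so there is no hidden gap to report beyond the one inherent in the problem itself. Judged as a program, it aligns well with what the paper actually accomplishes: the reduction of {\bf C2} to exhibiting a \emph{single} $k^*$ via Proposition \ref{3.2}.3 is exactly the paper's mechanism; your observation that {\bf C1} already forces {\bf C2} with $k^*=1$ whenever $\mathsf D(G)\in\{4,5\}$ is correct (then $\lfloor \mathsf D(G)/2\rfloor=2$, so the lower bound $\rho_3(G)\ge \mathsf D(G)+2$ from Proposition \ref{3.2}.1 meets the upper bound of (\ref{crucialinequality})); and your plan for {\bf C1}---split an explicit extremal atom so that both subsum sets miss a common nonzero element, exploiting the known shape of extremal atoms when $\mathsf D(G)=\mathsf D^*(G)$---is precisely what Proposition \ref{3.5}.2 does. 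Likewise, your ``entangled core gadget plus $\pm$-symmetric reservoir'' picture for {\bf C2} matches the proof of Theorem \ref{4.1}: Lemmas \ref{4.4}, \ref{4.5} and \ref{4.6} build the entangled core inductively, and Lemma \ref{3.1}.3 appends the reservoir pairs.

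One substantive point needs correction. You expect Main Proposition \ref{5.4} (the classification of minimal zero-sum sequences of maximal length in rank two) to be the tool for \emph{constructing} the required pairing behind Theorem \ref{5.1}. In the paper its role is the opposite: it drives the negative, upper-bound direction, showing that for $G=C_m\oplus C_{mn}$ the equality $\rho_3(G)=\mathsf D(G)+\lfloor \mathsf D(G)/2\rfloor$ \emph{fails} unless $n=1$ or $m=n=2$. The positive constructions verifying {\bf C2} in rank two (Corollaries \ref{5.2} and \ref{5.3}) are explicit and do not invoke the classification at all, and they necessarily work with $k^*>1$ (for $C_2\oplus C_{2n}$ the paper needs $k^* = 2n-1$). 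This matters for your program: the core gadget you hope to build cannot in general exist at $k^*=1$, so any general attack on {\bf C2} must allow $k^*$ to grow with the group, a phenomenon your sketch does not anticipate and which is the real content of Section \ref{5}.
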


\medskip
In Proposition \ref{3.5},  we show that Conjecture {\bf C1} holds for groups $G$ with $\mathsf D (G)=\mathsf D^* (G)$. All results of the present paper support Conjecture {\bf C2}. In particular, Theorem \ref{4.1}  provides groups satisfying {\bf C2} with $k^* = 1$, and  Theorem \ref{5.1} shows that  {\bf C2} need not hold with $k^* =1$.

We start with some consequences of the above conjecture.
The Characterization Problem is a central topic in factorization theory for Krull monoids
(we refer to \cite[Sections 7.1 - 7.3]{Ge-HK06a} for general information, and to \cite{Sc09b, Sc09c, B-G-G-P13a, Ge-Sc16a} for recent progress). The Characterization Problem studies the question  whether or not the system of sets of lengths of a Krull monoid, which has a prime divisor in every class, determines the class group. Thus, if $G$ and  $G'$ are two finite abelian groups with $\mathsf D (G) \ge 4$ such that $\mathcal L (G) = \mathcal L (G')$, does it follow that $G$ and $G'$ are isomorphic?  The answer is affirmative (among others) for groups of rank at most two, and there are no counter examples so far.  Corollary \ref{3.4} offers a simple proof in case of cyclic groups  which relies only on the $\rho_k (\cdot)$-invariants.

\medskip
\begin{corollary} \label{3.4}
Suppose that Conjecture {\bf C1} holds.
\begin{enumerate}
\smallskip
\item Let $H$ be a Krull monoid with finite class group $G$ such that every class contains a prime divisor and suppose that $\mathsf D (G) \ge 4$. Then the following statements are equivalent\,{\rm :}
\begin{enumerate}
\smallskip
\item[(a)] $G$ is cyclic.

\smallskip
\item[(b)] $\rho_{2k+1} (H) = k \mathsf D (G) + 1$ for every $k \in \mathbb N$.

\smallskip
\item[(c)] $\rho_3 (H) = \mathsf D (G)+1$.
\end{enumerate}

\smallskip
\item Let $G$ be cyclic with $\mathsf D (G) \ge 4$. If $G'$ is a finite abelian group with $\mathcal L (G) = \mathcal L (G')$, then $G \cong G'$.
\end{enumerate}
\end{corollary}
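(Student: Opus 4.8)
The overall strategy is to transfer to the monoid of zero-sum sequences via Proposition~\ref{2.1}, so that throughout we may work with the invariants $\rho_k(G)$ in place of $\rho_k(H)$, and then to exploit the tension between the two parts of Proposition~\ref{3.2}.

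For statement~1 I would establish the cycle (a) $\Rightarrow$ (b) $\Rightarrow$ (c) $\Rightarrow$ (a). The implication (a) $\Rightarrow$ (b) is exactly Proposition~\ref{3.2}.2, and (b) $\Rightarrow$ (c) is the case $k=1$. The content lies in (c) $\Rightarrow$ (a), which I would prove by contraposition. Assuming $G$ is noncyclic with $\mathsf D(G) \ge 4$, Conjecture~{\bf C1} (granted by hypothesis) supplies $U \in \mathcal A(G)$ and $S_1, S_2 \in \mathcal F(G)$ with $U = S_1 S_2$, $|U| = \mathsf D(G)$, and $\Sigma(S_1) \cup \Sigma(-S_2) \ne G \setminus \{0\}$; Proposition~\ref{3.2}.1 then gives $\rho_3(G) > \mathsf D(G)+1$, so that (c) fails. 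Hence (c) forces $G$ to be cyclic.

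For statement~2 the first point is that $\mathcal L(G)$ determines every $\mathcal U_k(G)$ --- being the union of all sets of lengths containing $k$ --- and therefore determines $\rho_k(G) = \sup \mathcal U_k(G)$. Thus $\mathcal L(G) = \mathcal L(G')$ yields $\rho_k(G) = \rho_k(G')$ for all $k$. Reading off $\mathsf D$ from the identity $\rho_2 = \mathsf D$ (the case $k=1$ of $\rho_{2k}(G) = k\mathsf D(G)$ in Lemma~\ref{3.1}.2) gives $\mathsf D(G') = \rho_2(G') = \rho_2(G) = \mathsf D(G)$, and since $G$ is cyclic we have $|G| = \mathsf D(G) = \mathsf D(G') \ge 4$ (Proposition~\ref{2.3}). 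Next, Proposition~\ref{3.2}.2 gives $\rho_3(G) = \mathsf D(G)+1$, hence $\rho_3(G') = \mathsf D(G')+1$. Applying statement~1 to the Krull monoid $\mathcal B(G')$ --- permissible because $|G'| \ge \mathsf D(G') \ge 4$ forces $|G'| \ge 3$, so every class of $\mathcal B(G')$ contains a prime divisor --- the implication (c) $\Rightarrow$ (a) shows $G'$ is cyclic. Two cyclic groups with $|G| = \mathsf D(G) = \mathsf D(G') = |G'|$ are isomorphic, so $G \cong G'$.

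I expect no genuine computational obstacle: the hard analytic input is already encapsulated in Proposition~\ref{3.2} and in the assumed Conjecture~{\bf C1}. The only point requiring care is the reverse-engineering of invariants from the length system, namely that $\mathcal L$ recovers all $\rho_k$ and, in particular, that the pair $\bigl(\rho_2(G), \rho_3(G)\bigr) = \bigl(\mathsf D(G), \rho_3(G)\bigr)$ already distinguishes cyclic from noncyclic groups once {\bf C1} is in force. After that, statement~2 is a two-line deduction from statement~1.
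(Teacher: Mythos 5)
Your proposal is correct and follows essentially the same route as the paper: part 1 via Proposition~\ref{2.1}, Proposition~\ref{3.2}.2 for (a)\,$\Rightarrow$\,(b), and contraposition through Conjecture~{\bf C1} and Proposition~\ref{3.2}.1 for (c)\,$\Rightarrow$\,(a); part 2 by reading off $\mathsf D$ from $\rho_2$ and cyclicity from $\rho_3$, then concluding from $|G|=\mathsf D(G)=\mathsf D(G')=|G'|$. Your explicit justifications (that $\mathcal L$ determines all $\mathcal U_k$ and hence all $\rho_k$, and that $\mathcal B(G')$ qualifies as the Krull monoid to which part 1 applies) are points the paper leaves implicit, but they are exactly the intended argument.
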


\begin{proof}
1. By Proposition \ref{2.1}, it suffices to consider $\rho_k (G)$ for all $k \in \N$.
The implication \
(a) \,$\Rightarrow$\, (b) \ follows from Proposition \ref{3.2}.2, and
(b) \,$\Rightarrow$\, (c) \ is obvious.

\smallskip
(c) \,$\Rightarrow$\, (a) \ If $G$ would be  noncyclic, then {\bf C1} and Proposition \ref{3.2}.1 would imply that $\rho_3 (G) > \mathsf D (G)+1$.

\smallskip
2.
Suppose that $\mathcal L (G) = \mathcal L (G')$. Then
\[
\mathsf D (G) = \rho_2 (G) = \rho_2 (G') = \mathsf D (G') \quad \text{and} \quad \mathsf D (G')+1 = \mathsf D (G)+1 = \rho_3 (G) = \rho_3 (G') \,.
\]
Thus 1. implies that $G'$ is cyclic, and since $|G|=\mathsf D (G) = \mathsf D (G')=|G'|$,  $G$ and $G'$ are isomorphic.
\end{proof}

\smallskip
For Conjecture {\bf C1} and for Corollary \ref{3.4}, the assumption $\mathsf D (G)\ge 4$ is crucial. By Proposition \ref{2.3}, the groups $C_3$ and $C_2 \oplus C_2$ are the only groups (up to isomorphism) whose Davenport constant is equal to three. The group $C_2 \oplus C_2$ does not satisfy {\bf C1}, $\rho_3 (C_2 \oplus C_2) = 4$ (in contrast to Corollary \ref{3.4}.1), and $\mathcal L (C_3)=\mathcal L (C_2 \oplus C_2)$ (see \cite[Theorem 7.3.2]{Ge-HK06a}).

\smallskip
The only groups $G$ with $\mathsf D (G)>\mathsf D^* (G)$, for which the precise value of $\mathsf D (G)$ is known, are groups of the form $C_2^4 \oplus C_{2n}$. We verify Conjecture {\bf C1} for them too.

\medskip
\begin{proposition}  \label{3.5}
Let $G$ be a noncyclic finite abelian group with $\mathsf D (G) \ge 4$.
\begin{enumerate}
\item Let  $G = C_{n_1} \oplus \ldots \oplus C_{n_r}$  where $1 < n_1 \t \ldots \t  n_r$ and suppose that
there is some $s \in [1, r-1]$ such that $n_s < n_{s+1} = \ldots = n_r$. Then $\rho_3(G)\geq \mathsf D^* (G)  + n_s$.

\smallskip
\item If  $\mathsf D (G) = \mathsf D^* (G)$, then Conjecture {\bf C1} holds.

\smallskip
\item If $G=C_2^4 \oplus C_{2n}$ with $n \ge 70$, then Conjecture {\bf C1} holds.
\end{enumerate}
\end{proposition}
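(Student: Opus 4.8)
The plan is to produce three atoms whose product telescopes into $\mathsf D^*(G)+n_s$ atoms of length $2$. Fix a basis $e_1,\dots,e_r$ with $\ord(e_i)=n_i$, put $g=e_1+\dots+e_r$, and let $U_1=e_1^{n_1-1}\cdots e_r^{n_r-1}\,g\in\mathcal A(G)$, which has length $\mathsf D^*(G)$. Write $U_1=S_1S_2$ and pick a sequence $T$ with $|T|=n_s$, no term $0$, and $\sigma(T)=\sigma(S_1)$; set $U_2=(-S_1)T$ and $U_3=(-S_2)(-T)$. Since $\sigma(S_1)+\sigma(S_2)=0$, both $U_2,U_3$ are zero-sum, and
\[
U_1U_2U_3=\bigl(S_1(-S_1)\bigr)\bigl(S_2(-S_2)\bigr)\bigl(T(-T)\bigr),
\]
so the product splits, pairing each term with its negative, into $|S_1|+|S_2|+|T|=\mathsf D^*(G)+n_s$ atoms of length $2$. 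Thus $\rho_3(G)\ge\mathsf D^*(G)+n_s$, provided $U_1,U_2,U_3$ are really atoms.

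\textbf{The obstacle in Part 1} is exactly the minimality of $U_2$ and $U_3$; the symmetry of the total content, and hence the length-$2$ factorization, is automatic. The hypothesis $n_s<n_{s+1}=\dots=n_r=:n$ is used twice. First, it gives $\mathsf D^*(G)\ge n_s+n-1\ge 2n_s$, so that one can take $|S_1|,|S_2|\le\mathsf D(G)-n_s$ (for instance $S_1=e_r^{n-1}$), keeping $|U_2|,|U_3|\le\mathsf D(G)$. Second, the surplus room in the top block lets one build $T$ out of $e_s$ and $e_r$ with $\sigma(T)=\sigma(S_1)$ (for example, in $C_2\oplus C_4$ one takes $S_1=e_2^{3}$ and $T=(e_1+e_2)(e_1+2e_2)$) so that, against the run of copies of $-e_r$ on one side and the negated standard blocks on the other, no partial sum vanishes. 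Ruling out such a premature zero-sum — in particular that no block $(-e_i)^{n_i}$ or cancelling pair is accidentally completed inside $U_2$ or $U_3$ — is the technical heart, and it forces the coefficients in $T$ to be tuned to $n_s$ and $n$.

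\textbf{Part 2.} Here $\mathsf D^*(G)=\mathsf D(G)$, so $U_1$ above already has maximal length and I argue {\bf C1} directly by splitting it. If $\exp(G)=n_r\ge 3$, take $S_1=e_r^{n_r-1}$ and $S_2=U_1S_1^{-1}$. Then $\Sigma(S_1)=\la e_r\ra\setminus\{0\}$ consists of multiples of $e_r$, while every element of $\Sigma(-S_2)$ has $e_r$-coordinate in $\{0,n_r-1\}$, since the only occurrence of $e_r$ in $S_2$ sits inside $g$. As $r\ge 2$ and $n_r-1\ge 2$, the element $e_1+e_r$ lies in neither set, so $\Sigma(S_1)\cup\Sigma(-S_2)\ne G\setminus\{0\}$. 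The only remaining case is $\exp(G)=2$, i.e. $G\cong C_2^r$ with $r\ge 3$; for the split $|S_1|=r-1$, $|S_2|=2$ of the length-$(r+1)$ atom the crude bound $|\Sigma(S_1)\cup\Sigma(-S_2)|\le(2^{r-1}-1)+(2^{2}-1)=2^{r-1}+2$ is $<2^{r}-1=|G\setminus\{0\}|$ for $r\ge 3$, so again a nonzero element is omitted and {\bf C1} holds.

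\textbf{Part 3.} Now $\mathsf D(G)>\mathsf D^*(G)$, so $U_1$ is too short and I must import the known evaluation $\mathsf D(C_2^4\oplus C_{2n})=\mathsf D^*(G)+1=2n+5$ together with the structure of its maximal atoms, which is what restricts the argument to $n\ge 70$. The plan is to take an explicit atom $U$ of length $2n+5$ containing an element $e$ of order $2n$ with multiplicity $m\ge 2n-c$ for an absolute constant $c$ (the extremal sequences being near-standard), and to split $S_1=e^{m}$, $S_2=US_1^{-1}$. Then $\Sigma(S_1)\subseteq\la e\ra$ has at most $2n-1$ elements and $|S_2|\le c+5$ is bounded, so $|\Sigma(-S_2)|$ is bounded by a constant; since $|G\setminus\{0\}|=32n-1$, the union $\Sigma(S_1)\cup\Sigma(-S_2)$ cannot exhaust $G\setminus\{0\}$ once $n\ge 70$, which yields {\bf C1}. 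The main obstacle is not the counting but securing the long $e$-run, i.e. the explicit shape of the length-$(2n+5)$ atoms; this is exactly the structural input whose availability pins down the range $n\ge 70$.
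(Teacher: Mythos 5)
Your Part 2 is correct and complete, and it is essentially the paper's own argument: for $\exp(G)=n_r\ge 3$ you use the same split $S_1=e_r^{n_r-1}$, $S_2=US_1^{-1}$ and the same mechanism (every element of $\Sigma(-S_2)$ has $e_r$-coordinate $0$ or $n_r-1$, so $e_1+e_r$ is omitted; the paper omits $-(e_1+\ldots+e_{r-1})+e_r$ instead), while for $G\cong C_2^r$ your cardinality bound $|\Sigma(S_1)\cup\Sigma(-S_2)|\le 2^{r-1}+2<2^r-1$ is a clean substitute for the paper's explicit omitted element $e_2+e_3$. Parts 1 and 3, however, contain genuine gaps.

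In Part 1 you set up the correct template (the paper's construction has exactly this shape: $U_2=(-S_1)T$, $U_3=(-S_2)(-T)$ with $\sigma(T)=\sigma(S_1)$ and $|T|=n_s$), but the entire content of the statement --- exhibiting $S_1$, $S_2$, $T$ for an arbitrary $G$ with $n_s<n_{s+1}=\ldots=n_r$ and verifying that $U_2$ and $U_3$ are atoms --- is precisely what you defer as ``the technical heart,'' supported only by one worked example in $C_2\oplus C_4$. So Part 1 is not proved. For comparison, the paper takes $S_1=\prod_{i\ne s}e_i^{n_i-1}$, $S_2=e_s^{n_s-1}(e_0+e_r)$ and $T=(-e_s+e_r)^{n_s-1}(-e_0-n_se_r)$, where $e_0=e_1+\ldots+e_{r-1}$; the resulting atoms are $U_2=(-e_1)^{n_1-1}\cdot\ldots\cdot(-e_{s-1})^{n_{s-1}-1}(-e_s+e_r)^{n_s-1}(-e_{s+1})^{n_{s+1}-1}\cdot\ldots\cdot(-e_r)^{n_r-1}(-e_0-n_se_r)$ and $U_3=(-e_s)^{n_s-1}(e_s-e_r)^{n_s-1}(-e_0-e_r)(e_0+n_se_r)$, whose minimality is a short coordinate check exploiting $n_s<n_r$.

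In Part 3 there is first a factual error: $\mathsf D(G)>\mathsf D^*(G)$ does not hold for all $n\ge 70$. By the Chen--Savchev theorem (which is what restricts the statement to $n\ge 70$), $\mathsf D(C_2^4\oplus C_{2n})=\mathsf D^*(G)$ when $n$ is even and $\mathsf D^*(G)+1$ only when $n$ is odd; your argument presupposes an atom of length $\mathsf D^*(G)+1$, which does not exist in the even case, and you never route that case through Part 2 (as the paper does). Second, for odd $n$ the structural input you need --- a maximal-length atom containing a term of multiplicity $2n-O(1)$ --- is again left as an acknowledged obstacle rather than established. The paper imports it from Geroldinger--Schneider: with $e_0=e_1+e_2+e_3+e_4$, the sequence $U=(e_1+e_5)(e_2+e_5)(e_3+e_5)(e_4+e_5)(e_0-e_1)(e_0-e_2)(e_0-e_3)(e_0-e_4+e_5)^{2n-3}(-e_5)$ is a minimal zero-sum sequence of length $\mathsf D(G)=2n+5$. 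Granting that citation, your counting does close (split off $S_1=(e_0-e_4+e_5)^{2n-3}$, so that $|S_2|=8$ and $|\Sigma(S_1)\cup\Sigma(-S_2)|\le (2n-3)+255<32n-1$) and is a legitimate alternative to the paper's explicit omitted element $e_1+e_2+e_3-2e_5$; but without the parity split and the structural citation, Part 3 is not proved.
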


\begin{proof}
Let  $\{e_1, \ldots , e_r\}$ be a basis of $G$ with $\ord (e_i) = n_i$ for $i
\in [1, r]$ and $n_1\mid\ldots\mid n_r$. Set $e_0 = e_1+ \ldots + e_{r-1}$.

\smallskip
1. Let
\[
\begin{aligned}
U_1 & = e_1^{n_1-1} \cdot \ldots \cdot e_r^{n_r-1} (e_0+e_r), \\
U_2 & = (-e_1)^{n_1-1} \cdot \ldots \cdot (-e_{s-1})^{n_{s-1}-1}(-e_s+e_r)^{n_s-1} (-e_{s+1})^{n_{s+1}-1}
          \cdot \ldots \cdot (-e_r)^{n_r-1}(-e_0 -n_se_r), \\
U_3 & = (-e_s)^{n_s-1}(e_s-e_r)^{n_s-1}(-e_0-e_r)(e_0+n_s e_r).
\end{aligned}
\]
Then the $U_i$ are each atoms,
and clearly $U_1U_2U_3$ is a product of
\[
\frac{1}{2} |U_1U_2U_3| = \frac{1}{2} ( 2 \mathsf D^* (G) + 2n_s) =
\mathsf D^* (G) + n_s
\]
atoms of length $2$. The assertion follows.

\smallskip
2. We  consider the sequence
\[
U = e_1^{n_1-1} \cdot \ldots \cdot e_r^{n_r-1}e_0 \,,
\]
and distinguish two cases.

First, suppose that $n_r > 2$. We set $S_1=e_r^{n_r-1}$ and $S_2=S_1^{-1}U$. Then $-e_1- \ldots - e_{r-1}+e_r \notin \Sigma (S_1)$ and $e_1+ \ldots + e_{r-1}-e_r \notin \Sigma (S_2)$ because $-e_r \ne e_r$.

Second, suppose that $n_r=2$. Then $G$ is an elementary $2$-group and $r \ge 3$ (as $\mathsf d(G)\geq 3$). We set $S_1=e_1e_2$ and $S_2=e_3 \cdot \ldots \cdot e_re_0$. Then $e_2+e_3 \notin \Sigma (S_1) \cup \Sigma (-S_2)$.

\smallskip
3. Suppose that $\ord (e_1)= \ldots = \ord (e_4)=2$ and $\ord (e_5)=2n$ with $n \ge 70$. If $n$ is even, then $\mathsf D (G)=\mathsf D^* (G)$ by \cite[Theorem 5.8]{Sa-Ch14a}, and the assertion follows from 2. Suppose that $n$ is odd. Then $\mathsf D (G)=\mathsf D^* (G)+1$ by \cite[Theorem 5.8]{Sa-Ch14a}. By \cite[Theorem 4]{Ge-Sc92}, the sequence
\[
U = (e_1+e_5)(e_2+e_5)(e_3+e_5)(e_4+e_5)(e_0-e_1)(e_0-e_2)(e_0-e_3)(e_0-e_4+e_5)^{2n-3}(-e_5)
\]
is a minimal zero-sum sequence of length $|U|=\mathsf D (G)$. We set
\[
S_1 = (e_0-e_4+e_5)^{2n-3}(-e_5) \quad\und \quad S_2=S_1^{-1}U .
\]
Then the element $e_1+e_2+e_3-2e_5 \notin \Sigma (S_1)$, and we assert that its inverse---namely $e_1+e_2+e_3+2e_5=e_0-e_4+e_5$---does not lie in $\Sigma (S_2)$. If there would be a subsequence $T$ of $S_2$ with $\sigma (T)=e_1+e_2+e_3+2e_5$, then we would have $|T|=2$. But none of the subsequences of $S_2$ of length two has sum $e_1+e_2+e_3+2e_5$, a contradiction.
\end{proof}

\bigskip
\section{Inductive Bounds} \label{4}
\bigskip

It is the aim of this section to prove the following result which confirms Conjecture {\bf C2} (with $k^*=1$) for the groups $G$ having the  form below and  satisfying $\mathsf D (G) = \mathsf D^* (G)$.

\medskip
\begin{theorem} \label{4.1}
Let $H$ be a Krull monoid with finite class group $G$ such that every class contains a prime divisor. Suppose that  $G=C_{n_1}^{s_1} \oplus \ldots \oplus C_{n_r}^{s_r}$ where $1 < n_1\mid\ldots\mid n_r$ and  $s_i\geq 2$ for all $i \in [1,r]$. Then
\[
\rho_{2k+1} (H) \geq  \mathsf D^* (G) + \Blfloor \frac{\mathsf  D^* (G)}{2}\Brfloor+(k-1)\mathsf D(G)  \quad \mbox{ for every $k\geq 1$} \,.
\]
In particular, if $\mathsf D(G)=\mathsf D^*(G)$, then $\rho_{2k+1} (H) = k \mathsf D (G) + \Blfloor \frac{\mathsf  D (G)}{2} \Brfloor$ for every $k\geq 1$.
\end{theorem}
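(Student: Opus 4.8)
By Proposition \ref{2.1} we may pass to the monoid of zero-sum sequences and prove all assertions for $\rho_{2k+1}(G)$ in place of $\rho_{2k+1}(H)$. The ``in particular'' statement is then immediate from the main inequality: if $\mathsf D (G) = \mathsf D^* (G)$, the main inequality reads $\rho_{2k+1}(G) \ge k \mathsf D (G) + \Blfloor \frac{\mathsf D (G)}{2} \Brfloor$, while the right-hand side of the crucial inequality (\ref{crucialinequality}) in Lemma \ref{3.1} supplies the matching upper bound $\rho_{2k+1}(G) \le k \mathsf D (G) + \Blfloor \frac{\mathsf D (G)}{2} \Brfloor$. Hence everything reduces to the main inequality.

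The next step is to reduce the main inequality to the single case $k=1$. Suppose we have already established the base estimate $\rho_3 (G) \ge \mathsf D^* (G) + \Blfloor \frac{\mathsf D^* (G)}{2} \Brfloor$. Applying Lemma \ref{3.1}.3 with its parameter $k$ set equal to $1$, its parameter $l$ set equal to the given $k \ge 1$, and $m = \mathsf D^* (G) + \Blfloor \frac{\mathsf D^* (G)}{2} \Brfloor$, we obtain $\rho_{2k+1}(G) \ge \mathsf D^* (G) + \Blfloor \frac{\mathsf D^* (G)}{2} \Brfloor + (k-1)\mathsf D (G)$, which is exactly the main inequality. This is the inductive content of the section, and it leaves only the base case to be proved.

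The heart of the argument is therefore the construction for $k=1$. The plan is to exhibit three atoms $U_1, U_2, U_3 \in \mathcal A (G)$ whose product is \emph{symmetric}, meaning $\mathsf v_g (U_1 U_2 U_3) = \mathsf v_{-g}(U_1 U_2 U_3)$ for all $g$ (with even multiplicity at the elements of order two), so that $U_1U_2U_3$ also factors into $\frac{1}{2}|U_1 U_2 U_3|$ atoms $g(-g)$ of length two. Then $3 \in \mathsf L (U_1U_2U_3)$ and $\frac12 |U_1U_2U_3| \in \mathsf L (U_1U_2U_3)$, whence $\rho_3 (G) \ge \frac{1}{2}|U_1 U_2 U_3|$. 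Fix a basis $f_1, \ldots, f_d$ with $\ord (f_j) = m_j$, where each invariant $n_i$ occurs $s_i \ge 2$ times among the $m_j$, and set $e_0 = f_1 + \ldots + f_d$. Starting from the maximal atom $U_1 = f_1^{m_1-1} \cdot \ldots \cdot f_d^{m_d-1} e_0$ of length $\mathsf D^* (G)$, I would write $U_1 = S_1 S_2$ and then seek $T_1, T_2 \in \mathcal F (G)$ with $\sigma (T_1) = \sigma (S_1)$ and $\sigma (T_2) = \sigma (S_2)$ such that $U_2 = (-S_1)T_1$ and $U_3 = (-S_2)T_2$ are atoms and $T_1 T_2$ is symmetric. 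Since $\sigma (S_2) = -\sigma (S_1)$, the product telescopes,
\[
U_1 U_2 U_3 = \big( S_1 (-S_1) \big)\big( S_2 (-S_2) \big)\,(T_1 T_2) \,,
\]
and is symmetric as soon as $T_1 T_2$ is. Taking $|T_1| = |S_2|$ and $|T_2| = |S_1|$, decreasing one of them by one when $\mathsf D^* (G)$ is odd, gives $|U_1 U_2 U_3| = 2 \mathsf D^* (G) + 2 \Blfloor \frac{\mathsf D^* (G)}{2} \Brfloor$ and hence the desired bound; note that each $U_2, U_3$ then has length at most $\mathsf D^* (G) \le \mathsf D (G)$, as an atom must.

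The construction of $T_1$ and $T_2$ is where the hypothesis $s_i \ge 2$ is indispensable, and it is the main obstacle. Because every cyclic factor occurs at least twice, the basis splits into twin pairs of equal order, and I would use transposed or reflected twin generators to assemble a \emph{symmetric} sequence $T_1 T_2$ of the prescribed length $2\Blfloor \frac{\mathsf D^* (G)}{2} \Brfloor$ realizing the partial sums $\sigma (S_1)$ and $\sigma (S_2)$, while keeping its support sufficiently disjoint from $\supp (S_1) \cup \supp (S_2)$ that no proper nontrivial zero-sum subsequence arises in $U_2$ or $U_3$. The rank-two blocks $C_m^2$ already display the mechanism: splitting the maximal atom $e_1^{m-1} e_2^{m-1}(e_1+e_2)$ and adjoining fresh elements $g, -g, h, -h$ with $g+h = \sigma (S_1)$ yields three atoms whose product is symmetric of length $2\mathsf D^* + 2\lfloor \mathsf D^*/2 \rfloor$. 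The delicate points to verify in general are the minimality of $U_2$ and $U_3$ (equivalently, that $T_1, T_2$ are zero-sum free and combine with $-S_1, -S_2$ without creating short zero-sums) and the parity bookkeeping responsible for the floor, including the even-multiplicity requirement at the elements of order two.
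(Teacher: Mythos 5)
Your reductions match the paper's: Proposition \ref{2.1} passes to $\rho_k(G)$, Lemma \ref{3.1} (you cite part 3, the paper cites parts 1 and 2 -- equivalent for this purpose) reduces the main inequality to the case $k=1$, and Lemma \ref{3.1}.2 supplies the matching upper bound when $\mathsf D(G)=\mathsf D^*(G)$. Your framework for $k=1$ -- write $U_1=S_1S_2$ and seek $U_2=(-S_1)T_1$, $U_3=(-S_2)T_2$ with $T_1T_2$ symmetric, so that $U_1U_2U_3$ factors into atoms of length two -- is also the right shape: Assertion A in the proof of Theorem \ref{5.1} shows that any factorization attaining $\lfloor 3\mathsf D(G)/2\rfloor$ has essentially this form, and small cases such as $C_3^2$ can indeed be realized this way.

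The gap is that the construction itself is never carried out, and it is the entire content of the theorem. Everything beyond Lemma \ref{3.1} consists of exhibiting explicit $T_1,T_2$ for a general $G=C_{n_1}^{s_1}\oplus\ldots\oplus C_{n_r}^{s_r}$, proving that $U_2$ and $U_3$ are minimal zero-sum sequences, and making the parity come out to $2\lfloor \mathsf D^*(G)/2\rfloor$; your proposal explicitly defers precisely these points (``I would use transposed or reflected twin generators\dots'', ``the delicate points to verify in general are the minimality of $U_2$ and $U_3$\dots and the parity bookkeeping''). This is not a routine verification. The paper does not attempt a one-shot construction over the full basis: it proves the bound for the building blocks $C_m^2$ (Lemma \ref{4.4}) and $C_m^3$ (Lemma \ref{4.6}) and then glues summands by induction on the number of distinct orders via Lemma \ref{4.5}. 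Crucially, that induction must track not only the numerical bound but also the \emph{spread} of each factorization (whether it contains a length-three atom, a traversal, or only length-two atoms), because the gluing step loses one atom when a traversal is available and two when it is not; this extra data is exactly what makes the floor function come out right for every parity combination of $\mathsf D^*(K)$ and $\mathsf D^*(L)$. Note also that the paper's witnesses for odd $\mathsf D^*(G)$ are not symmetric products -- all three $U_i$ keep full length and one traversal appears -- whereas your scheme instead drops the total length of $U_2U_3$ by one; the counts agree, but your variant would still need either its own analogue of the gluing mechanism or a genuinely global construction with zero-sum-freeness checks across all mixed-order blocks, none of which is supplied. As it stands, the base inequality $\rho_3(G)\geq \mathsf D^*(G)+\lfloor \mathsf D^*(G)/2\rfloor$ is asserted, not proved.
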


\smallskip
Theorem \ref{4.1} has the following straightforward consequences. Let $n \ge 2$.  It is known that $\mathsf D (C_n^r)=\mathsf D^*(C_n^r)$ for $r \in [1,2]$ and if $\mathsf D (C_n^r) = \mathsf D^* (C_n^r)$ holds for some $r \ge 3$, then
$\mathsf D (C_n^s) = \mathsf D^* (C_n^s)$ for all $s \in [1,r]$. The standing conjecture is that $\mathsf D (C_n^r) = \mathsf D^* (C_n^r)$ for all $r \in \N$.

\medskip
\begin{corollary} \label{4.2}
Let $G = C_n^r$, where $n \ge 2$  and $r\geq 1$, and suppose that $\mathsf D (G) = \mathsf D^* (G)$.
Then, for every  $k\geq 1$, we have
\[
\rho_{2k+1} (G) = \begin{cases}
                  k \mathsf D (G) + 1 & \ r = 1 \\
                  k \mathsf D (G) + \Blfloor \frac{\mathsf  D (G)}{2} \Brfloor & \ r \ge 2 \,.
                  \end{cases}
\]
\end{corollary}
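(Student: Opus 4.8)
The plan is to deduce the statement entirely from Theorem~\ref{4.1} together with the known behaviour of cyclic groups, after using Proposition~\ref{2.1} to pass from $H$ to the monoid of zero-sum sequences and argue with $\rho_{2k+1}(G)$ directly. The natural division is by rank: the case $r\ge 2$ should drop out of Theorem~\ref{4.1}, whereas $r=1$ falls outside its hypotheses and must be handled separately by invoking the cyclic result.

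For $r\ge 2$, first I would observe that $G=C_n^r$ already has the shape required by Theorem~\ref{4.1}: it is the single homogeneous block $C_{n_1}^{s_1}$ with $n_1=n>1$ and $s_1=r\ge 2$, so the hypothesis $s_i\ge 2$ is satisfied. Writing $D=\mathsf D(G)=\mathsf D^*(G)$ (the two coincide by assumption), the lower bound furnished by Theorem~\ref{4.1} becomes $D+\lfloor D/2\rfloor+(k-1)D=kD+\lfloor D/2\rfloor$ for every $k\ge 1$. Matching this against the upper bound $\rho_{2k+1}(G)\le kD+\lfloor D/2\rfloor$ coming from the crucial inequality in Lemma~\ref{3.1}.2 forces equality, which is exactly the asserted value $k\mathsf D(G)+\lfloor \mathsf D(G)/2\rfloor$. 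In fact Theorem~\ref{4.1} already records this equality in its ``in particular'' clause, so for $r\ge 2$ the corollary is essentially a restatement of the theorem.

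For $r=1$, $G=C_n$ is cyclic and Theorem~\ref{4.1} no longer applies, so I would appeal to the cyclic result instead. When $\mathsf D(G)=n\ge 4$, Proposition~\ref{3.2}.2 gives $\rho_{2k+1}(G)=k\mathsf D(G)+1$ directly. The remaining small orders must be verified by hand: for $n=2$ we have $|G|=2$, so $\mathcal U_k(G)=\{k\}$ by Proposition~\ref{2.4}.1 and hence $\rho_{2k+1}(G)=2k+1=k\mathsf D(G)+1$; for $n=3$ we have $\mathsf D(G)=3$, and Lemma~\ref{3.1}.2 yields $\rho_{2k+1}(G)=k\mathsf D(G)+1$. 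Together these cover every $n\ge 2$ in the cyclic case.

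I expect no serious obstacle, since the arithmetic content is carried by Theorem~\ref{4.1} and by the quotable cyclic-group results. The only point demanding genuine care is the bookkeeping at the boundary between the two cases: checking that $C_n^r$ meets the $s_i\ge 2$ hypothesis of Theorem~\ref{4.1} precisely when $r\ge 2$, and recalling that Proposition~\ref{3.2}.2 is stated under $\mathsf D(G)\ge 4$, so the orders $n\in\{2,3\}$ require the separate elementary arguments above rather than an appeal to that proposition.
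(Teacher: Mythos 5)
Your proof is correct and takes essentially the same route as the paper, which likewise settles $r\ge 2$ by Theorem~\ref{4.1} and $r=1$ by Proposition~\ref{3.2}.2. Your additional verification of the small cyclic cases $n\in\{2,3\}$ (where the hypothesis $\mathsf D(G)\ge 4$ of Proposition~\ref{3.2} formally fails) is handled correctly via Proposition~\ref{2.4}.1 and Lemma~\ref{3.1}.2, and in fact patches a detail that the paper's one-line proof glosses over.
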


\begin{proof}
For $r=1$, this follows from Proposition \ref{3.2}.2.  For $r \ge 2$, it follows from Theorem \ref{4.1}.
\end{proof}

\medskip
\begin{corollary} \label{4.3}
Let  $G = C_{q_1}^{s_1} \oplus \ldots \oplus C_{q_r}^{s_r}$ be a $p$-group where $q_1, \ldots, q_r$ are powers of a fixed prime and  $s_1, \ldots, s_r \in \N_{\ge 2}$.  Then
\[
\rho_{2k+1} (G) = k \mathsf D (G) + \Blfloor \frac{\mathsf  D (G)}{2} \Brfloor \quad \mbox{ for every $k\geq 1$}\,.
\]
\end{corollary}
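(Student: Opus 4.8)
The plan is to deduce Corollary \ref{4.3} directly from Theorem \ref{4.1}, the only two additional ingredients being the shape of the invariant-factor decomposition of a $p$-group and the equality $\mathsf D(G) = \mathsf D^*(G)$ valid for $p$-groups. Since Theorem \ref{4.1} already supplies the hard lower bound, essentially all that remains is to verify that the group in the corollary satisfies its hypotheses and to identify the relevant invariants.

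First I would rewrite $G$ in the normal form demanded by Theorem \ref{4.1}. Because $q_1, \ldots, q_r$ are all (nontrivial) powers of one fixed prime $p$, any two of them are comparable under divisibility. Hence, after reordering the summands and merging all summands whose cyclic factor has the same order, I can write
\[
G \cong C_{m_1}^{t_1} \oplus \ldots \oplus C_{m_u}^{t_u} \,, \qquad \text{where } \ 1 < m_1 \mid m_2 \mid \ldots \mid m_u \,,
\]
the divisibility chain being automatic since every $m_j$ is a power of $p$. Here $t_j = \sum_{q_i = m_j} s_i$, and each $t_j \ge 2$ because every $s_i \ge 2$. Thus $G$ meets the hypotheses of Theorem \ref{4.1}. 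Next, since $G$ is a $p$-group, Proposition \ref{2.3}.2 gives $\mathsf D(G) = \mathsf D^*(G)$.

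With these two observations the claim follows. Applying Theorem \ref{4.1} to the Krull monoid $H = \mathcal B(G)$---whose class group is $G$ and in which every class contains a prime divisor---and invoking its ``in particular'' clause, which applies precisely because $\mathsf D(G) = \mathsf D^*(G)$, yields $\rho_{2k+1}(\mathcal B(G)) = k \mathsf D(G) + \Blfloor \frac{\mathsf D(G)}{2} \Brfloor$ for every $k \ge 1$. Since $\rho_{2k+1}(G) = \rho_{2k+1}(\mathcal B(G))$ by our convention (equivalently by Proposition \ref{2.1}), this is exactly the asserted equality.

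I expect no substantial obstacle: the corollary is essentially a specialization of Theorem \ref{4.1}. The only points requiring genuine care are the combinatorial bookkeeping in the first step---checking that the regrouped decomposition really satisfies $1 < m_1 \mid \ldots \mid m_u$ with all exponents at least $2$---and the clean identification of the $p$-group hypothesis with the equality $\mathsf D(G) = \mathsf D^*(G)$. Both are routine, so the \emph{hard part has already been done} inside Theorem \ref{4.1}, and the corollary is a short verification that its hypotheses are met.
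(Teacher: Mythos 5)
Your proof is correct and takes essentially the same route as the paper: the paper's own proof simply notes that $\mathsf D(G) = \mathsf D^*(G)$ for $p$-groups by Proposition \ref{2.3} and then invokes Theorem \ref{4.1}. The extra bookkeeping you supply---reordering and merging the summands $C_{q_i}^{s_i}$ into a divisibility chain with all exponents at least $2$, and passing through $H = \mathcal B(G)$ via Proposition \ref{2.1}---is exactly the verification the paper leaves implicit.
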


\begin{proof}
Since $G$ is a $p$-group, we have $\mathsf D(G) = \mathsf D^* (G)$ by Proposition \ref{2.3}, and hence the assertion follows from Theorem \ref{4.1}.
\end{proof}

\medskip
We start with the preparations for the proof of Theorem \ref{4.1}.
Let $G$ be a finite abelian group. The inequality $\rho_{3}(G)\geq \omega$ means there are  $U_1,U_2,U_3, W_1,\ldots,W_\rho\in\mathcal A(G)$ with \be\label{rho3def}U_1U_2U_3=W_1\cdot\ldots\cdot W_\rho\quad\und\quad \rho\geq \omega.\ee
For each $W_i$, where $i\in [1,\rho]$, we may write $W_i=T_{i,1}T_{i,2}T_{i,3}$ with the $T_{i,j}\mid U_j$ subsequences such that $\prod_{i=1}^{\rho}T_{i,j}=U_j$ for each $j\in [1,3]$.
There may be multiple ways to do so. If there is a way to do so with $|T_{i,j}|\leq 1$ for all $i$ and $j$, then we say that the factorization \eqref{rho3def} is \emph{weakly reduced}.
Let
$W'_i$ be the sequence obtained from $W_i=T_{i,1}T_{i,2}T_{i,3}$ by replacing each nonempty $T_{i,j}$ with the sum of its terms. Likewise, let $U'_j$ be the sequence obtained from $U_j=\prod_{i=1}^{\rho}T_{i,j}$ by replacing each nonempty $T_{i,j}$ with the sum of its terms.
The sequence $X=\prod_{i=1}^\rho |W'_i|\in \Fc(\Z)$ is called a \emph{spread} for the factorization \eqref{rho3def}, and it depends on the $T_{i,j}$ (so a given factorization \eqref{rho3def} may have multiple spreads). It is readily seen that if $U\in \mathcal A(G)$ is an atom and $T\mid U$ is nontrivial, then the sequence $UT^{-1}\sigma(T)$, obtained by replacing the terms from $T$ with their sum, is also an
 atom. Hence the $W'_i$ and $U'_i$ are atoms with $U'_1U'_2U'_3=W'_1\cdot\ldots\cdot W'_\rho$ a weakly reduced factorization having $|W'_i|\leq 3$ for all $i$. Thus, when $\rho_3(G)\geq \omega$, we may always assume our factorization  \eqref{rho3def} is weakly reduced.
 We say that \emph{$\rho_3(G)\geq \omega$ with spread $X\in \Fc(\{1,2,3\})$} if there exists a factorization \eqref{rho3def} having spread $X$, in which case, per the argument above, we may also assume there is a weakly reduced factorization having spread $X$.

 If $1\in \supp (X)$, then  some $W_i$,  say $W_1$, has $W_1=T_{1,j}$ for some $j$, implying that the zero-sum  sequence $W_1=T_{1,j}$ is a subsequence of $U_j$. As $U_j$ is an atom, this is only possible if $W_1=T_{1,j}=U_j$, which forces all other $T_{i,j}$ with $i\neq 1$ to be empty in view of $\prod_{i=1}^\rho T_{i,j}=U_j$. In particular, $|W'_i|\leq 2$ for all $i$ when $1\in \supp(X)$, meaning we cannot have both $1,\,3\in \supp(X)$. From this, we see that we have three mutually exclusive possibilities for a spread $X$: $$1\in \supp(X)\quad\mbox{ or } \quad  \supp(X)=\{2\}\quad\mbox{ or } \quad 3\in \supp(X).$$ Note there is a spread $X$ with $1\in \supp(X)$ precisely when $W_s=U_t$ for some $s\in [1,\rho]$ and $t\in [1,3]$. If $0\in \supp(U_1U_2U_3)$, then $1$ will be a term in any spread $X$. If $\supp(X)=\{2\}$ and \eqref{rho3def} is weakly reduced, so that $|W_i|=2$ for all $i$, then $U_1$ must have a subsequence $x y\mid U_1$ with $-x\in \supp(U_2)$ and $-y\in \supp(U_3)$, with similar statements holding for $U_2$ and $U_3$. When $3\in \supp(X)$, we refer to a $W_i$ with $|W'_i|=3$ as a \emph{traversal} for the factorization \eqref{rho3def}.

\begin{lemma} \label{4.4}
Let $G = G_1 \oplus G_2$ be a finite  abelian group where   $G_1, G_2 \subset G$ are nontrivial subgroups with
$\mathsf D(G_1)\leq \mathsf D(G_2)$. Then $\rho_3(G)\geq 2\mathsf D(G_1)+\mathsf D(G_2)-2$  with spread $X\in \Fc(\{2,3\})$ having  $\vp_3(X)=1$.
\end{lemma}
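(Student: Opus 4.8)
The plan is to prove the bound by exhibiting an explicit factorization \eqref{rho3def} of the required length whose spread has a single traversal. Write $d_1 = \mathsf D(G_1)$ and $d_2 = \mathsf D(G_2)$, so $d_1 \le d_2$, and fix minimal zero-sum sequences of maximal length $V_1 = a_1 \cdot \ldots \cdot a_{d_1} \in \mathcal A(G_1)$ and $V_2 = b_1 \cdot \ldots \cdot b_{d_2} \in \mathcal A(G_2)$. The inequality $d_1 \le d_2$ is used to make the terms $b_1, \ldots, b_{d_1-1}$ available as \emph{shifts}; this is the only place the hypothesis $\mathsf D(G_1) \le \mathsf D(G_2)$ will enter. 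I would then set
\[
\begin{aligned}
U_1 &= a_1 \cdot \ldots \cdot a_{d_1-1} \cdot b_1 \cdot \ldots \cdot b_{d_2-1} \cdot (a_{d_1}+b_{d_2}), \\
U_2 &= (a_1+b_1) \cdot \ldots \cdot (a_{d_1-1}+b_{d_1-1}) \cdot (-b_1) \cdot \ldots \cdot (-b_{d_2-1}) \cdot \Big(a_{d_1}+\sum_{j=d_1}^{d_2-1}b_j\Big), \\
U_3 &= (-a_1) \cdot \ldots \cdot (-a_{d_1-1}) \cdot (-a_1-b_1) \cdot \ldots \cdot (-a_{d_1-1}-b_{d_1-1}) \cdot \Big(-2a_{d_1}+\sum_{i=1}^{d_1-1}b_i\Big).
\end{aligned}
\]

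The first step is to read off the factorization of $U_1U_2U_3$. The terms pair up as $b_j \cdot (-b_j)$ (a term of $U_1$ with a term of $U_2$, for $j \in [1,d_2-1]$), as $a_i \cdot (-a_i)$ ($U_1$ with $U_3$, for $i \in [1,d_1-1]$), and as $(a_i+b_i)(-a_i-b_i)$ ($U_2$ with $U_3$, for $i \in [1,d_1-1]$), while the three ``diagonal'' terms $t_1 = a_{d_1}+b_{d_2}$, $t_2 = a_{d_1}+\sum_{j=d_1}^{d_2-1}b_j$ and $t_3 = -2a_{d_1}+\sum_{i=1}^{d_1-1}b_i$ remain. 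Using $\sigma(V_1)=\sigma(V_2)=0$ one checks $t_1+t_2+t_3=0$. This yields a factorization into $(d_2-1)+(d_1-1)+(d_1-1)$ sequences of length $2$ together with the single traversal $T=t_1t_2t_3$, for a total of $2d_1+d_2-2$ atoms; all the $T_{i,j}$ have $|T_{i,j}|\le 1$, so the factorization is weakly reduced with spread $X = 2^{2d_1+d_2-3}\cdot 3 \in \mathcal F(\{2,3\})$ and $\vp_3(X)=1$, exactly as claimed.

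It remains to verify that $U_1,U_2,U_3$ and $T$ are atoms. For $U_1$ and $U_2$ I would project onto $G_1$ and $G_2$: the $G_1$-components appearing are exactly the terms of $V_1$, so a nonempty zero-sum subsequence must use either none or all of them, and its $G_2$-part then forces it to be empty or everything by the zero-sum-freeness of the proper subsequences of $V_2$. For $U_3$ the same dichotomy is obtained by a short case analysis on how many of the shifted negatives $-a_i-b_i$ and the unshifted $-a_i$ are used, again reducing to the fact that proper subsequence sums of $V_1$ and $V_2$ never vanish. For the traversal, once each $t_i \ne 0$ it is automatically minimal, since a length-$2$ zero-sum subsequence of $T$ would force the remaining term to vanish.

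The main obstacle is the atom $U_3$, whose length is $2d_1-1 > \mathsf D(G_1)$, so it cannot be supported on $G_1$ alone and stay irreducible; this is precisely why one cannot simply pair a doubled $V_1$ against $-V_1$. The shifts $b_i$ are introduced to give $U_3$ genuine content in $G_2$: replacing the second batch of negatives $-a_i$ by $-a_i-b_i$ destroys every short zero-sum, and the verification goes through because $b_1 \cdot \ldots \cdot b_{d_1-1}$ is zero-sum free as a proper subsequence of the atom $V_2$ (here $d_1-1 \le d_2-1$ is needed). A final subtlety to handle is that the $G_1$-component $-2a_{d_1}$ of $t_3$ vanishes when $\ord(a_{d_1})=2$; in that case one observes that the $G_2$-component of $t_3$ equals $b_1+\ldots+b_{d_1-1}\ne 0$, so $t_3 \ne 0$ regardless, keeping $T$ a genuine atom of length $3$. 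Combining these verifications gives $\rho_3(G)\ge 2d_1+d_2-2$ with the asserted spread.
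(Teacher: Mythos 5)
Your proposal is correct and takes essentially the same approach as the paper: starting from maximal-length atoms of $G_1$ and $G_2$, one builds three explicit atoms of lengths $\mathsf D(G_1)+\mathsf D(G_2)-1$, $\mathsf D(G_1)+\mathsf D(G_2)-1$ and $2\mathsf D(G_1)-1$ whose product refines into length-$2$ atoms together with a single traversal, with atomicity checked by projecting onto $G_1$ and $G_2$. The only differences are in the details of the construction: your $U_1$ coincides with the paper's, while your $U_2$ and $U_3$ use the aggregated terms $t_2$ and $t_3$ (which forces your extra check that $t_3\neq 0$ when $2a_{d_1}=0$), whereas the paper spreads the mixing over difference terms $v_i-w_i$, $w_i-v_i$ and keeps a traversal $(v_0+w_0)(-w_0)(-v_0)$ consisting of singleton elements that are trivially nonzero.
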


\begin{proof}
Let $$V=v_0\cdot\ldots\cdot v_{\mathsf d(G_1)}\in \mathcal A(G_1)\quad\und\quad W=w_0\cdot\ldots\cdot w_{\mathsf d(G_2)}\in \mathcal A(G_2)$$ be atoms with maximal lengths $|V|=\mathsf d(G_1)+1=\mathsf D(G_1)$ and $|W|=\mathsf d(G_2)+1=\mathsf D(G_2)$. Since $G_1$ and $G_2$ are nontrivial, $0\notin \supp(VW)$. Let $V'=v_1\cdot\ldots\cdot v_{\mathsf d(G_1)}$ and
$W'=w_1\cdot\ldots\cdot w_{\mathsf d(G_1)}$ and define
\[
\begin{aligned}
U_1 & =   (v_0+w_0)V'W'\prod_{i=\mathsf d(G_1)+1}^{\mathsf d(G_2)}w_i=(Vv_0^{-1})(Ww_0^{-1})(v_0+w_0), \\
U_2 & =   (-w_0)(-V')\prod_{i=1}^{\mathsf d (G_1)} (v_i-w_i)\prod_{i=\mathsf d(G_1)+1}^{\mathsf d(G_2)}(-w_i), \quad \text{and}  \\
U_3 & =   (-v_0)(-W')\prod_{i=1}^{\mathsf d (G_1)} (w_i-v_i)  .
\end{aligned}
\]
It is easily seen that  $U_1, U_2, U_3 \in \mathcal A (G)$ are atoms with $(U_1(v_0+w_0)^{-1})(U_2(-w_0)^{-1})(U_3(-v_0)^{-1})$ having a factorization into atoms of length $2$, evidencing that $\rho_3(G)\geq 1+\frac{4\mathsf d(G_1)+2\mathsf d(G_2)}{2}=2\mathsf D(G_1)+\mathsf D(G_2)-2$ with
$(v_0+w_0)(-w_0)(-v_0)$ the unique traversal.
\end{proof}

\medskip
\begin{lemma} \label{4.5}
Let $G = G_1 \oplus G_2$ be a finite abelian group where $G_1, G_2 \subset G$ are nontrivial subgroups such that $\rho_3(G_1)\geq \omega_1$ and $\rho_3(G_2)\geq \omega_2$ both hold with respective spreads $X,\,Y\in \Fc(\{2,3\})$.
\begin{itemize}
 \item[1.] If $\vp_3(X)+\vp_3(Y)\geq 1$, then $\rho_3(G)\geq \omega_1+\omega_2-1$ holds with spread $Z\in \Fc(\{2,3\})$ having $\vp_3(Z)=\vp_3(X)+\vp_3(Y)-1$.
     \item[2.] If $\supp(X)=\supp(Y)=\{2\}$, then  $\rho_3(G)\geq \omega_1+\omega_2-2$ holds with spread $Z\in \Fc(\{2,3\})$ having $\vp_3(Z)=1$.
\end{itemize}
\end{lemma}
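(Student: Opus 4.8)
The plan is to build a witnessing factorization for $G$ by \emph{fusing} the two given factorizations. Realize $\rho_3(G_1)\geq\omega_1$ by $A_1A_2A_3=B_1\cdot\ldots\cdot B_{\rho_1}$ with spread $X$ and $\rho_3(G_2)\geq\omega_2$ by $C_1C_2C_3=D_1\cdot\ldots\cdot D_{\rho_2}$ with spread $Y$, both weakly reduced. I would form atoms $U_1,U_2,U_3\in\mathcal A(G)$ of $G=G_1\oplus G_2$ by gluing $A_j$ to $C_j$: choose terms $a_j\mid A_j$ and $c_j\mid C_j$ and replace the pair by the single element $a_j+c_j$. Each $U_j$ is again an atom, because a proper zero-sum subsequence would have to vanish in $G_1$ and in $G_2$ separately and would thus restrict to a proper zero-sum subsequence of $A_j$ or of $C_j$; this is exactly the term-summing mechanism recorded before Lemma~\ref{4.4}. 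Every $B_i$ and $D_i$ disjoint from the glued terms survives verbatim as an atom of $G$, with the same distribution over the $U_j$ as it had over the original triple, and so keeps its contribution to the spread.

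For part~1 I would glue \emph{at a traversal}. Since $\vp_3(X)+\vp_3(Y)\geq 1$, one summand, say $G_1$, carries a length-$3$ factor $B_1=b_1b_2b_3$ with $b_j\mid A_j$; if $G_2$ carries a traversal $D_1=d_1d_2d_3$ as well, I glue $a_j=b_j$ to $c_j=d_j$. The two traversals then fuse into the single atom $E=(b_1+d_1)(b_2+d_2)(b_3+d_3)$, no other factor is disturbed, and
\[
U_1U_2U_3=E\cdot\prod_{i\geq 2}B_i\cdot\prod_{i\geq 2}D_i
\]
factors into $1+(\rho_1-1)+(\rho_2-1)=\rho_1+\rho_2-1\geq\omega_1+\omega_2-1$ atoms. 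As $E$ meets all three $U_j$ it is again a traversal, while the surviving $B_i,D_i$ keep their old lengths $|W'|$; since one traversal is consumed on each side and one created, the spread $Z\in\Fc(\{2,3\})$ satisfies $\vp_3(Z)=\vp_3(X)+\vp_3(Y)-1$.

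For part~2 neither spread supplies a traversal, so I would instead \emph{create} exactly one, using three glue points. First I fuse a connecting length-$2$ atom $x(-x)$ of $G_1$ (with $x\mid A_1$, $-x\mid A_2$) with a connecting length-$2$ atom $y(-y)$ of $G_2$ by gluing $x$ to $y$ and $-x$ to $-y$; the resulting mixed terms $x+y$ and $-(x+y)$ form the length-$2$ atom $(x+y)\bigl(-(x+y)\bigr)$, consuming both original atoms. A third glue of a term $a_3\mid A_3$ to a term $c_3\mid C_3$ produces a mixed term $a_3+c_3$ whose $G_1$-part cannot cancel against anything available, forcing it into the length-$3$ bridging atom $(a_3+c_3)(-a_3)(-c_3)$, which meets all three $U_j$ and is the one traversal; its pure terms $-a_3,-c_3$ are precisely the partners freed when the two atoms carrying $a_3$ and $c_3$ are broken. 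Counting, four length-$2$ atoms are consumed and two produced, so $U_1U_2U_3$ factors into $\rho_1+\rho_2-2\geq\omega_1+\omega_2-2$ atoms with $\vp_3(Z)=1$. The structural fact that every $A_j$ (respectively $C_j$) is linked to the other two summands in an all-length-$2$ factorization guarantees the connecting atoms used above exist.

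The main obstacle is the \emph{asymmetric} subcase of part~1, where only one of $X,Y$ carries a traversal. The naive gluing $\mathrm{glue}(A_j,C_j)$ leaves three mixed terms $a_j+c_j$; since their $G_1$-components $a_j$ are nonzero and cannot cancel pairwise (three being odd), at least one length-$3$ factor meeting all three $U_j$ is forced, so this gluing can never push $\vp_3(Z)$ below $1$. When $\vp_3(X)=1$ and $\vp_3(Y)=0$, however, the target is $\vp_3(Z)=0$. Escaping this requires abandoning the one-to-one gluing and re-clustering the $A_j$ so that two traversal terms share a common $U_j$, but this is tightly constrained by the identity $|U_1U_2U_3|=|A_1A_2A_3|+|C_1C_2C_3|-g$ (with $g$ the number of glued pairs) together with the parity of $\vp_3(X)+\vp_3(Y)$, which rigidly link $g$, the total excess of the spread, and $\vp_3(Z)$. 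Producing a clustering that meets all these constraints at once, and verifying that the requisite glue terms exist for every admissible pair $(X,Y)$, is where essentially all the difficulty of the lemma resides.
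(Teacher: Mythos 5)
Your symmetric subcase of Part 1 (both $X$ and $Y$ carry a traversal) and your Part 2 construction are correct and are essentially the paper's own arguments: there too, the two traversals are fused termwise into a single traversal $(a_1+b_1)(a_2+b_2)(a_3+b_3)$, and in Part 2 four connecting length-$2$ atoms are traded for one length-$2$ atom plus one traversal formed by a glued mixed term and its two freed partners. The problem is that this does not prove the lemma. Part 1 also asserts the conclusion when exactly one spread carries a traversal, say $\vp_3(X)=r\geq 1$ and $\supp(Y)=\{2\}$, and for that case you establish only a negative statement: one-to-one gluing forces $\vp_3(Z)\geq 1$, which is incompatible with the required $\vp_3(Z)=0$ when $r=1$. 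You then say the necessary ``re-clustering'' is where the difficulty resides and stop. That is an admitted gap, not a proof, and the case is unavoidable: the induction in Theorem \ref{4.1} invokes exactly this asymmetric instance whenever $\mathsf D^*(K)$ and $\mathsf D^*(L)$ have different parities, and it needs precisely the conclusion $\supp(Z)=\{2\}$ there.

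The re-clustering you are missing is the heart of the paper's proof of this lemma. With $X_1=a_1a_2a_3$ the $G_1$-traversal ($a_i\in\supp(V_i)$) and $xy\mid W_1$, $-x\mid W_2$, $-y\mid W_3$ the connecting terms guaranteed by $\supp(Y)=\{2\}$, the paper sets
\[
U_1=(V_1a_1^{-1})(W_1x^{-1}y^{-1})(x-a_2)(y+a_1+a_2),\qquad
U_2=(V_2a_2^{-1})(W_2(-x)^{-1})(a_2-x),\qquad
U_3=(V_3a_3^{-1})(W_3(-y)^{-1})(a_3-y).
\]
The decisive feature---exactly what your parity obstruction rules out for one-to-one gluings---is that $U_1$ carries two mixed terms, and the $G_1$-part of one of them is $-a_2$, i.e.\ it comes from $V_2$ rather than $V_1$. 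The traversal is thereby dissolved into the two length-$2$ atoms $(x-a_2)(a_2-x)$ and $(y+a_1+a_2)(a_3-y)$, the latter zero-sum because $a_1+a_2+a_3=0$, giving $(\rho_1-1)+(\rho_2-2)+2=\rho_1+\rho_2-1$ atoms with $\vp_3(Z)=\vp_3(X)-1$; atomicity of $U_1$ requires the extra check that $U_1(y+a_1+a_2)^{-1}$ is zero-sum free, done by projecting to the two components. When $r=1$, every factor has length $2$ while $|U_j|=|V_j|+|W_j|-1\geq 3$, so no factor can equal a $U_j$ and indeed $\supp(Z)=\{2\}$. Without this construction, Part 1 of the lemma---and with it Theorem \ref{4.1}---remains unproved.
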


\begin{proof}
For $i \in \{1,2\}$, let $\pi_i \colon G = G_1 \oplus G_2 \to G_i$ denote the canonical projection.

1.  First suppose $\vp_3(X)=r\geq 1$ and $\vp_3(Y)=s\geq 1$. Let $V_1,\,V_2,\,V_3\in \mathcal A(G_1)$ and $W_1,\,W_2,\,W_3\in \mathcal A(G_2)$ be atoms having weakly reduced factorizations
$$V_1V_2V_3=X_1\cdot\ldots\cdot X_{\rho_1}\quad\und\quad W_1W_2W_3=Y_1\cdot\ldots\cdot Y_{\rho_2}$$ with the $X_i\in \mathcal A(G_1)$, the $Y_i\in\mathcal A(G_2)$, \ $\rho_i\geq \omega_i$ for $i\in [1,2]$, and $X_i$ and $Y_j$ traversals in their respective factorizations for $i\in [1,r]$ and $j\in [1,s]$.  In particular, $|X_i|=|Y_j|=2$ for $i\geq r+1$ and $j\geq s+1$, \ $X_1=a_1a_2a_3$ with $a_i\in \supp(V_i)$ for $i\in [1,3]$, and  $Y_1=b_1b_2b_3$ with $b_i\in \supp(W_i)$ for $i\in [1,3]$.
 Let $V'_i=V_ia_i^{-1}$ and $W'_i=W_ib_i^{-1}$ for $i\in [1,3]$.
Now we define
\[
\begin{aligned}
U_1 & =  V'_1W'_1(a_1+b_1)=(V_1a_1^{-1})(W_1b_1^{-1})(a_1+b_1), \\
U_2 & =  V'_2W'_2(a_2+b_2)=(V_2a_2^{-1})(W_2b_2^{-1})(a_2+b_2) \quad \text{and} \\
U_3 & =  V'_3W'_3(a_3+b_3)=(V_3a_3^{-1})(W_3b_3^{-1})(a_3+b_3).
\end{aligned}
\] It is easily observed that the $U_i$ are atoms. Moreover, $V'_1V'_2V'_3=(V_1V_2V_3)X_1^{-1}=X_2\cdot\ldots\cdot X_{\rho_1}$ and
  $W'_1W'_2W'_3=(W_1W_2W_3)Y_1^{-1}=Y_2\cdot\ldots\cdot Y_{\rho_2}$. Thus  $U_1U_2U_3=X_2\cdot\ldots X_{\rho_1}Y_2\cdot\ldots\cdot Y_{\rho_2}W$ with $W=(a_1+b_1)(a_2+b_2)(a_3+b_3)$ a traversal in view of $a_1+a_2+a_3+b_1+b_2+b_3=\sigma(X_1)+\sigma(Y_1)=0$. Moreover, $X_2,\ldots,X_s,Y_2,\ldots,Y_r$  also remain traversals in this factorization, while no $X_i$ nor $Y_j$ with $i\geq r+1$ or $j\geq s+1$ can be a traversal in view of $|X_i|=|Y_j|=2$. Thus  $\rho_3(G)\geq \rho_1+\rho_2-1\geq \omega_1+\omega_2-1$ holds with spread $Z$ having $\vp_3(Z)=\vp_3(X)+\vp_3(Y)-1>0$, ensuring $Z\in \Fc(\{2,3\})$ (noted before Lemma \ref{4.4}).

\bigskip

Next suppose that either $\vp_3(X)>0=\vp_3(Y)$ or $\vp_3(Y)>0=\vp_3(X)$, say w.l.o.g. the former, so $$\vp_3(X)=r>0\quad\und\quad \supp(Y)=\{2\},$$  the latter in view of $Y\in \Fc(\{2,3\})$.
  Let $V_1,\,V_2,\,V_3\in \mathcal A(G_1)$ and $W_1,\,W_2,\,W_3\in \mathcal A(G_2)$ be atoms having weakly reduced factorizations
$$V_1V_2V_3=X_1\cdot\ldots\cdot X_{\rho_1}\quad\und\quad W_1W_2W_3=Y_1\cdot\ldots\cdot Y_{\rho_2}$$ with the $X_i\in \mathcal A(G_1)$, the $Y_i\in\mathcal A(G_2)$, \ $\rho_i\geq \omega_i$ for $i\in [1,2]$, the  $X_i$ with $i\in [1,r]$  traversals in their factorization, and \ $|Y_i|=2$ and $|X_j|=2$ for all $i\in [1,\rho_2]$ and $j\geq r+1$. In particular,
 $X_1=a_1a_2 a_3$ with $a_i\in \supp(V_i)$ for $i\in [1,3]$.
Since  $1\notin \supp(XY)$,  we have $0\notin\supp(V_1V_2V_3W_1W_2W_3)$ implying $|V_i|,\,|W_i|\geq 2$  for all $i\in [1,3]$.
Also, as discussed before Lemma \ref{4.4}, there must be
 a length two subsequence $xy\mid W_1$ with $-x\mid W_2$ and $-y\mid W_3$.
 Now we define
\[
\begin{aligned}
U_1 & =  (V_1a_1^{-1}) (W_1x^{-1}y^{-1}) (x-a_2)(y+a_1+a_2) \,, \\
U_2 & =  (V_2a_2^{-1})(W_2(-x)^{-1})(a_2-x)\quad \text{and} \\
U_3 & =  (V_3a_3^{-1})(W_3(-y)^{-1})(a_3-y) \,.
\end{aligned}
\]
Obviously, we have $U_2, U_3 \in \mathcal A (G)$ and $U_1\in \mathcal B(G)$. Letting $S = U_1(y+a_1+a_2)^{-1}$ and considering $\pi_2 (S)$ and $\pi_1 (S)$ shows that  $S$ is zero-sum free, implying that  $U_1 \in \mathcal A (G)$. Since $a_1+a_2+a_3=\sigma(X_1)=0$, we have $(y+a_1+a_2)+(a_3-y)=0$. Thus, since $(V_1a_1^{-1})(V_2a_2^{-1})(V_3a_3^{-1})=(V_1V_2V_3)X_1^{-1}=X_2\cdot\ldots\cdot X_{\rho_1}$ and since $W_1W_2W_3$ has a  factorization into $\rho_2$  atoms of length $2$, it is now clear that $U_1U_2U_3$ has a factorization using $(\rho_1-1)+(\rho_2-2)+2\geq \omega_1+\omega_2-1$ atoms, say
\be\label{daisy}U_1U_2U_3=X_2\cdot\ldots\cdot X_{\rho_1}Z_1\cdot\ldots\cdot Z_{\rho_2}\ee  with $|Z_i|=2$ for all $i$. Hence $\rho_3(G)\geq \rho_1+\rho_2-1\geq \omega_1+\omega_2-1$.
Moreover, each $X_i$ with $i\in [2,r]$ remains a traversal for \eqref{daisy}, while this cannot be the case for $X_j$ with $j\geq r+1$ nor any $Z_i$ as $|X_j|=|Z_i|=2$ for $j\geq s+1$ and all $i$. Thus \eqref{daisy} has a spread $Z$ with $$\vp_3(Z)=\vp_3(X)-1=\vp_3(X)+\vp_3(Y)-1.$$ If $\vp_3(X)\geq 2$, this shows $3\in \supp(Z)$, whence $Z\in \Fc(\{2,3\})$ as discussed before Lemma \ref{4.4}. On the other hand, if $\vp_3(X)=1$, then all atoms in the factorization \eqref{daisy} have length $2$.  Thus, since $|U_j|=|V_j|+|W_j|-1\geq 3$ for all $j\in [1,3]$, we see that none of these atoms of length two can equal some $U_j$, meaning $1\notin \supp(Z)$ for any spread $Z$ for \eqref{daisy}, also explained above Lemma \ref{4.4}. In this case, $\supp(Z)=\{2\}$, completing the proof of Part 1.

\bigskip

2. Suppose $\supp(X)=\supp(Y)=\{2\}$.
Let $V_1,\,V_2,\,V_3\in \mathcal A(G_1)$  be atoms such that $V_1V_2V_3$ has a weakly reduced factorization into $\rho_1\geq \omega_1$ atoms of length $2$ and let  $W_1,\,W_2,\,W_3\in \mathcal A(G_2)$  be atoms  such that $W_1W_2W_3$ has a weakly reduced factorization into $\rho_2\geq \omega_2$ atoms of length $2$. As explained before Lemma \ref{4.4}, we may assume there is a length $2$ subsequence  $xy\mid W_1$ with $-x\mid W_2$ and $-y\mid W_3$ and a length $2$ subsequence $ab\mid V_2$ with $-a\mid V_1$ and $-b\mid V_3$. Now we define
\[
\begin{aligned}
U_1 & =  (V_1(-a)^{-1}) (W_1x^{-1}y^{-1}) (x-a)(y) \,, \\
U_2 & =  (V_2a^{-1}b^{-1})(W_2(-x)^{-1})(a-x)(b)\quad \text{and} \\
U_3 & =  (V_3(-b)^{-1})(W_3(-y)^{-1})(-b-y) \,.
\end{aligned}
\]
Obviously, we have $U_3 \in \mathcal A (G)$ and $U_1,\,U_2\in \mathcal B(G)$. Letting $S = U_1y^{-1}$ and considering $\pi_2 (S)$ and $\pi_1 (S)$ shows that  $S$ is zero-sum free, implying that  $U_1 \in \mathcal A (G)$. Likewise, letting $T=U_2b^{-1}$ and considering $\pi_1 (T)$ and $\pi_2 (T)$ shows that  $T$ is zero-sum free, implying that  $U_2 \in \mathcal A (G)$. Let $c_1=y$, $c_2=b$ and $c_3=-b-y$. Since $V_1V_2V_3$ and $W_1W_2W_3$ both have factorizations into  atoms of length $2$, it is now clear that $(U_1c_1^{-1})(U_2c_2^{-1})(U_3c_3^{-1})$ has a factorization into $(\rho_1-2)+(\rho_2-2)+1=\rho_1+\rho_2-3$ atoms of length $2$, which together with the unique traversal $c_1c_2c_3$ gives a factorization of $U_1U_2U_3$ into $\rho_1+\rho_2-2$ atoms,  showing that $\rho_3(G)\geq \rho_1+\rho_2-2\geq \omega_1+\omega_2-2$ holds with spread $Z$ having $\vp_3(Z)=1$, ensuring $Z\in \Fc(\{2,3\})$ (noted before Lemma \ref{4.4}).
\end{proof}

\medskip
\begin{lemma} \label{4.6}
Let $G = C_n^3$ with $n \ge 2$. Then $\rho_3(G)\geq \mathsf D^*(G)+\lfloor\frac{\mathsf D^*(G)}{2}\rfloor$ with spread $X\in \Fc(\{2,3\})$. Moreover,  $\vp_3(X)=1$ if $\mathsf D^*(G)$ is odd, and $\supp(X)=\{2\}$ if $\mathsf D^*(G)$ is even.
\end{lemma}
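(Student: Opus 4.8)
The plan is to prove the bound by an explicit construction that attains it. Writing $G=C_n^3$ we have $\mathsf D^*(G)=3(n-1)+1=3n-2$, and $\mathsf D^*(G)+\lfloor \mathsf D^*(G)/2\rfloor=\lfloor \tfrac32\mathsf D^*(G)\rfloor$ is exactly the largest number of atoms into which a product of three atoms of length $\mathsf D^*(G)$ can split: the total length is $3\mathsf D^*(G)$ and each factor has length at least $2$, so there can be at most $\lfloor\tfrac32\mathsf D^*(G)\rfloor$ of them, with equality forcing every factor to have length $2$ when $\mathsf D^*(G)$ is even, and all but one of length $2$ together with a single length-$3$ factor when $\mathsf D^*(G)$ is odd. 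Hence it suffices to construct atoms $U_1,U_2,U_3\in\mathcal A(G)$, each of length $\mathsf D^*(G)=3n-2$ (such atoms exist, e.g.\ $e_1^{n-1}e_2^{n-1}e_3^{n-1}(e_1+e_2+e_3)$), admitting such an extremal factorization; the length pattern of the $W_i$ then yields the claimed spread directly, with $\supp(X)=\{2\}$ for $\mathsf D^*(G)$ even and $\vp_3(X)=1$ for $\mathsf D^*(G)$ odd.

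The structural observation organizing the search is that an atom of length greater than $2$ cannot contain both $g$ and $-g$, so in the desired factorization each length-$2$ atom $g(-g)$ must draw its two terms from two different $U_j$. I would therefore seek the three atoms in the paired shape $U_1=AB$, $U_2=(-A)C$, $U_3=(-B)(-C)$ when $\mathsf D^*(G)$ is even, with $A,B,C$ sequences of common length $\tfrac12\mathsf D^*(G)$; then
\[
U_1U_2U_3=\big(A\,(-A)\big)\,\big(B\,(-B)\big)\,\big(C\,(-C)\big)
\]
factors automatically into $\tfrac32\mathsf D^*(G)$ atoms of length $2$, each crossing two of the $U_j$, forcing $1\notin\supp(X)$ and $\supp(X)=\{2\}$. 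When $\mathsf D^*(G)$ is odd I would instead append one extra term $t_j$ to each $U_j$ with $t_1+t_2+t_3=0$, so that $t_1t_2t_3$ is the unique traversal and $\vp_3(X)=1$. In either case the requirement $\sigma(U_1)=\sigma(U_2)=\sigma(U_3)=0$ reduces to easily satisfiable linear relations among $\sigma(A),\sigma(B),\sigma(C)$ (and the $t_j$).

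It then remains only to choose $A,B,C$ explicitly so that $U_1,U_2,U_3$ are genuinely minimal. I would build them, as in the rank-two construction underlying Lemma \ref{4.4} and the extremal atoms of Proposition \ref{3.5}, from the vectors $e_1,e_2,e_3$, the diagonal $e_1+e_2+e_3$, and the differences $e_i-e_j$, with multiplicities tuned to match both the length $\tfrac12\mathsf D^*(G)$ and the sum conditions; the differences are precisely what keep a term and its negative out of the same $U_j$. The main obstacle is the verification of minimality: because the sequences carry many repeated and diagonal terms, one must exclude every proper nonempty zero-sum subsequence, and --- just as the chain $(-e_1)^{n-1}(e_1-e_2)^{n-1}(-e_2)$ in Lemma \ref{4.4} already shows --- this requires arranging the difference-chains so that any vanishing subsum is forced to use all terms. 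I expect this check, carried out separately for $n$ even and $n$ odd (the two parities of $\mathsf D^*(G)=3n-2$), to be the only substantive work. Finally, I note that a direct construction is genuinely necessary: combining Lemmas \ref{4.4} and \ref{4.5} through the only available splitting $C_n^3=C_n^2\oplus C_n$ involves the cyclic factor $C_n$, whose value $\rho_3(C_n)=n+1$ is too small, and yields merely $\rho_3(C_n^3)\geq 4n-2$, which falls short of the target once $n\geq 4$.
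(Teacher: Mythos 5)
Your framework is sound and coincides with the paper's strategy: the value $\mathsf D^*(G)+\lfloor\mathsf D^*(G)/2\rfloor$ is realized exactly by three atoms of length $\mathsf D^*(G)=3n-2$ whose product factors into length-$2$ atoms, together with a single traversal $t_1t_2t_3$ when $\mathsf D^*(G)$ is odd; the paired shape $U_1=AB$, $U_2=(-A)C$, $U_3=(-B)(-C)$ is precisely the shape of the paper's construction; and your closing observation that splitting $C_n^3=C_n^2\oplus C_n$ and invoking Lemmas \ref{4.4} and \ref{4.5} yields only about $4n-2$, which is short of the target for $n\ge 4$, is correct and shows why a direct construction is unavoidable. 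The problem is that you never carry out that construction: you do not exhibit $A$, $B$, $C$ or the traversal terms for either parity of $n$, and you do not verify that any candidate triple consists of atoms. What you describe as ``the only substantive work'' is in fact essentially the entire content of the lemma, so the proposal has a genuine gap rather than a proof.

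Filling this gap is not a routine tuning of multiplicities, and your ingredient list is not quite the right one. For $n$ odd the paper takes $U_1=e_1^{n-1}e_2^{n-1}(e_1+e_2+e_3)^{n-1}c_1$ with $c_1=2e_1+2e_2+e_3$, \ $U_2=(-e_1)^{n-1}e_3^{n-1}(-e_1-e_2-e_3)^{(n-1)/2}(e_1-e_2)^{(n-1)/2}c_2$ and $U_3=(-e_3)^{n-1}(-e_2)^{n-1}(-e_1-e_2-e_3)^{(n-1)/2}(-e_1+e_2)^{(n-1)/2}c_3$, where $c_2,c_3$ are chosen so that each $U_i$ has sum zero and $c_1+c_2+c_3=0$; for $n$ even, terms such as $e_1-e_2+e_3$ and $2e_1+e_2+e_3$ replace the bare differences and the traversal. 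Two features of this are essential and absent from your sketch: first, the diagonal block $(e_1+e_2+e_3)^{n-1}$ of $U_1$ must have its negatives \emph{split} between $U_2$ and $U_3$, so $A$, $B$, $C$ share support rather than being drawn from separate ingredient pools; second, corrective terms like $c_1=2e_1+2e_2+e_3$, which are not among your listed generators, are needed to restore the zero-sum condition while keeping each length equal to $3n-2$ and, crucially, keeping each $U_i$ minimal --- minimality being verified in the paper by showing that $U_ic_i^{-1}$ (resp.\ $U_i$) is zero-sum free via the coordinate projections $\pi_j$. Until you produce explicit sequences for both parities and carry out such a minimality check, the lemma is not proved.
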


\begin{proof}
Let $\{e_1, e_2, e_3\}$ be a basis of $G$ and for $i \in [1,3]$, let $\pi_i \colon G = \langle e_1 \rangle \oplus \langle e_2 \rangle
\oplus \langle e_3 \rangle \to \langle e_i \rangle$ denote the canonical projection. Note that $\mathsf D^*(G)=3n-2\equiv n\mod 2$.  We handle two cases.

\smallskip
\noindent
CASE 1: \ $n$ is odd.

Then $\mathsf D^* (G) = 3n-2 \ge 7$ is odd.  We define
\[
\begin{aligned}
U_1 & = e_1^{n-1} e_2^{n-1} (e_1+e_2+e_3)^{n-1} c_1  \quad \text{with} \quad c_1 = 2e_1+2e_2+e_3  \,, \\
U_2 & = (-e_1)^{n-1}e_3^{n-1}(-e_1-e_2-e_3)^{\frac{n-1}{2}} (e_1-e_2)^{\frac{n-1}{2}} c_2  \quad \text{with} \quad c_2 = -e_1-e_2+\frac{n+1}{2}e_3 \quad \text{and}  \\
U_3 & = (-e_3)^{n-1}(-e_2)^{n-1}(-e_1-e_2-e_3)^{\frac{n-1}{2}} (-e_1+e_2)^{\frac{n-1}{2}} c_3  \quad \text{with} \quad c_3 = -e_1-e_2+\frac{n-3}{2}e_3 \,.
\end{aligned}
\]
Clearly, $U_i\in \mathcal B(G)$ for $i\in [1,3]$. Considering $\pi_3 (U_1c_1^{-1})$, $\pi_2 (U_2c_2^{-1})$ and $\pi_1 (U_3c_3^{-1})$, we infer that  the sequences $U_ic_i^{-1}$ are zero-sum free for every $i \in [1,3]$. Therefore, we have $U_1, U_2, U_3 \in \mathcal A (G)$, and it is now easily seen that $(U_1c_1^{-1})(U_2c_2^{-1})(U_3c_3^{-1})$ has a factorization into atoms of length $2$, which together  with the unique traversal $c_1c_2c_3$ shows that  $\rho_3(G)\geq 1+\frac{9n-9}{2}=\mathsf D^*(G)+\lfloor\frac{\mathsf D^*(G)}{2}\rfloor$ holds with spread $X$ having $\vp_3(X)=1$, so that $X\in \Fc(\{2,3\})$ as noted before Lemma \ref{4.4}.

\smallskip
\noindent
CASE 2: \ $n$ is even.

Then $\mathsf D^* (G) = 3n-2 \ge 4$ is even. We define
\[
\begin{aligned}
U_1 & = e_1^{n-1} e_2^{n-1} (e_1+e_2+e_3)^{n-2}(2e_1+e_2+e_3)(e_1+2e_2+e_3) \,, \\
U_2 & = (-e_1)^{n-1} e_3^{n-1}(-e_1-e_2-e_3)^{n/2-1}(e_1-e_2+e_3)^{n/2-1}(-2e_1-e_2-e_3)(e_1-e_2+2e_3) \quad \text{and} \\
U_3 & = (-e_3)^{n-1}(-e_2)^{n-1}(-e_1-e_2-e_3)^{n/2-1}(-e_1+e_2-e_3)^{n/2-1}(-e_1-2e_2-e_3)(-e_1+e_2-2e_3)  \,.
\end{aligned}
\]
Clearly, $U_i\in \mathcal B(G)$ for $i\in [1,3]$.
Considering $\pi_3 (U_1)$, $\pi_2 (U_2)$ and $\pi_1 (U_3)$, we infer that $U_1, U_2, U_3 \in \mathcal A (G)$. By construction, $U_1U_2U_3$
has a factorization into atoms of length $2$, say $U_1U_2U_3=Z_1\cdot\ldots\cdot Z_{\frac12 |U_1U_2U_3|}$, implying that
$\rho_3(G) \geq
\frac{1}{2} |U_1U_2U_3| = \frac{3(3n-2)}{2}=\mathsf D^*(G)+\lfloor\frac{ \mathsf D^*(G)}{2}\rfloor$. Moreover, since $|U_i|=3n-2>2=|Z_j|$ for all $i$ and $j$, we see that $1\notin \supp(X)$ in any spread $X$, whence $\supp(X)=\{2\}$, completing the proof.
\end{proof}

\medskip
\begin{proof}[Proof of Theorem \ref{4.1}]
By Proposition \ref{2.1}, we have $\rho_k (H) = \rho_k (G)$ for all $k\geq 1$.
By Lemma \ref{3.1}.1 and Lemma \ref{3.1}.2, it suffices to prove the assertion for $k=1$.
By hypothesis,  $G$ can be written in the form
\[
G =  C_{m_1}^{t_1} \oplus \ldots \oplus C_{m_{\alpha}}^{t_{\alpha}} \,,
\]
where $\{m_1, \ldots, m_{\alpha} \} = \{n_1, \ldots, n_r\}$ with $t_i \in \{2,3\}$.
We proceed by induction on $\alpha$ to show that $\rho_3(G)\geq \mathsf D^*(G)+\lfloor\frac{\mathsf D^*(G)}{2}\rfloor$ holds with spread $X\in \Fc(\{2,3\})$ with $\vp_3(X)=1$ when $\mathsf D^*(G)$ is odd and with $\supp(X)=\{2\}$ when $\mathsf D^*(G)$ is even, which will complete the proof.

Since $n_1\mid\ldots \mid n_r$ with $\{m_1, \ldots, m_{\alpha} \} = \{n_1, \ldots, n_r\}$, we have  \be\nn\mathsf D^*(G)=\mathsf d^*(C_{m_1}^{t_1})+\mathsf d^*(C_{m_2}^{t_2} \oplus \ldots \oplus C_{m_{\alpha}}^{t_{\alpha}})+1=\mathsf D^*(K)+\mathsf D^*(L)-1,\ee where $K=C_{m_1}^{t_1}$ and $L= C_{m_2}^{t_2} \oplus \ldots \oplus C_{m_{\alpha}}^{t_{\alpha}}$. If $t_1=2$, then $\mathsf D^*(K)=\mathsf D^*(C_{m_1}^2)=2m_1-1$ is odd and Lemma \ref{4.4} implies that $\rho_3(K)=\rho_3(C_{m_1}^{2})\geq 3m_1-2=(2m_1-1)+\lfloor\frac{2m_1-1}{2}\rfloor=\mathsf D^*(K)+\lfloor\frac{\mathsf D^*(K)}{2}\rfloor$ with spread $X$ having $\vp_3(X)=1$, so that $X\in \Fc(\{2,3\})$.
  If $t_1=3$, then  Lemma \ref{4.6} implies that $\rho_3(K)=\rho_3(C_{m_1}^{3})\geq \mathsf D^*(K)+\lfloor\frac{\mathsf D^*(K)}{2}\rfloor$ with spread $X\in\Fc( \{2,3\})$. Moreover, $\vp_3(X)=1$ if $\mathsf D^*(K)$ is odd, and $\supp(X)=\{2\}$ if $\mathsf D^*(K)$ is even. This completes the base case when $\alpha=1$. Thus we may assume $\alpha\geq 2$, in which case the induction hypothesis ensures that $$\rho_3(K)\geq \mathsf D^*(K)+\lfloor\frac{\mathsf D^*(K)}{2}\rfloor\quad\und\quad \rho_3(L)\geq \mathsf D^*(L)+\lfloor\frac{\mathsf D^*(L)}{2}\rfloor$$ with respective spreads $X,\,Y\in \Fc(\{2,3\})$.

  If $\mathsf D^*(K)$ and $\mathsf D^*(L)$ are both even, then $\mathsf D^*(G)=\mathsf D^*(K)+\mathsf D^*(L)-1$ is odd and $\supp(X)=\supp(Y)=\{2\}$, whence  Lemma \ref{4.5}.2  yields $$\rho_3(G)\geq \mathsf D^*(K)+\frac{\mathsf D^*(K)}{2}+ \mathsf D^*(L)+\frac{\mathsf D^*(L)}{2}-2=\mathsf D^*(G)+\frac{\mathsf D^*(G)-1}{2}$$ with spread $Z\in \Fc(\{2,3\})$ having $\vp_3(Z)=1$, as desired. If $\mathsf D^*(K)$ and $\mathsf D^*(L)$ are both odd, then $\vp_3(X)=\vp_3(Y)=1$ and $\mathsf D^*(G)=\mathsf D^*(K)+\mathsf D^*(L)-1$ is odd, whence  Lemma \ref{4.5}.1  yields $$\rho_3(G)\geq \mathsf D^*(K)+\frac{\mathsf D^*(K)-1}{2}+ \mathsf D^*(L)+\frac{\mathsf D^*(L)-1}{2}-1=\mathsf D^*(G)+\frac{\mathsf D^*(G)-1}{2}$$ with spread $Z\in \Fc(\{2,3\})$ having $\vp_3(Z)=\vp_3(X)+\vp_3(Y)-1=1$, as desired. Finally, if $\mathsf D^*(K)$ and $\mathsf D^*(L)$ have different parities, then $\vp_3(X)+\vp_3(Y)=1$, \ $\mathsf D^*(G)=\mathsf D^*(K)+\mathsf D^*(L)-1$ is even, and    Lemma \ref{4.5}.1  yields $$\rho_3(G)\geq \mathsf D^*(K)+\frac{\mathsf D^*(K)}{2}+ \mathsf D^*(L)+\frac{\mathsf D^*(L)}{2}-\frac12-1=\mathsf D^*(G)+\frac{\mathsf D^*(G)}{2}$$ with spread $Z\in \Fc(\{2,3\})$ having $\vp_3(Z)=\vp_3(X)+\vp_3(Y)-1=0$, forcing $\supp(Z)=\{2\}$. This completes the induction. When $\mathsf D^*(G)=\mathsf D(G)$, the needed upper bound comes from Lemma \ref{3.1}.2
\end{proof}

\bigskip
\section{Groups of rank two} \label{5}
\bigskip

The aim of this section is to prove the following characterization. It provides the first non-cyclic groups $G$ at all for which  $\rho_{2k+1}(G)$ is strictly smaller than the upper bound $k \mathsf D (G) + \lfloor \mathsf D (G)/2 \rfloor$ for some $k \in \N$.

\medskip
\begin{theorem} \label{5.1}
Let $H$ be a Krull monoid with finite class group $G$ such that every class contains a prime divisor. Suppose that
$G = C_{m} \oplus C_{mn}$ with $n \geq 1$ and $m \ge 2$. Then
\[
\rho_3 (H) = \mathsf D (G) + \Blfloor \frac{\mathsf D (G)}{2} \Brfloor \qquad \text{if and only if} \qquad
n = 1 \ \ \text{or} \ \ m=n=2 \,.
\]
\end{theorem}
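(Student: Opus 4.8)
The plan is to transfer to zero-sum sequences and then exploit the spread bookkeeping of Section~\ref{4}. By Proposition~\ref{2.1} we have $\rho_3(H)=\rho_3(G)$, and since $G$ has rank two, Proposition~\ref{2.3} gives $\mathsf D(G)=\mathsf D^*(G)=mn+m-1$. Writing a witnessing factorization as $U_1U_2U_3=W_1\cdot\ldots\cdot W_\rho$ with $U_i\in\mathcal A(G)$, I would first record the counting constraint underlying Lemma~\ref{3.1}.2: since $\sum_i|W_i|=|U_1U_2U_3|\le 3\mathsf D(G)$ and each $|W_i|\ge 2$, attaining $\rho=\mathsf D(G)+\Blfloor\frac{\mathsf D(G)}{2}\Brfloor$ forces $|U_1U_2U_3|\ge 3\mathsf D(G)-1$ and $|W_i|\le 3$ for all $i$, with at most one traversal. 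Hence at least two of the $U_i$ are atoms of maximal length $\mathsf D(G)$, the third has length $\mathsf D(G)$ or $\mathsf D(G)-1$, and after passing to a weakly reduced factorization the data amount to a tripartite zero-sum pairing: each length-two atom joins a term of some $U_j$ to its negative in a different $U_{j'}$, together with at most one traversal $c_1c_2c_3$ with $c_j\mid U_j$ and $c_1+c_2+c_3=0$.

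For the \emph{if} direction I would treat the two cases separately. If $n=1$ then $G=C_m^2$ satisfies $\mathsf D(G)=\mathsf D^*(G)$, so Corollary~\ref{4.2} (exponent $2$) gives $\rho_3(G)=\mathsf D(G)+\Blfloor\frac{\mathsf D(G)}{2}\Brfloor$ at once. If $(m,n)=(2,2)$ then $G=C_2\oplus C_4$, $\mathsf D(G)=5$ is odd, and the target is $\rho_3=7$; here I would exhibit an explicit factorization into seven atoms of length two. Every maximal atom of $C_2\oplus C_4$ carries an element of order four with multiplicity three, so (by the balance below) no such factorization can have all three $U_i$ maximal; the construction must use the odd-length ``escape route'' with one atom of length $\mathsf D(G)-1=4$, where the self-inverse order-two elements supply exactly the slack needed.

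For the \emph{only if} direction, assume $n\ge 2$, $(m,n)\ne(2,2)$, and that the bound is attained; I aim for a contradiction. The engine is Main Proposition~\ref{5.4}: after an automorphism, each maximal atom $U_i$ has an explicit normal form exhibiting a distinguished element $g_i$ of maximal order $mn$ with $\vp_{g_i}(U_i)=mn-1$, the remaining $m$ terms being tightly constrained (lying in a single nontrivial coset of $\langle g_i\rangle$). Because the factorization is almost entirely into length-two atoms, its order-$mn$ terms pair off as $\{g,-g\}$; thus, up to the at most three terms of a traversal, the number of occurrences of any order-$mn$ element $g$ equals that of $-g$ across $U_1U_2U_3$. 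Since no atom contains both $g_i$ and $-g_i$, the $mn-1$ copies of $g_i$ in $U_i$ demand at least $mn-1$ copies of $-g_i$ in the \emph{other} two atoms. As each maximal atom has only $m=\mathsf D(G)-(mn-1)$ terms outside its dominant element, a short count shows that either $mn-1\le 2m$, i.e.\ $m(n-2)\le 1$, or two of the dominant elements coincide---in which case $2(mn-1)$ copies of a single element would have to be balanced inside one atom of length $\le\mathsf D(G)<2(mn-1)$, a contradiction. Either way every all-maximal configuration with $n\ge 3$ is excluded.

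The surviving cases are the main obstacle, and they are of two kinds. First, the tight regime $n=2$, where the crude bound $2m-1\le 2m$ holds with no room and the count is by itself indecisive. Second, the escape route with a non-maximal atom of length $\mathsf D(G)-1$, present whenever $\mathsf D(G)=m(n+1)-1$ is odd, whose near-maximal structure is not governed directly by Main Proposition~\ref{5.4}. My plan for both is to feed the precise normal forms into the balance equations while tracking the order-$mn$ coordinate: forcing the $\ge mn-1$ copies of each $-g_i$ into the few admissible slots pins the multiplicities down completely and yields an induced relation such as $(2m-1)(g_1+g_2)=0$, incompatible with the minimality of the atoms unless $m=2$; for the escape route one must first show that the length-$(\mathsf D(G)-1)$ atom still carries a dominant order-$mn$ element of high multiplicity, reducing it to the same balance analysis. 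The upshot is that $(m,n)=(2,2)$ is the only configuration consistent with the bound, completing the equivalence. The genuinely delicate point is exactly this low-$n$, odd-length regime, where counting alone is inconclusive and the contradiction must be drawn from the fine structure of Main Proposition~\ref{5.4}, supplemented by a structural analysis of the almost-maximal atom.
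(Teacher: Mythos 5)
Your reduction steps are sound and coincide with the paper's: the transfer via Proposition \ref{2.1}, the counting argument forcing $|U_1|=|U_2|=\mathsf D(G)$, $|U_3|\ge\mathsf D(G)-1$, all short atoms of length $2$ with at most one traversal (this is the paper's Assertion A), and the ``if'' direction via Corollary \ref{4.2} for $n=1$ together with an explicit construction for $C_2\oplus C_4$ (the paper's Corollary \ref{5.2}.1 supplies one, with $U_3$ of length $4$; you only promise such a construction, but that gap is fillable). The fatal problem is in the ``only if'' direction: your engine rests on a misreading of Main Proposition \ref{5.4}. It is \emph{not} true that every minimal zero-sum sequence of length $\mathsf D(G)$ has a distinguished element $g$ of order $mn$ with $\vp_g(U)=mn-1$; that is the shape of type I(a) only. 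A type I(b) atom has the form $e_1^{m-1}\prod_{i=1}^{mn}(x_ie_1+e_2)$ with $\ord(e_1)=m$ and $\ord(e_2)=mn$: its $mn$ terms of order $mn$ are spread over the coset $e_2+\la e_1\ra$ and need not repeat much at all (for $m=3$, $n=2$, the atom $e_1^2e_2^3(e_1+e_2)^2(2e_1+e_2)$ has no element of multiplicity larger than $3$, while $mn-1=5$); a type II atom has dominant multiplicities $sm-1$ and $(n-s)m+\epsilon$, again not $mn-1$. Consequently your balance count (``$mn-1$ copies of $g_i$ demand $mn-1$ copies of $-g_i$ elsewhere, hence $m(n-2)\le 1$'') does not exclude all-maximal configurations for $n\ge 3$: two type I(b) atoms $U_1$ and $-U_2$ can have their order-$mn$ terms pair off one-to-one without any element of large multiplicity occurring anywhere. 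Ruling out exactly this situation is the paper's CASE 1, and it requires a genuinely different idea: the differences of the paired terms lie in the subgroup $H=\la e_1,f_1\ra$, which embeds in $C_m^2$ so that $\mathsf D(H)\le 2m-1$, and this forces a short nontrivial zero-sum subsequence inside the atom $U_3$, a contradiction.

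What you defer to ``feeding the precise normal forms into the balance equations'' is not a routine verification but the bulk of the theorem: it comprises the paper's Cases 2--6, including all type II interactions, the whole $n=2$ regime, and the technical Lemma \ref{lem-tech} on subsequences of type II atoms summing to $mf_2$. Note also that your plan for the length-$(\mathsf D(G)-1)$ atom (``first show that it still carries a dominant order-$mn$ element of high multiplicity'') is neither established nor needed in the paper's argument: when $|U_3|=\mathsf D(G)-1$, the paper applies Main Proposition \ref{5.4} only to $U_1$ and $-U_2$ (Cases 5 and 6) and extracts every contradiction from the bare fact that $U_3$ is an atom containing $(-B)C$. As it stands, your proposal establishes neither direction completely, and its central structural claim is false, so the argument cannot be repaired by bookkeeping alone.
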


\medskip
We start with two corollaries providing examples of groups $G$ having rank two which show that Theorem \ref{5.1} is sharp in two aspects. Indeed, Corollary \ref{5.2} shows that  these groups $G$ satisfy
\[
\rho_3 (G) = \mathsf D (G) + \Blfloor \frac{\mathsf D (G)}{2} \Brfloor - 1 \quad \text{but} \quad \rho_{2k+1} (G) = k \mathsf D (G) + \Blfloor \frac{\mathsf D (G)}{2} \Brfloor \quad \text{for all} \ k \ge 2 \,.
\]
After that, we deal with groups of the form $G = C_2 \oplus C_{2n}$ where $n \ge 3$.
Since for cyclic groups $G$ we have $\rho_{2k+1}(G)=k\mathsf D(G)+1$ for all $k \ge 1$, groups of the form $C_2 \oplus C_{2n}$ are the canonical first choice for testing Conjecture {\bf C2}. Indeed, we verify  Conjecture {\bf C2} for them and show that there exists an integer $k^* \in \N$ (by Theorem \ref{5.1} we must have $k^* > 1$ for $n>2$)
such that
\[
\rho_{2k+1} (G) = k \mathsf D (G) + \Blfloor \frac{\mathsf D (G)}{2} \Brfloor \quad \text{for all} \ k \ge k^* \,.
\]
Moreover, Corollary \ref{5.3} provides the first example of a group where, for some odd $k \in \N$, strict inequalities hold in the crucial inequality (\ref{crucialinequality}).

\medskip
\begin{corollary} \label{5.2}
Let $G = C_m \oplus C_{2m}$ with $m \ge 2$.
\begin{enumerate}
\item  If $m = 2$, then $\rho_{2k+1} (G) = k\mathsf D(G)+\lfloor \frac{ \mathsf D (G)}{2} \rfloor$ for every $k \ge 1$.

\smallskip
\item  If $m \ge 3$, then $\rho_5 (G) \ge 2 \mathsf D (G) + (m+1)$.

\smallskip
\item If $m \in \{3,4\}$, then
\[
\rho_3 (G) = \mathsf D (G) + \Blfloor \frac{\mathsf D (G)}{2} \Brfloor - 1 \quad \text{and} \quad \rho_{2k+1} (G) = k \mathsf D (G) + \Blfloor \frac{\mathsf D (G)}{2} \Brfloor \quad \text{for all} \ k \ge 2 \,.
\]
\end{enumerate}
\end{corollary}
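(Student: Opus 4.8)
The plan is to dispatch Parts 1 and 3 by assembling results already in hand, and to concentrate the real work in Part 2. Write $G = C_m \oplus C_{2m}$; since $G$ has rank two, Proposition \ref{2.3} gives $\mathsf D(G) = \mathsf D^*(G) = 3m-1$, and one checks that $\lfloor \mathsf D(G)/2\rfloor = m+1$ exactly when $m \in \{3,4\}$. By Proposition \ref{2.1} we may work throughout with $\rho_k(G)$. For Part 1 ($m=2$) we are in the case $m=n=2$ of Theorem \ref{5.1}, which gives $\rho_3(G) = \mathsf D(G)+\lfloor\mathsf D(G)/2\rfloor$; this is condition (a) of Proposition \ref{3.2}.3 with $k^*=1$, and the argument for (a)$\Rightarrow$(b) (which in fact produces the equality for every $k \ge k^*$) propagates it to $\rho_{2k+1}(G) = k\mathsf D(G)+\lfloor\mathsf D(G)/2\rfloor$ for all $k \ge 1$.

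For Part 3, I would first pin down $\rho_3(G)$. The lower bound $\rho_3(G) \ge \mathsf D^*(G)+m = 4m-1$ comes from Proposition \ref{3.5}.1, applicable since $n_1 = m < 2m = n_2$, so $n_s = m$; and $4m-1 = \mathsf D(G)+\lfloor\mathsf D(G)/2\rfloor-1$ when $m \in \{3,4\}$. For the reverse inequality, Theorem \ref{5.1} rules out $\rho_3(G) = \mathsf D(G)+\lfloor\mathsf D(G)/2\rfloor$ (here $n=2$ and $m\neq 2$), which together with the upper bound of Lemma \ref{3.1}.2 forces $\rho_3(G) = \mathsf D(G)+\lfloor\mathsf D(G)/2\rfloor-1$. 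For $k \ge 2$ the upper bound $\rho_{2k+1}(G) \le k\mathsf D(G)+\lfloor\mathsf D(G)/2\rfloor$ is again Lemma \ref{3.1}.2, while Part 2 supplies $\rho_5(G) \ge 2\mathsf D(G)+m+1 = 2\mathsf D(G)+\lfloor\mathsf D(G)/2\rfloor$ for $m \in \{3,4\}$; equality at $k=2$ is then condition (a) of Proposition \ref{3.2}.3 with $k^*=2$, and (a)$\Rightarrow$(b) completes Part 3.

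It remains to prove Part 2, namely $\rho_5(G) \ge 2\mathsf D(G)+m+1 = 7m-1$ for all $m \ge 3$, which is the crux. Note first that the additive estimates of Lemma \ref{3.1} give only $\rho_5(G) \ge \rho_3(G)+\rho_2(G) \ge (4m-1)+(3m-1) = 7m-2$, one short, so a dedicated five-atom construction is unavoidable. I would fix a basis $(e_1,e_2)$ with $\ord e_1 = m$ and $\ord e_2 = 2m$ and exhibit atoms $U_1,\dots,U_5 \in \mathcal A(G)$, each of length close to $\mathsf D(G)$, whose product $U_1\cdots U_5$ is symmetric under negation as a sequence (each term $g$ matched by a term $-g$, with terms of order two in even multiplicity). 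Such a product factors into atoms of length $2$, so if the total length equals $2(7m-1) = 14m-2$ the bound follows; equivalently one may take all five atoms of maximal length $\mathsf D(G)$ and allow $m-3$ traversals of length $3$, since a factorization of a sequence $S$ into atoms of lengths $2$ and $3$ uses $(|S|-t)/2$ atoms when $t$ of them have length $3$. As in Lemmas \ref{4.4}--\ref{4.6} and Proposition \ref{3.5}, minimality of each $U_i$ would be certified by projecting onto $\langle e_1\rangle$ and $\langle e_2\rangle$ and checking zero-sum-freeness of the relevant subsequences.

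The main obstacle is the explicit choice of the $U_i$, and the case $m=3$ shows why it is delicate: there $14m-2 = 40 = 5(3m-1)$, so all five atoms must have maximal length $8$, yet no maximal atom can be symmetric on its own, because a minimal zero-sum sequence of length greater than two cannot contain both $g$ and $-g$. Hence the reflection symmetry of $U_1\cdots U_5$ must be produced jointly across the five atoms rather than by pairing atoms with their negatives, and locating such a balanced system of maximal atoms is the heart of the difficulty. I expect the construction to split according to the parity of $m$ (equivalently of $\mathsf D(G)$), in parallel with the two cases of Lemma \ref{4.6}, the even and odd versions differing in the shifted terms used to close up the zero sums.
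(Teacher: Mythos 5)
Your Part 3 reasoning is sound and essentially identical to the paper's (Proposition~\ref{3.5}.1 for the lower bound on $\rho_3$, Theorem~\ref{5.1} for the upper bound, and Part 2 plus propagation for $k\ge 2$), but the proposal has two genuine gaps.

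First, Part 1 as you argue it is circular. You invoke Theorem~\ref{5.1} in the case $m=n=2$ to get $\rho_3(C_2\oplus C_4)=\mathsf D(G)+\lfloor \mathsf D(G)/2\rfloor$, but the paper's proof of Theorem~\ref{5.1} disposes of exactly this case by citing Corollary~\ref{5.2}.1 --- the statement you are trying to prove. The content of Theorem~\ref{5.1}'s own argument is only the negative direction (that $\rho_3(G)<\mathsf D(G)+\lfloor \mathsf D(G)/2\rfloor$ when $n\ge 2$ and $(m,n)\ne (2,2)$); the positive cases $n=1$ and $m=n=2$ are inputs to that theorem, not outputs. The paper proves Part 1 by an explicit construction: $U_1=e_1e_2(e_1+e_2)^3$, $U_2=e_1(-e_2)^3(e_1-e_2)$, $U_3=e_2^2(e_1-e_2)^2$, whose product visibly factors into $7$ atoms of length two, after which Lemma~\ref{3.1}.3 (or your Proposition~\ref{3.2}.3 route) propagates the equality to all $k\ge 1$. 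Your gap is repairable inside your own framework --- Proposition~\ref{3.5}.1 with $n_s=2$ already gives $\rho_3(C_2\oplus C_4)\ge \mathsf D^*(G)+2=7=\mathsf D(G)+\lfloor \mathsf D(G)/2\rfloor$ --- but the citation as written is not a proof.

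Second, and more seriously, Part 2 --- which you yourself call the crux, and on which your Part 3 depends --- is never proved. You correctly observe that the additive estimates of Lemma~\ref{3.1} top out at $7m-2$, and that one needs an explicit five-atom product of total length $14m-2$ which is symmetric under negation; but you stop at this numerology and an expectation that the construction will split by parity. The paper exhibits the atoms: with $\ord(e_1)=m$, $\ord(e_2)=2m$, take
$U_1=e_1^{m-1}e_2^{2m-1}(e_1+e_2)$, \ $U_2=(-e_1)^{m-1}e_2^{2m-1}(-e_1+e_2)$, \ $U_3=(e_1+e_2)^{m-1}(-e_2)^{2m-1}(e_1+me_2)$, \ $U_4=(-e_1-e_2)^{2m-1}(-e_1+me_2)^2(e_1-e_2)$,
and consider $U_1U_2U_3^2U_4$ (note the repeated atom $U_3$, and note $|U_4|=2m+2$, which is \emph{not} of maximal length when $m>3$). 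Since $\ord(e_2)=2m$ gives $-(e_1+me_2)=-e_1+me_2$, this product is symmetric term by term, hence factors into $(14m-2)/2=7m-1=2\mathsf D(G)+(m+1)$ atoms of length two. This single construction is uniform in $m\ge 3$, with no parity split, and it also contradicts your working hypothesis that all five atoms should have maximal length (that is forced only at $m=3$, where $2m+2=3m-1$). Without some such construction, Parts 2 and 3 of your proposal remain unestablished.
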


\begin{proof}
Let $\{e_1, e_2\}$ be a basis of $G$ with $\ord (e_1) = m$ and $\ord (e_2) = 2m$. Then $\mathsf D (G) = 3m-1$.

\smallskip
1. We define
\[
U_1 = e_1e_2(e_1+e_2)^3 \,,  \quad U_2 = e_1(-e_2)^3(e_1-e_2) \quad \text{and} \quad U_3 = e_2^2(e_1-e_2)^2 \,.
\]
Obviously, $U_1U_2U_3$ may be written as a product of $7$ which implies that $\rho_3 (G) = \mathsf D(G)+\lfloor \frac{ \mathsf D (G)}{2} \rfloor$. Now the assertion  follows from  Lemma \ref{3.1}.3.

\smallskip
2. We define
\begin{align*} & U_1 = e_1^{m-1}e_2^{2m-1}(e_1+e_2),  && U_2 = (-e_1)^{m-1}e_2^{2m-1}(-e_1+e_2) \\
& U_3  = (e_1+e_2)^{m-1}(-e_2)^{2m-1}(e_1+me_2) \quad \und \quad  && U_4 = (-e_1-e_2)^{2m-1}(-e_1+me_2)^2 (e_1-e_2).\end{align*}
Then $U_1, U_2, U_3, U_4 \in \mathcal A (G)$ (note we need $m\geq 3$ to ensure $U_4\in \mathcal A(G)$), $|U_1| = |U_2| = |U_3| = \mathsf D (G)$ and $|U_4| = 2m+2$). By construction, $U_1 U_2 U_3^2 U_4$
has a factorization into atoms of length $2$, which implies that
\[
\rho_5(G)\geq  \frac{|U_1 U_2 U_3^2 U_4|}{2} = 2 \mathsf D (G) + (m+1) \,.
\]

\smallskip
3. Proposition \ref{3.5}.1 and Theorem \ref{5.1} imply that
\[
\mathsf D (G) + m \le \rho_3 (G) \le \mathsf D (G) + \Blfloor \frac{\mathsf D (G)}{2} \Brfloor - 1 \,,
\]
which is an equality because $m \in \{3,4\}$. By Lemma \ref{3.1}.3, it suffices to show that
\[
\rho_5 (G) \ge 2 \mathsf D (G) + \Blfloor \frac{\mathsf D (G)}{2} \Brfloor \,,
\]
which follows from  2. above because $m \in \{3,4\}$ ensures $\Blfloor \frac{\mathsf D (G)}{2} \Brfloor=m+1$.
\end{proof}

\medskip
\begin{corollary} \label{5.3}
Let $G = C_2 \oplus C_{2n}$ with $n \ge 3$. Then
\[
\mathsf D(G)+1 < \rho_3 (G) < \mathsf D(G)+ \Blfloor \frac{\mathsf  D (G)}{2} \Brfloor \quad \text{and} \quad
\rho_{2k+1} (G) = k \mathsf D (G) + \Blfloor \frac{\mathsf  D (G)}{2} \Brfloor \quad \text{for every} \  k\geq 2n-1 \,.
\]
\end{corollary}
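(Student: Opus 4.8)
The plan is to treat the double inequality for $\rho_3(G)$ and the asymptotic equality separately, in both cases leveraging the structural results already established. Since $G = C_2 \oplus C_{2n}$ has rank two, Proposition \ref{2.3} gives $\mathsf D(G) = \mathsf D^*(G) = 2n+1$ and hence $\Blfloor \frac{\mathsf D(G)}{2} \Brfloor = n$. For the upper bound $\rho_3(G) < \mathsf D(G) + n$ I would apply Theorem \ref{5.1} with $m = 2$: as $n \ge 3$, neither $n = 1$ nor $m = n = 2$ holds, so $\rho_3(G) \ne \mathsf D(G) + n$, and combining this with the crucial inequality \eqref{crucialinequality} at $k = 1$ (which yields $\rho_3(G) \le \mathsf D(G) + n$) forces strict inequality. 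For the lower bound $\rho_3(G) > \mathsf D(G) + 1$ I would invoke Proposition \ref{3.5}.1 with $r = 2$, $n_1 = 2$, $n_2 = 2n$, and $s = 1$ (admissible since $n_1 = 2 < 2n = n_2$), obtaining $\rho_3(G) \ge \mathsf D^*(G) + n_1 = \mathsf D(G) + 2$; as $n \ge 3$ gives $2 < n$, both bounds are strict and mutually consistent.

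For the asymptotic statement the upper bound $\rho_{2k+1}(G) \le k\mathsf D(G) + n$ is again immediate from \eqref{crucialinequality}, so the whole content lies in the matching lower bound. My first move would be to reduce the family of inequalities to a single base case: by Lemma \ref{3.1}.3, once the bound $\rho_{4n-1}(G) \ge (2n-1)\mathsf D(G) + n$ is secured, then for every $l \ge 2n-1$ one gets $\rho_{2l+1}(G) \ge (2n-1)\mathsf D(G) + n + \bigl(l - (2n-1)\bigr)\mathsf D(G) = l\mathsf D(G) + n$, which together with the upper bound yields equality for all $k \ge 2n-1$. Thus everything comes down to proving $\rho_{4n-1}(G) \ge (2n-1)\mathsf D(G) + n$.

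This base case is the heart of the argument and the step I expect to be by far the hardest. Explicitly, one wants $4n-1$ atoms $V_1, \ldots, V_{4n-1} \in \mathcal A(G)$, each of maximal length $|V_i| = \mathsf D(G) = 2n+1$, such that the product $S = V_1 \cdot \ldots \cdot V_{4n-1}$ admits a second factorization into $4n^2 + n - 1 = (2n-1)\mathsf D(G) + n$ atoms. A length count shows this second factorization is forced to consist of exactly $4n^2 + n - 2$ atoms of length $2$ together with a single length-$3$ atom; equivalently, after deleting the three terms $\{a,b,c\}$ of a single traversal (a length-$3$ atom of nonzero elements with $a+b+c=0$), the term multiset of $S$ must be invariant under negation, so that the remaining terms split into pairs $h(-h)$. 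The crucial obstruction---and the reason the naive choice $k^* = 1$ fails---is precisely what Theorem \ref{5.1} records: for $C_2 \oplus C_{2n}$ with $n \ge 3$ the full defect $\Blfloor \frac{\mathsf D(G)}{2} \Brfloor$ cannot be realized within a single triple of atoms, so a localized Lemma \ref{4.4}-type traversal is unavailable and the defect $n$ must instead be accumulated globally, with all $4n-1$ atoms participating in one connected factorization.

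To carry this out I would proceed in two stages. First I would prescribe the multiset of $S$ as a negation-symmetric family of $\pm$ pairs together with one zero-sum triple, built from the order-$2$ generator $e_1$, the order-$2n$ generator $e_2$, and auxiliary elements such as $e_1 + e_2$; the symmetric part guarantees the long factorization into length-$2$ atoms and the triple supplies the unique traversal. Second, and this is the genuine difficulty, I would exhibit a partition of this multiset into $4n-1$ blocks each of length $2n+1$ and certify, via the characterization of maximal minimal zero-sum sequences in rank-two groups (Main Proposition \ref{5.4}), that every block is indeed an atom. The interlocking constraint---that one and the same multiset must simultaneously partition into $4n-1$ maximal atoms and decompose as symmetric pairs plus a triple---is what makes the construction delicate; once it is in place, checking minimality of the blocks and counting the resulting length-$2$ atoms is routine bookkeeping.
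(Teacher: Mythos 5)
Your reductions are sound and largely match the paper's: the strict upper bound for $\rho_3(G)$ follows from Theorem \ref{5.1} combined with \eqref{crucialinequality}; your strict lower bound via Proposition \ref{3.5}.1 (with $n_1=2<2n=n_2$, giving $\rho_3(G)\ge \mathsf D(G)+2$) is a valid variant of the paper's route, which instead combines Proposition \ref{3.5}.2 with Proposition \ref{3.2}.1; and the reduction of the asymptotic equality, via Lemma \ref{3.1}.3 and the upper bound in \eqref{crucialinequality}, to the single inequality $\rho_{4n-1}(G)\ge (2n-1)\mathsf D(G)+n$ is exactly what the paper does. Your structural analysis of what a witnessing factorization must look like (all $4n-1$ atoms of maximal length, the long factorization consisting of length-$2$ atoms plus a single traversal, so that the term multiset is negation-symmetric after deleting one zero-sum triple) is also essentially correct.

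The genuine gap is that the proof stops exactly where the real work begins: you never exhibit the $4n-1$ atoms. Saying you ``would prescribe the multiset of $S$ as a negation-symmetric family \ldots built from $e_1$, $e_2$, and auxiliary elements such as $e_1+e_2$'' and ``would exhibit a partition of this multiset into $4n-1$ blocks'' restates the goal rather than achieving it, and the elements $e_1$, $e_2$, $e_1+e_2$ alone cannot suffice. The paper resolves the interlocking constraint you identify with a concrete telescoping family: $U_i=e_2^{2n-1}\bigl(e_1-(i-1)e_2\bigr)\bigl(e_1+ie_2\bigr)$ for $i\in[1,n]$, $V_1=(e_1+e_2)^{2n-1}e_2e_1$, and the traversal $W=e_2(e_1+e_2)(e_1-2e_2)$, taking $S=\bigl(U_2^2(-U_1)^2\bigr)\cdot\ldots\cdot\bigl(U_n^2(-U_1)^2\bigr)\bigl(U_1(-U_2)V_1\bigr)$. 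The crucial feature, which no local construction can reproduce, is that the unpaired terms cancel \emph{across} blocks: $e_1-(i-1)e_2$ from $U_i^2$ pairs with $e_1+(i-1)e_2$ from $U_{i-1}^2$, the copies of $e_1-e_2$ from $(-U_1)^2$ pair with the copies of $e_1+e_2$ from $V_1$, and $e_1+ne_2$ is self-paired since $2ne_2=0$. Without such a family, or any argument that one exists, the base case $\rho_{4n-1}(G)\ge (2n-1)\mathsf D(G)+n$, and with it the entire second half of the corollary, remains unproven.
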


\begin{proof}
We have $\mathsf D(G)=\mathsf D^* (G)=2n+1$. 
The left inequality follows from Proposition \ref{3.5}.2 and from Proposition \ref{3.2}.1, and the right inequality follows from Theorem \ref{5.1}.

To prove the second statement, let $\{e_1, e_2\}$ be a basis of $G$ with $\ord (e_1) = 2$ and $\ord (e_2) = 2n$. For $i \in [1, n]$, we define
\[
U_i = e_2^{2n-1} \bigl( e_1 - (i-1)e_2 \bigr) \bigl( e_1 + i e_2 \bigr) \in \mathcal A (G) \,.
\]
Let
\[
V_1 = (e_1+e_2)^{2n-1}e_2e_1 \in \mathcal A (G) \,.
\] Let $W=e_2(e_1+e_2)(e_1-2e_2)$.
By construction, $S=\Bigl( U_2^2 (-U_1)^2 \Bigr) \cdot \ldots \cdot \Bigl( U_n^2 (-U_1)^2 \Bigr)\Bigl( U_1 (-U_2)V_1 \Bigr)$ is a product of $4(n-1) + 3=4n-1$ atoms and $SW^{-1}$
has a factorization into atoms of length $2$. This implies that
\[
\rho_{2(2n-1)+1}(G)\geq  1+\frac{|S|-3}{2} =1+\frac{(4n-1)(2n+1)-3}{2}= (2n-1)\mathsf D(G)+ \Blfloor \frac{\mathsf D(G)}{2}\Brfloor.
\] The result now follows from  Lemma \ref{3.1}.3.
\end{proof}

\smallskip
The proof of Theorem \ref{5.1} is based on the recent characterization of minimal zero-sum sequences of maximal length in groups of rank two, which will be formulated in Main Proposition \ref{5.4}. The proof of the characterization is obtained by combining the main results from  \cite{Ga-Ge03b}, \cite{Ga-Ge-Gr10a}, \cite{Re10c}, \cite{Sc10b} with a few small order groups handled by direct computation \cite{Bh-Ha-SP10b}.  The version below  is derived from this original in a few short lines
\cite[Theorem 3.1]{B-G-G-P13a} (apart from (e) and the fact that both parts of (d) hold when $n=2$, which we will deduce from the rest of theorem in the explanations below). It eliminates some overlap between type I and II in the original statement.

\medskip
\begin{mproposition} \label{5.4}
Let  $G = C_{m} \oplus C_{mn}$  with $n\geq 1$  and $m \ge 2$.  A sequence $S$
over $G$ of length $\mathsf D (G) = m+mn-1$ is a minimal zero-sum
sequence if and only if it has one of the following two forms{\rm
\,:}
\begin{itemize}
\medskip
\item \[
      S = e_1^{\ord (e_1)-1} \prod_{i=1}^{\ord (e_2)}
      (x_{i}e_1+e_2),
      \]where
      \begin{itemize}\item[(a)] $\{e_1, e_2\}$ is a basis of $G$,
      \item[(b)] $x_1, \ldots, x_{\ord (e_2)}  \in
      [0, \ord (e_1)-1]$ and $x_1 + \ldots + x_{\ord (e_2)} \equiv 1
      \mod \ord (e_1)$. \end{itemize} In this case, we say that $S$ is of type I(a) or I(b) according to whether $\ord(e_2)=m$ or $\ord(e_2)=mn>m$.

\medskip
\item \[
      S = f_1^{sm - 1} f_2^{(n-s)m+\epsilon}\prod_{i=1}^{m-\epsilon} ( -x_{i} f_1 +
      f_2),
      \] where
\begin{itemize}
      \item[(a)] $\{f_1, f_2\}$ is a generating set for  $G$ with $\ord (f_2) =
      mn$ and $\ord(f_1)>m$,
\item[(b)] $\epsilon\in [1,m-1]$  and
       $s \in [1, n-1]$,
        \item[(c)] $x_1, \ldots, x_{m-\epsilon} \in [1, m-1]$ with $x_1 + \ldots + x_{m-\epsilon} = m-1$,  \item[(d)] either  $s=1$ or
      $mf_1 = mf_2$, with both holding when $n=2$, and
      \item[(e)] either $\epsilon\geq 2$  or $mf_1\neq mf_2$.\end{itemize} In this case, we say that $S$ is of type II.
\end{itemize}
\end{mproposition}

\medskip
We gather some simple consequences of the above characterization which will be used without further mention. Let all notation be as in the Main Proposition \ref{5.4}.

\medskip

It is easy to see that $|\supp (S)|\ge3$.

\medskip

When $S$ has type II, it is always possible to find some $f'_1\in G$ such that $\{f'_1,f_2\}$ is a basis for $G$ with $\ord(f'_1)=m$ and $f_1=f'_1+\alpha f_2$ for some $\alpha\in [1,mn-1]$ (see \cite{B-G-G-P13a}). In particular,
since $mf_1\neq 0$ (in view of $\ord(f_1)>m$), we have $\ord(f_1)=tm$ for some $t\geq 2$ with $t\mid n$. Moreover, it is now readily checked that, regardless of whether $S$ has type I or II, every term of $S$ must have its order being a multiple of $m$.

\medskip

When $S$ has type II, it is clear that $-x_if_1+f_2=-x_if'_1+(1-\alpha x_i)f_2\neq f_2$ in view of $x_i\in [1,m-1]$, for any $i\in [1,m-\epsilon]$. Likewise, a term $-x_if_1+f_2=-x_if'_1+(1-\alpha x_i)f_2$ could only equal $f_1=f'_1+\alpha f_2$ if $x_i=m-1$ and $1-\alpha(m-1)=1-\alpha x_i\equiv \alpha\mod mn$, implying $1\equiv \alpha m\mod mn$, which is not possible. Consequently, we see that a term $-x_if_1+f_2$ can never equal $f_1$ or $f_2$. Likewise, since $\ord(f_1)\geq 2m$,  $-x_if_1+f_2=-x_jf_1+f_2$ is only possible if $x_i=x_j\in [1,m-1]$.

\medskip

When $S$ has type II, the condition $x_1+\ldots+x_{m-\epsilon}=m-1$ with $x_i\in [1,m-1]$ forces $\max x_i\leq (m-1)-(m-\epsilon-1)=\epsilon$. Thus we always have $x_i\leq \epsilon$. In particular, if $\epsilon=1$, then $x_i=1$ for all $i\in [1,m-1]$.

\medskip

When $S$ has type II, then $s\in [1,n-1]$ forces $n\geq 2$. Suppose $n=2$. Then $s=1$ and $\ord(f_1)=\ord(f_2)=2m$. Let $f'_1\in G$ be such that $\{f'_1,\,f_2\}$ is a basis for $G$ with $\ord(f'_1)=m$. Let $g=xf'_1+yf_2\in G$ with $x,\,y\in \Z$. If $y$ is odd, then $mg=xmf'_1+ymf_2=ymf_2\neq 0$, implying $\ord(g)>m$ and thus $\ord(g)=2m$. On the other hand, if $\ord(g)=2m$, then $0\neq mg=ymf_2$, implying $y$ is odd. Consequently, the elements $g\in G$ with $\ord(g)=2m$ are precisely those  $g=xf'_1+yf_2$ with $x,\,y\in\Z$ and $y$ odd, meaning any $g\in G$ with $\ord(g)=2m$ has $mg=mf_2$. In particular, $mf_1=mf_2$. This explains why both conditions of (d) always hold when $n=2$.

\medskip

If $n>1$, then there are at most $m-1$ terms of order $m$  in $S$. Indeed, if $S$ has type I(a), then all terms of order $m$ are contained in $\prod_{i=1}^{m}
      (x_{i}e_1+e_2)$. However, since $m\Sum{i=1}{m}
      (x_{i}e_1+e_2)=me_1\neq 0$, they cannot all have order $m$, meaning there are at most $m-1$ such terms. If $S$ has type I(b), then it is clear that all terms of the form $x_{i}e_1+e_2$ have order $mn>m$, leaving at most $m-1$ of order $m$, all equal to $e_1$. Finally, if $S$ has type II, then we have $\ord(f_1)\geq 2m$ as remarked above. Thus only terms contained in $\prod_{i=1}^{m-\epsilon} ( -x_{i} f_1 +
      f_2)$ can have order $m$, meaning there are at most $m-\epsilon\leq m-1$ such terms

Moreover, if $S$ has type II and contains precisely $m-1$ terms of order $m$, then we must have $\epsilon=1$ with each term from $\prod_{i=1}^{m-1} ( -x_{i} f_1 +
      f_2)$ having order $m$. However, since we have  $x_i\in [1,\epsilon]$ as remarked above, this is only possible if \[
S=f_1^{sm-1}f_2^{(n-s)m+1}(f_2-f_1)^{m-1} \quad\mbox{ with } \quad  \ord(f_2-f_1)=m \,.
\] In such case,  $\{f_2,f_2-f_1\}$ is also a generating set for $G$ with $\ord(f_2)=mn$ and $\ord(f_2-f_1)=m$, which forces $\{f_2,f_2-f_1\}$ to be a basis for $G$. Thus $S$ has type I(b) (taking $e_1=f_2-f_1$ and $e_2=f_2$).

\medskip

In particular, if $S$ had type II with  $mf_1=mf_2$ and $\epsilon=1$, then $S$ would also have type I(b). Indeed  $\epsilon=1$ forces $x_i=1$ for all $i\in [1,m-1]$ in view of $x_i\in [1,\epsilon]$, while each $-x_if_1+f_2=-f_1+f_2$ has order $m$ in view of $mf_1=mf_2$ (and the fact that every term of $S$ has its order being a multiple of $m$). Thus we would have  $m-1$ elements of order $m$, so that the above argument shows that $S$ has type I(b). This argument is what allows us to assume (e) in Main Proposition \ref{5.4}.  In particular, if $S$ has type II and $n=2$, then $\epsilon\geq 2$ and $m\geq 3$ (as $\epsilon\in [2,m-1]$) .

 \medskip

\medskip

The following lemma regarding type II sequences will be needed in the proof.

\begin{lemma} \label{lem-tech}
Let  $G = C_{m} \oplus C_{mn}$  with $n\geq 1$  and $m \ge 2$. Suppose  $S$ is a minimal zero-sum
sequence
over $G$ of length $\mathsf D (G) = m+mn-1$ that is of type II, say   \[
      S = f_1^{sm - 1} f_2^{(n-s)m+\epsilon}\prod_{i=1}^{m-\epsilon} ( -x_{i} f_1 +
      f_2)
      \] with all notation as in Main Proposition \ref{5.4}. Suppose $T\mid S$ is a subsequence with $|T|\geq 2m-1$. Then $T$ contains a subsequence $T_1\mid T$ with $\sigma(T_1)=mf_2$. Furthermore, if $T$ has no proper subsequence with this property, then $T=f_1^{m-1}f_2^{\epsilon}\prod_{i=1}^{m-\epsilon}(-x_if_1+f_2).$  \end{lemma}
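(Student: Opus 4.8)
The plan is to work in the basis $\{f'_1,f_2\}$ with $\ord(f'_1)=m$ and $f_1=f'_1+\alpha f_2$ furnished by the remarks following Main Proposition \ref{5.4}, and to record every subsequence of $S$ in the form $f_1^af_2^b\prod_{i\in J}(-x_if_1+f_2)$ with $0\le a\le sm-1$, $0\le b\le (n-s)m+\epsilon$ and $J\subseteq [1,m-\epsilon]$. Writing $g_i=-x_if_1+f_2$ and $\sigma_J=\sum_{i\in J}x_i$, a direct computation in the two coordinates shows that such a subsequence has first coordinate $a-\sigma_J$ modulo $m$, and that once $a\equiv\sigma_J\pmod m$, say $a-\sigma_J=mk$, its sum equals $(m\alpha k+b+|J|)f_2$ (using $mf'_1=0$). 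I would then isolate three explicit subsequences summing to $mf_2$: the sequence $f_2^m$ (available whenever $b\ge m$); the sequence $f_1^m$, whose sum is $mf_1$ and which sums to $mf_2$ whenever $a\ge m$, since then $a\le sm-1$ forces $s\ge 2$ and condition (d) of Main Proposition \ref{5.4} gives $mf_1=mf_2$; and the family $f_1^{\sigma_J}f_2^{\,m-|J|}\prod_{i\in J}g_i$, which sums to $mf_2$ precisely when $a\ge\sigma_J$ and $b\ge m-|J|$. The arithmetical point on which everything rests is that $\sum_{i=1}^{m-\epsilon}x_i=m-1<m$ by condition (c), so $\sigma_J\le m-1$ for every $J$.

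For the first assertion I would argue by contraposition, assuming no subsequence of $T$ sums to $mf_2$. The sequences $f_2^m$ and $f_1^m$ then force $b\le m-1$ and $a\le m-1$; since $0\le a,\sigma_J\le m-1$, the relation $a-\sigma_J=mk$ forces $k=0$ for any subsequence with vanishing first coordinate, so the \emph{only} candidates summing to $mf_2$ are those of the third family, and their unavailability means that no $J\subseteq J_0$ (the indices present in $T$) satisfies $|J|\ge m-b$ and $\sigma_J\le a$. Set $c_0=|J_0|$. If $c_0<m-b$, then $b+c_0\le m-1$ and $|T|=a+b+c_0\le 2m-2$. Otherwise let $\mu$ be the sum of the $m-b$ smallest present $x_i$; failure gives $a\le\mu-1$, while the remaining $c_0-(m-b)$ present terms each have $x_i\ge 1$ and total $\sigma_{J_0}-\mu\le (m-1)-\mu$, so $c_0\le (m-b)+(m-1)-\mu$, and again $|T|\le 2m-2$. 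Hence $|T|\ge 2m-1$ guarantees a subsequence summing to $mf_2$.

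For the second assertion, if $T$ has no proper subsequence summing to $mf_2$, then the subsequence produced above must be $T$ itself, so $\sigma(T)=mf_2$; moreover $b\le m-1$ and $a\le m-1$, since otherwise $f_2^m$ or $f_1^m$ would be a \emph{proper} subsequence of sum $mf_2$ (using $|T|\ge 2m-1>m$). Now the first-coordinate condition forces $a=\sigma_{J_0}$ (the only multiple of $m$ in $[-(m-1),m-1]$ being $0$), and then $\sigma(T)=(b+c_0)f_2=mf_2$ with $0\le b+c_0\le 2m-2$ forces $b+c_0=m$. Consequently $|T|=\sigma_{J_0}+m$, so $|T|\ge 2m-1$ gives $\sigma_{J_0}\ge m-1$; combined with $\sigma_{J_0}\le m-1$ this forces $\sigma_{J_0}=m-1$, which, as each $x_i\ge 1$, is possible only when $J_0=[1,m-\epsilon]$. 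Back-substituting yields $c_0=m-\epsilon$, $a=m-1$ and $b=\epsilon$, that is $T=f_1^{m-1}f_2^{\epsilon}\prod_{i=1}^{m-\epsilon}(-x_if_1+f_2)$, as required.

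I expect the main obstacle to be organizational rather than conceptual: after reducing to $a,b\le m-1$ one must be certain that no ``exotic'' subsequence using a block of $m$ extra copies of $f_1$ (i.e. with $k\ne 0$) can sum to $mf_2$, and this is exactly where $\sigma_J\le m-1<m$ and condition (d) are indispensable. The only genuinely computational step is the counting bound in the second subcase of the first assertion; everything else reduces to the two-coordinate description of sums set up at the outset.
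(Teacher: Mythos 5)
Your proof is correct, and it reaches the result by a route that differs from the paper's in its logical organization, though the raw materials are the same. Both arguments first rule out $\vp_{f_1}(T)\ge m$ and $\vp_{f_2}(T)\ge m$ using $f_2^m$, $f_1^m$ and condition (d) of Main Proposition \ref{5.4}, and both ultimately locate the witness among sequences $f_1^{\sigma_J}f_2^{m-|J|}\prod_{i\in J}(-x_if_1+f_2)$, exploiting $x_1+\ldots+x_{m-\epsilon}=m-1$. The paper proceeds constructively: it re-indexes so that the part of $T$ consisting of terms $-x_if_1+f_2$ is $\prod_{i=1}^{\ell}(-x_if_1+f_2)$, packs disjoint blocks $f_1^{x_i}(-x_if_1+f_2)$ of sum $f_2$ with a two-case count (its $t\le x$ versus $t\ge x$), and reads off the structural statement from the exact circumstances in which the constructed subsequence fails to be proper. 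You instead prove existence by contraposition with an extremal choice (the $m-b$ smallest available $x_i$), which compresses the paper's block-packing count into a short inequality chain, and you obtain the structure statement by a separate, direct computation of $\sigma(T)$ in coordinates relative to the basis $\{f'_1,f_2\}$ --- cleaner than the paper's equality-case tracking; your coordinate setup also yields something the paper never needs, namely a characterization of \emph{all} subsequences of sum $mf_2$ once $a,b\le m-1$. Two points should be made explicit in a final write-up: (i) type II forces $n\ge 2$ (since $s\in[1,n-1]$), and this is what you are implicitly using when you pass from $(b'+|J|)f_2=mf_2$, respectively $(b+c_0)f_2=mf_2$, with $b'+|J|,\,b+c_0\in[0,2m-2]$, to the equalities $b'+|J|=m$, respectively $b+c_0=m$ (one needs $\ord(f_2)=mn>2m-2$); (ii) the members of your third family always have sum $mf_2$ --- the conditions $a\ge\sigma_J$ and $b\ge m-|J|$ govern whether such a member divides $T$, not the value of its sum, so the phrase ``sums to $mf_2$ precisely when'' should be reworded.
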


\begin{proof} Since $s\in [1,n-1]$, we conclude that $n\geq 2$.
 If $s=1$, then $\vp_{f_1}(T)\leq \vp_{f_1}(S)=m-1$. On the other hand, if $s>1$, then $mf_1=mf_2$, in which case we must also have  $\vp_{f_1}(T)\leq m-1$ else $f_1^m\mid T$ will be a proper subsequence whose sum is $mf_1=mf_2$, as desired. Thus we may assume \be\label{wha1}\vp_{f_1}(T)=m-1-t\quad\mbox{ for some $t\in [0,m-1]$}.\ee
Likewise, we must have $\vp_{f_2}(T)\leq m-1$ else $f_2^m\mid T$ will be a proper subsequence whose sum is $mf_2$, as desired. By re-indexing the $-x_if_1+f_2$ appropriately, we may w.l.o.g. assume \be\label{wha2}\prod_{i=1}^\ell(-x_if_1+f_2)=\gcd\left(\prod_{i=1}^{m-\epsilon}(-x_if_1+f_2), T\right),\quad\mbox{ where $\ell\in [0,m-\epsilon]$}.\ee Hence, from the hypothesis $|T|\geq 2m-1$, we deduce that  \be\label{whalt}\vp_{f_2}(T)=|T|-\vp_{f_1}(T)-\ell\geq  m+t-\ell.\ee In particular, $\vp_{f_2}(T)\leq m-1$ forces $\ell\geq t+1\geq 1$.

Recall that $x_1+\ldots+x_{m-\epsilon}=m-1$ with $x_i\in [1,m-1]$ for all $i$. Thus $$x_1+\ldots+x_\ell=m-1-x\quad\mbox{ with } \quad x:=\Sum{i=\ell+1}{m-\epsilon}x_i\geq m-\epsilon-\ell\geq 0.$$
Consequently, if $t\leq x$, then the sequence $f_1^{m-1-t}\prod_{i=1}^\ell (-x_if_1+f_2)$ contains at least $\ell$ disjoint subsequences each having sum $f_2$ and containing precisely one term of the form $-x_if_1+f_2$, while if $t\geq x$, then the sequence $f_1^{m-1-t}\prod_{i=1}^\ell (-x_if_1+f_2)$ contains at least $\ell-\Big((m-1-x)-(m-1-t)\Big)=\ell-t+x$ disjoint subsequences each having sum $f_2$ and containing precisely one term of the form $-x_if_1+f_2$. In either case, we have  $$R_1\cdot\ldots\cdot R_{w}\mid
f_1^{m-1-t}\prod_{i=1}^\ell (-x_if_1+f_2)\quad\mbox{ with } \quad \sigma(R_i)=f_2\;\mbox { for $i\in [1,w]$},$$ where $w=\min\{\ell,\ell-t+x\}$. Moreover,  the subsequence  $R_1\cdot\ldots \cdot R_{w}$ of $f_1^{m-1-t}\prod_{i=1}^\ell (-x_if_1+f_2)$ will be proper unless $m-1-t=x_1+\ldots+x_\ell=m-1-x$, i.e., unless $t=x$.

Now, if $t<x$, then $T_1=R_1\cdot\ldots\cdot R_\ell f_2^{m-\ell}$ is a proper subsequence of $T$ (in view of \eqref{wha1}, \eqref{wha2}, \eqref{whalt} and $t\neq x$) with sum $\sigma(T_1)=mf_2$, as desired.
On the other hand, if $t\geq x$, then $T_1=R_1\cdot\ldots\cdot R_{\ell-t} f_2^{m-\ell+t}$  is a subsequence of $T$ (in view of \eqref{wha1}, \eqref{wha2} and \eqref{whalt}) with sum $\sigma(T_1)=mf_2$. Moreover, it will be a proper subsequence of $T$ unless $t=x=0$ and equality holds in \eqref{whalt}. From $x_1+\ldots +x_\ell=m-1-x=m-1$, we deduce that $\ell=m-\epsilon$ in this case (recall that $x_1+\ldots+x_{m-\epsilon}=m-1$ with $x_i\in [1,m-1]$ for all $i$), and now $$T=f_1^{m-1-t}f_2^{m+t-\ell}\prod_{i=1}^{\ell}
(-x_if_1+f_2)=f_1^{m-1}f_2^{\epsilon}\prod_{i=1}^{m-\epsilon}
(-x_if_1+f_2),$$ completing the proof.
\end{proof}

We are now ready to proceed with the proof of Theorem \ref{5.1}.

\begin{proof}[Proof of Theorem \ref{5.1}]
By Proposition \ref{2.1}, we have $\rho_3 (H) = \rho_3 (G)$. We study $\rho_3 (G)$ and recall that $\mathsf D (G) =  \mathsf D^* (G)  = m + mn  - 1$. If $n=1$, then $G = C_m \oplus C_m$,  and the theorem follows from Corollary \ref{4.2}. If $m=n=2$, then $G = C_2 \oplus C_4$, and the theorem follows from
Corollary \ref{5.2}.1. We now assume $n\geq 2$ with $m\geq 3$ when $n=2$. In particular, $\mathsf D(G)\geq 7$. It remains to show $\rho_3(G)<\rho := \lfloor 3 \mathsf D (G)/2 \rfloor=\lfloor\frac{3m+3mn-3}{2}\rfloor$ in this case. Assume by contradiction that there are $U_1, U_2, U_3, V_1, \ldots, V_{\rho} \in  \mathcal A (G)$ such that
\[
U_1U_2U_3 = V_1 \cdot \ldots \cdot V_{\rho} \,.
\] Without loss of generality, we may assume $|U_1|\geq |U_2|\geq |U_3|$ and $|V_1|\geq \ldots\geq |V_\rho|$. We continue by showing we can assume  the following assertion holds true. Note that $|U_3|=\mathsf D(G)-1$ is only possible in Assertion A if $\mathsf D(G)$ is odd and $|V_1|=2$.

\subsection*{Assertion A} $|U_1|=|U_2|=\mathsf D(G)$ and $\mathsf D(G)-1\leq |U_3|\leq \mathsf D(G)$ with the $U_i$ satisfying either
\begin{align} \nn &U_1=AB, &&-U_2=AC,  &&U_3=(-B)C,
   && |A|= \left\lceil \frac{\mathsf D(G)}{2}\right\rceil
    &&\und
    \quad |V_1|=2\quad \mbox{ or}\\  &U_1=ABw_1, &&-U_2=ACw_2, &&U_3=(-B)C(w_2-w_1),&&  |A|=\frac{\mathsf D(G)-1}{2}&&\und\quad |V_1|=3,\quad\mbox{ where}\nn\end{align}
    \begin{align}
&A=\gcd(U_1, -U_2), &&B=\gcd(U_1, -U_3), &&C=\gcd(-U_2, U_3), && |B|=|C|=\left\lfloor\frac{\mathsf D(G)}{2}\right \rfloor\und\;w_1,\,w_2\in G.\nn\end{align}

\smallskip

\begin{proof}[Proof of  Assertion A]
We trivially have $|U_1U_2U_3|=|U_1|+|U_2|+|U_3|\leq 3\mathsf D(G)$. Also,  $|V_i|\geq 2$ for each $i$ (as $0$ cannot divide any $U_i$, else $\rho\leq \mathsf D(G)+1$), implying $$3\mathsf D(G)\geq |U_1U_2U_3|=|V_1\cdot\ldots \cdot V_\rho|=\Sum{i=1}{\rho}|V_i|\geq 2\rho=2\lfloor 3 \mathsf D (G)/2 \rfloor\geq 3\mathsf D(G)-1,$$ with equality in the latter estimate only possible when $\mathsf D(G)$ is odd. It follows that, if $\mathsf D(G)$ is even, then  $|U_1|=|U_2|=|U_3|=\mathsf D(G)$ with $|V_i|=2$ for all $i$, while  if $\mathsf D(G)$ is odd, then  either  $|U_1|=|U_2|=|U_3|=\mathsf D(G)$ with $|V_1|=3$ and $|V_i|=2$ for all $i\geq 2$ or else  $|U_1|=|U_2|=\mathsf D(G)$ and $|U_3|=\mathsf D(G)-1$ with $|V_i|=2$ for all $i$.

When $|V_i|=2$ for all $i$, then $S=U_1U_2U_3$ has a factorization into length $2$ atoms. Thus $U_1=AB$, $-U_2=AC$ and $U_3=(-B)C$ for some $A,\,B,\,C\in \Fc(G)$. Since $|A|+|B|=|U_1|=\mathsf D(G)=|U_2|=|A|+|C|$, it follows that $|B|=|C|$. But now $2|B|=|B|+|C|=|U_2|\in \mathsf \{\mathsf D(G),\,\mathsf D(G)-1\}$, implying $|B|=|C|=\left\lfloor\frac{\mathsf D(G)}{2}\right \rfloor$ and $|A|=|U_1|-|B|=\mathsf D(G)-\left\lfloor\frac{\mathsf D(G)}{2}\right \rfloor=\left\lceil \frac{\mathsf D(G)}{2}\right\rceil$. If there is some $g\in \supp(B)\cap \supp(C)$, then $U_3$ will contain both $g$ and $-g$. However, since $U_3$ is an atom, this is only possible if $|U_3|=2$, contradicting that $|U_3|\geq \mathsf D(G)-1\geq 6$. Therefore we instead conclude that $\supp(B)\cap \supp(C)=\emptyset$, implying $\gcd(U_1,-U_2)=A$. Similar arguments show that $B=\gcd(U_1, -U_3)$ and $C=\gcd(-U_2, U_3)$, completing the proof of Assertion A in this case.
It remains to consider the case when $|V_1|=3$ with $|V_i|=2$ for $i\geq 2$, which is only possible when $|U_1|=|U_2|=|U_3|=\mathsf D(G)$ is odd.

If some $U_i$, say w.l.o.g. $U_3$, contains two terms from $V_1$, say $g_1g_2\mid \gcd(V_1,U_3)$,  then replacing $U_3$ by $U'_3=U_3(g_1g_2)^{-1}(g_1+g_2)$  and replacing $V_1$ by $V'_1=V_1(g_1g_2)^{-1}(g_1+g_2)$ yields atoms $U_1,\,U_2,\,U'_3\in \mathcal A(G)$  having a
factorization $U_1U_2U'_3=V'_1V_2\ldots V_\rho$ with $|U_1|=|U_2|=\mathsf D(G)$, \ $|U'_3|=\mathsf D(G)-1$ and $|V'_1|=|V_2|=\ldots=|V_\rho|=2$. These atoms also provide a counter-example to the theorem and satisfy the previously handled case of Assertion A. Thus we may assume (for the purpose of proving the theorem) that this does not occur: no length two subsequence of $V_1$ divides any $U_i$. In consequence, precisely one of each of the three terms of $V_1$ occurs in each $U_i$ while $(U_1U_2U_3)V_1^{-1}$ has a factorization into length $2$ atoms (in view of $|V_i|=2$ for $i\geq 2$). It follows that $U_1=ABw_1$, $-U_2=ACw_2$ and $U_3=(-B)C(w_2-w_1)$ for some $A,\,B,\,C\in \Fc(G)$, where $V_1=w_1(-w_2)(w_2-w_1)$.

Since $|A|+|B|+1=|U_1|=\mathsf D(G)=|U_2|=|A|+|C|+1$, it follows that $|B|=|C|$. But now $2|B|+1=|B|+|C|+1=|U_3|=\mathsf D(G)$ follows, implying $|B|=|C|=\frac{\mathsf D(G)-1}{2}=\left\lfloor\frac{\mathsf D(G)}{2}\right \rfloor$ and $|A|=|U_1|-|B|-1=\mathsf D(G)-\frac{\mathsf D(G)-1}{2}-1=\frac{\mathsf D(G)-1}{2}$.

Suppose there were some $g\in\supp(Bw_1)\cap \supp(Cw_2)$. Note $w_1\neq w_2$, else $V_1$ would contain a length $2$ zero-sum subsequence, contradicting that $V_1$ is an atom. Consequently, if $g=w_1$, then $w_1=g\in \supp(C)$, in which case $U_3$ contains the two term subsequence $w_1(w_2-w_1)$ of $V_1$, contrary to assumption. Likewise, if $g=w_2$, then $w_2\in \supp(B)$, in which case $U_3$ contains the two term subsequence $(-w_2)(w_2-w_1)$ of $V_1$, once more contrary to assumption. On the other hand, if $g\in \supp(B)\cap \supp(C)$, then   $U_3$ will contain both $g$ and $-g$,
yielding the contradiction $2=|U_3|\geq \mathsf D(G)-1=6$ as argued when $|V_i|=2$ for all $i$. So we instead conclude that $\supp(Bw_1)\cap \supp(Cw_2)=\emptyset$, implying $\gcd(U_1,-U_2)=A$. Similar arguments show that $B=\gcd(U_1, -U_3)$ and $C=\gcd(-U_2, U_3)$, completing the proof of Assertion A.
\end{proof}

In view of Assertion A, we see that we can apply Main Proposition \ref{5.4} to $U_1$ and $-U_2$ to characterize the possible structures for $U_1$ and $-U_2$. Since the roles of $U_1$ and $U_2$ are symmetric, this gives us  six cases.

\smallskip
\noindent
CASE 1: \ $U_1$ and $-U_2$ are both of type I(b), say
\[
U_1 = e_1^{m-1} \prod_{i=1}^{mn}
      (x_{i}e_1+e_2) \quad \und\quad
-U_2 = f_1^{m-1} \prod_{i=1}^{mn}
      (y_{i}f_1+f_2 ),
\] where $\{e_1, \,e_2\}$ and $\{f_1,\,f_2\}$ are bases for $G$ with $\ord(e_1)=\ord(f_1)=m$ and $\ord(e_2)=\ord(f_2)=mn>n$.

\medskip

Let $H=\la e_1,\,f_1\ra$. Since $\ord(e_1)=\ord(f_1)=m$, we conclude that $H$ is isomorphic to a subgroup of $C_m^2$. In particular, $\mathsf D(H)\leq \mathsf D(C_m^2)=2m-1$. Since $m,\,n\geq 2$ with $n\geq 3$ when $m=2$, we have $|B|=|C|\geq \frac{\mathsf D(G)-1}{2}=\frac{mn+m-2}{2}> m$. Likewise $|A|\geq \frac{\mathsf D(G)-1}{2}>m$.
Any element of the form $xe_1+e_2$ or $yf_1+f_2$, where $x,\,y\in \Z$, has order $mn>m=\ord(e_1)=\ord(f_1)$ and thus cannot be equal to $e_1$ nor $f_1$. Since $|A|\geq m+1$, we conclude that $A$ must contain a term from $U_1$ of the form $xe_1+e_2$, which must, by the previously mentioned order restriction, be equal to a term from $-U_2$ of the form $yf_1+f_2$. Hence $f_2-e_2\in H$. But now it is clear that difference between any two terms of the form $x'e_1+e_2$ and $y'f_1+f_2$, where $x',\,y'\in \Z$, must also be an element from $H$.

\smallskip

If $e_1=f_1$, then $H\cong C_m$ and $\mathsf D(H)=\mathsf D(C_m)=m$. In this case, $B=b_1\cdot\ldots\cdot b_\ell$ consists entirely of terms of the form $xe_1+e_2$ while $C=c_1\cdot \ldots\cdot c_\ell$ consists entirely of terms of the form $yf_1+f_2$, where $\ell=|B|=|C|\geq m+1$. Consequently, $(-b_1+c_1)\cdot\ldots\cdot (-b_m+c_m)\in \Fc(H)$ is a sequence of $m=\mathsf D(H)$ terms from $H$, meaning  $(-B)C$ contains a nontrivial zero-sum subsequence of length at most $2m<2\ell=|B|+|C|\leq |U_3|$. But this contradicts that $U_3$ is an atom with $(-B)C\mid U_3$. Therefore we may now assume $e_1\neq f_1$.

\smallskip

In view of $e_1\neq f_1$ and the previously mentioned order restriction, neither $e_1$ nor $f_1$ can be a term from $A$. Thus every term equal to $e_1$ in $U_1$ must be contained in $B$ except possibly one such term equal to $w_1$. Likewise, every term equal to $f_1$ in $-U_2$ must be contained in $C$ except possibly one such term equal to $w_2$. It follows that
$m-2\leq \vp_{e_1}(B)\leq m-1$ and $m-2\leq \vp_{f_1}(C)\leq m-1$. Consequently, in view of $|B|=|C|\geq m+1$,  there must be subsequences   $b_1\cdot b_2\mid B$ and $c_1\cdot c_2\mid C$ with each term $b_i$ of the form $b_i=x'_ie_1+e_2$ and each term $c_i$ of the form $c_i=y'_if_1+f_2$. Moreover, if $\vp_{e_1}(B)=\vp_{f_1}(C)=m-2$, then there exists a third term $b_3$ from $B$ also of the form $b_3=x'_3e_1+e_2$ and a third term $c_3$ from $C$ also of the form $c_3=y'_3f_1+f_2$ so that $b_1\cdot b_2\cdot b_3\mid B$ and $c_1\cdot c_2\cdot c_3\mid C$. Observe that $\vp_{f_1}(C)<m-1$, as well as  $\vp_{e_1}(B)< m-1$, is  only possible if $U_3=(-B)C(w_2-w_1)$.

\smallskip

If $\vp_{e_1}(B)=\vp_{f_1}(C)=m-1$, then $(-e_1)^{m-1}f_1^{m-1}(-b_1+c_1)\in \Fc(H)$ is a sequence of terms from $H$ of length $2m-1\geq \mathsf D(H)$, meaning  $(-B)C$ contains a nontrivial zero-sum subsequence of length at most $2m<2\ell=|B|+|C|\leq |U_3|$. But this contradicts that $U_3$ is an atom with $(-B)C\mid U_3$.

If $\vp_{e_1}(B)=\vp_{f_1}(C)=m-2$, then $(-e_1)^{m-2}f_1^{m-2}(-b_1+c_1)(-b_2+c_2)(-b_3+c_3)\in \Fc(H)$ is a sequence  of length $2m-1\geq \mathsf D(H)$, meaning  $(-B)C$ contains a nontrivial zero-sum subsequence, contradicting that $U_3$ is an atom since $U_3=(-B)C(w_2-w_1)$.

If $\vp_{e_1}(B)=m-1$ and $\vp_{f_1}(C)=m-2$, then $(-e_1)^{m-1}f_1^{m-2}(-b_1+c_1)(-b_2+c_2)\in \Fc(H)$ is a sequence of length $2m-1\geq \mathsf D(H)$, meaning  $(-B)C$ contains a nontrivial zero-sum subsequence, contradicting that $U_3$ is an atom since $U_3=(-B)C(w_2-w_1)$.

If $\vp_{e_1}(B)=m-2$ and $\vp_{f_1}(C)=m-1$, then $(-e_1)^{m-2}f_1^{m-1}(-b_1+c_1)(-b_2+c_2)\in \Fc(H)$ is a sequence of length $2m-1\geq \mathsf D(H)$, meaning  $(-B)C$ contains a nontrivial zero-sum subsequence, contradicting that $U_3$ is an atom since $U_3=(-B)C(w_2-w_1)$, which completes CASE 1.

\smallskip
\noindent
CASE 2: \ $U_1$ and $-U_2$ are both of type I(a), say
\[
U_1 = e_1^{mn-1} \prod_{i=1}^{m}
      (x_{i}e_1+e_2) \quad \und \quad
-U_2 = f_1^{mn-1} \prod_{i=1}^{m}
      (y_{i}f_1+f_2) \,.
\]where $\{e_1, \,e_2\}$ and $\{f_1,\,f_2\}$ are bases for $G$ with $\ord(e_1)=\ord(f_1)=mn>m$ and $\ord(e_2)=\ord(f_2)=m$.

\medskip

Since $m,\,n\geq 2$ with $n\geq 3$ when $m=2$, we have $mn-1>\frac{mn+m}{2}=\frac{\mathsf D(G)+1}{2}\geq |A|$. If $e_1=f_1$, then $\gcd(U_1,-U_2)=A$ implies $|A|\geq \vp_{e_1}(U_1)=mn-1$, contrary to what we just noted. Therefore $e_1\neq f_1$. On the other hand, since $\vp_{e_1}(U_1)=\vp_{f_1}(-U_2)=mn-1>\frac{\mathsf D(G)+1}{2}\geq \mathsf D(G)-|A|=|U_1|-|A|=|U_2|-|A|$, we must have $e_1,\,f_1\in \supp(A)$. It follows that $$e_1=yf_1+f_2\quad\und\quad f_1=xe_1+e_2\quad\mbox{ for some $x,\,y\in \Z$}.$$ Since $U_1$ contains at most $m$ terms not equal to $e_1$, we deduce that $\vp_{e_1}(A)\geq |A|-m\geq \frac{\mathsf D(G)-1}{2}-m=\frac12 mn-\frac{m}{2}-1$. However, since $e_1\neq f_1$ with the highest multiplicity of a term in $-U_2$ other than $f_1$ being $m-1$, we have $$\vp_{e_1}(A)\leq \vp_{yf_1+f_2}(-U_2)\leq m-1.$$ Hence $\frac12 mn-\frac{m}{2}-1\leq \vp_{e_1}(A)\leq m-1$, implying $n\leq 3$.

\bigskip

Suppose $n=3$. Then $\mathsf D(G)=4m-1$ and equality must hold in all estimates used to derive $n\leq 3$ above. In particular, $|A|=\frac{\mathsf D(G)-1}{2}$, forcing the case corresponding to $|V_1|=3$ in Assertion A, and  all $m$ terms of $U_1$ not equal to $e_1$ must be contained in $A$.   Arguing as in the previous paragraph, we must also have $$\frac12 mn-\frac{m}{2}-1\leq |A|-m\leq \vp_{f_1}(A)\leq \vp_{xe_1+e_2}(U_1)\leq m-1,$$ implying $n\leq 3$. Once more, equality must hold in all these estimates, meaning all $m$ terms of $-U_2$ not equal to $f_1$ must be contained in $A$. Consequently,
$$U_3=(-B)C(w_2-w_1)=(-e_1)^{2m-1}f_1^{2m-1}(f_1-e_1)=(-e_1)^{2m-1}(xe_1+e_2)^{2m-1}((x-1)e_1+e_2).$$
Since $\sigma(U_3)=0$, we see that $x\equiv 1\mod 3$, and now it is easily noted that $(-e_1)^m(xe_1+e_2)^m$ is a proper zero-sum subsequence of $U_3$, contradicting that $U_3$ is an atom. So we may instead assume $n=2$.

\bigskip

Since $n=2$, it follows that  $\mathsf D(G)=3m-1$ and $m\geq 3$.
Recall that $e_1=yf_1+f_2$ and $f_1=xe_1+e_2$. Thus, since $\ord(e_1)=\ord(f_1)=2m$, we conclude that $x$ and $y$ are both odd, whence
\be\label{goweer}
me_1=myf_1=mf_1=mxe_1\quad\mbox{ with }\quad \ord(me_1)=\ord(mf_1)=2.
\ee
If $\vp_{-e_1}(-B)\geq m$ and $\vp_{f_1}(C)\geq m$, then $(-e_1)^mf_1^m$ is a zero-sum subsequence of $U_3$ (in view of \eqref{goweer}) of length $2m<3m-2=\mathsf D(G)-1\leq |U_3|$, contradicting that $U_3$ is an atom. Therefore we may assume either $\vp_{-e_1}(-B)< m$ or $\vp_{f_1}(C)< m$, say w.l.o.g. $\vp_{-e_1}(-B)< m$ (the role of $e_1$ in $U_1$ is identical to that of $f_1$ in $-U_2$).

As noted earlier, $\vp_{e_1}(A)\leq m-1$. Consequently, if $|V_1|=2$, then $\vp_{-e_1}(-B)=\vp_{e_1}(U_1)-\vp_{e_1}(A)\geq 2m-1-(m-1)=m$, contrary to our assumption above. Thus we must have $|V_1|=3$,  which is only possible (in view of Assertion A) if $|U_3|=\mathsf D(G)=3m-1$ is odd. Thus $2\mid m$ and $m\geq 4$.

Applying the above argument when $|V_1|=3$, we again obtain the contradiction $\vp_{-e_1}(U_3)\geq m$ unless $\vp_{e_1}(A)=m-1$ and $w_1=e_1$. It follows that there are at most $|A|-\vp_{e_1}(A)=\frac{m}{2}$ terms of $A$ not equal to $e_1$. Hence, since $f_1\neq e_1$, we conclude that $\vp_{f_1}(A)\leq \frac{m}{2}$, implying \be\label{mickey1}\vp_{f_1}(C)\geq 2m-1-\frac{m}{2}-1=\frac32 m-2,\ee with equality only possible if $w_2=f_1$ and $w_2-w_1=f_1-e_1=(x-1)e_1+e_2$. Since $\vp_{e_1}(A)=m-1$ and $w_1=e_1$, we have   $$-B=(-e_1)^{m-1}\prod_{i=1}^{m/2}(-x_ie_1-e_2),$$ where we have  appropriately re-indexed the terms $x_ie_1+e_2$ in $U_1$ so that the first $\frac{m}{2}$ terms correspond to those from $B$. Thus $$U_3=(-e_1)^{m-1} \left(\prod_{i=1}^{m/2}(-x_ie_1-e_2)\right) f_1^{\frac32 m-2} g_1 g_2=(-e_1)^{m-1} \left(\prod_{i=1}^{m/2}(-x_ie_1-e_2)\right) (xe_1+e_2)^{\frac32 m-2} g_1g_2$$
 with w.l.o.g. $g_1\in \{f_1,y_1f_1+f_2\}$ (by re-indexing the $y_if_1+f_2$ appropriately) and $g_2=w_2-w_1=w_2-e_1$.

\medskip

If $g_1=f_1$, let $g=g_1=f_1=xe_1+e_2$. If $g_1\neq f_1$, the equality must hold in \eqref{mickey1}. In this case, let $g=g_2=f_1-e_1=(x-1)e_1+e_2$. Regardless, we see that $g=g_j= ze_1+e_2$ for some $z\in \{x,\,x-1\}$ and $j\in [1,2]$. To avoid a zero-sum subsequence of $$(-e_1)^{m-1}(-x_1e_1-e_2)(ze_1+e_2),$$ which would contradict that $U_3$ is an atom,  we must have $x_1\notin \{z,z-1,\ldots,z-(m-1)\}$ modulo $2m$. On the other hand, in view of \eqref{goweer}, we have $\sigma((xe_1+e_2)^{m})=me_1$, so that to avoid a  zero-sum subsequence of $$(-e_1)^{m-1}(xe_1+e_2)^{m}(-x_1e_1-e_2)(ze_1+e_2),$$ which would contradict that $U_3$ is an atom in view of $\frac32m-2\geq m$,  we must have $x_1\notin \{m+z,m+z-1,\ldots,m+z-(m-1)\}$ modulo $2m$. However, this leaves no possibilities left for the value of $x_1$ modulo $2m$, which is a contradiction that concludes CASE 2.

\smallskip
\noindent
CASE 3: \ Either $U_1$ is of type I(b) and $-U_2$ is of type I(a) or else $U_1$ is of type I(a) and $-U_2$ is of type I(b), say w.l.o.g. the former with
\[
U_1 = e_1^{m-1} \prod_{i=1}^{mn}
      (x_{i}e_1+e_2) \quad \und \quad
-U_2 = f_1^{mn-1} \prod_{i=1}^{m}
      (y_{i}f_1+f_2),
\] where $\{e_1,\,e_2\}$ and $\{f_1,\,f_2\}$ are bases of $G$ with $\ord(e_1)=\ord(f_2)=m$ and $\ord(e_2)=\ord(f_1)=mn>m$.

\medskip

Since $m,\,n\geq 2$ with $n\geq 3$ when $m=2$, we have $\vp_{f_1}(-U_2)=mn-1>\frac{\mathsf D(G)+1}{2}\geq \mathsf D(G)-|A|=|U_2|-|A|$, implying $f_1\in \supp(A)$. Consequently, since $f_1$ cannot equal $e_1$ due to $\ord(f_1)=mn>m=\ord(e_1)$, it follows that $$f_1=xe_1+e_2\quad\mbox{ for some $x\in \Z$}.$$ Let \be\label{stark1}y=\vp_{e_1}(B)\in [0,m-1].\ee Then $\vp_{e_1}(A)=m-1-y-\epsilon$, where $\epsilon =1$ if $|V_1|=3$ and $w_1=e_1$, and $\epsilon =0$ otherwise. Since $f_1\neq e_1$, it follows that $\vp_{f_1}(A)\leq |A|-\vp_{e_1}(A)= |A|-m+1+y+\epsilon$, implying \be\label{mickey2}\vp_{f_1}(C)\geq mn-1-\delta-|A|+m-1-y-\epsilon\geq \frac12 mn+\frac12 m-3-y,\ee
where $\delta=1$ if $|V_1|=3$ and $w_2=f_1$, and $\delta =0$ otherwise.
Moreover, the estimate on the far right of \eqref{mickey2} improves by $1$ unless $w_1=e_1$ and $w_2=f_2$, in which case $w_2-w_1=(x-1)e_1+e_2$ is a term of $U_3$. As a result, we see that $U_3(-B)^{-1}$ contains at least $\frac12 mn+\frac12 m-2-y$ terms from $e_2+\la e_1\ra$, say $c_1\cdot\ldots\cdot c_s\mid U_3(-B)^{-1}$ with
\be\label{stark2}s\geq \frac12 mn+\frac12 m-2-y\quad\und\quad c_i\in e_2+\la e_1\ra\;\mbox{ for all $i$}.\ee On the other hand, per definition of $y$, we see that $-B$ contains $|B|-y\geq \frac12 mn+\frac12 m-1-y$ terms from $-e_2+\la e_1\ra$, say $b_1\cdot\ldots\cdot b_t\mid -B$ with \be\label{stark3}t\geq \frac12 mn+\frac12 m-1-y\quad \und\quad b_i\in -e_2+\la e_1\ra\;\mbox{ for all $i$}.\ee

Now $e_1\in \la e_1\ra$ and $b_i+c_i\in \la e_1\ra$ for all $i\in [1,\min\{s,t\}]$, while $\mathsf D(\la e_1\ra)=\mathsf D(C_m)=m$. Moreover, $(\frac12 mn+\frac12 m-2-y)+y> m-1$ in view of $m,\,n\geq 2$ with $n\geq 3$ when $m=2$. Consequently, we conclude from \eqref{stark1}, \eqref{stark2} and \eqref{stark3} that $U_3$ contains a nontrivial zero-sum subsequence of length at most $2\lceil \frac12 mn+\frac12 m-2-y\rceil+y\leq mn+m-3<\mathsf D(G)-1\leq |U_3|$, contradicting that $U_3$ is an atom.

\smallskip
\noindent
CASE 4: \  $U_1$ and $-U_2$ are both of type II, say
\[
U_1
= f_1^{s_1m-1}f_2^{(n-s_1)m+\epsilon_1} \prod_{i=1}^{m-\epsilon_1}
      (-y_{i}f_1+f_2) \quad \und \quad -U_2=g_1^{s_2m-1}g_2^{(n-s_2)m+\epsilon_2}\prod_{i=1}^{m-\epsilon_2}(-z_ig_1+g_2),
\] where $\{f_1,\,f_2\}$ and $\{g_1,\,g_2\}$ are generating sets for $G$ such that $\ord(f_2)=\ord(g_2)=mn>m$ and $\ord(f_1),\,\ord(g_1)\geq 2m$, where $s_1,\,s_2\in [1,n-1]$, \ $\epsilon_1,\,\epsilon_2\in [1,m-1]$  and $y_i,\,z_i\in [1,m-1]$ for all $i$, and where $y_1+\ldots+y_{m-\epsilon_1}=z_1+\ldots+z_{m-\epsilon_2}=m-1$. Moreover, either $s_1=1$ or $mf_1=mf_2$ and either $s_2=1$ or $mg_1=mg_2$.

\medskip

Per the remarks after Main Proposition \ref{5.4}, let $\{f'_1,\,f_2\}$ and $\{g'_1,\,g_2\}$ be bases for $G$ with $\ord(f'_1)=\ord(g'_1)=m$ such that $$f_1=f'_1+\alpha f_2\quad\und\quad g_1=g'_1+\beta g_2$$ for some $\alpha,\,\beta\in \Z$. We distinguish two subcases.

\smallskip
\noindent
CASE 4.1: \ $n\geq 3$.

\medskip

Since $n\geq 3$, we have $|A|\geq |B|=|C|\geq \frac{mn+m-2}{2}\geq 2m-1$. Thus   $\vp_{\{f_1,\,f_2\}}(A)\geq |A|-(m-\epsilon_1)\geq |A|-m+1\geq m>m-\epsilon_2$, implying  \be\label{nice-intersect}\{f_1, f_2\}\cap\{g_1, g_2\}\ne\emptyset.\ee Also,
applying Lemma \ref{lem-tech} to $B\mid U_1$ and $C\mid -U_2$, we conclude that there exist subsequences $T_1\mid B$ and $T_2\mid C$ with $\sigma(T_1)=mf_2$, \ $\sigma(T_2)=mg_2$ and $|T_1|,\,|T_2|\leq 2m-1$.

\medskip

Suppose $mf_2=mg_2$, so that $\sigma(T_1)=\sigma(T_2)$. Then $(-T_1)T_2$ is a zero-sum subsequence of $(-B)C\mid U_3$, which contradicts that $U_3$ is an atom unless $(-T_1)T_2=(-B)C=U_3$ with $|B|=|C|=|T_1|=|T_2|=2m-1$, implying $n=3$. However, in view of the equality conditions in Lemma \ref{lem-tech}, this is only possible if
$$U_3=(-B)C=(-f_1)^{m-1}(-f_2)^{\epsilon_1}\prod_{i=1}^{m-\epsilon_1}(y_if_1-f_2)\cdot g_1^{m-1}g_2^{\epsilon_2}
\prod_{i=1}^{m-\epsilon_2}(-z_ig_1+g_2).$$ In particular, the terms $-f_1$, $-f_2$, $g_1$ and $g_2$ all occur in $U_3$ in view of $m\geq 2$ and $\epsilon_1,\,\epsilon_2\geq 1$. But then \eqref{nice-intersect} ensures that $U_3$ contains a zero-sum subsequence of length $2$, contradicting that $U_3$ is an atom with $|U_3|=4m-2>2$. So we instead conclude that \be\label{yeeyo} mf_2\neq mg_2.\ee

\medskip

If $s_1>1$ and $s_2>1$, then $mf_1=mf_2$ and $mg_1=mg_2$, which combined with \eqref{nice-intersect} yields $mf_1=mf_2=mg_1=mg_2$, contrary to \eqref{yeeyo}. Therefore we may w.l.o.g. assume $$s_2=1\quad\und\quad\vp_{g_1}(-U_2)=m-1.$$ Since $|A|\geq 2m-1>\vp_{g_1}(-U_2)+m-\epsilon_2$, we conclude that $g_2\in \supp(A)$. Observe that $$g_2\neq f_2,$$ for  $g_2=f_2$ would contradict \eqref{yeeyo}. In consequence, we find that
$$g_2=f_1\quad\mbox{ or }\quad g_2=-yf_1+f_2\;\mbox{ for some $y\in [1,m-1]$}.$$ This gives two further subcases.

\smallskip
\noindent
CASE 4.1.1: \ $g_2=-yf_1+f_2$ for some $y\in [1,m-1]$.

\medskip

Now $g_2\neq f_2$ as already remarked. Also, $g_2=-yf_1+f_2\neq f_1$ as remarked after Main Proposition \ref{5.4}. Thus \eqref{nice-intersect} ensures that we must have $$g_1=f_1\quad \mbox{ or }\quad g_1=f_2.$$
If $g_1=f_1$, then  $f_1$ can have multiplicity at most  $\vp_{g_1}(-U_2)=m-1$  in $A$, meaning $f_2$  must also be contained in $A$ in view of $|A|\geq 2m-1$. By an analogous argument, if $g_1=f_2$, then $f_1$ must be contained in $A$. In other words, in both cases, we have  $$f_1,\,f_2\in \supp(A).$$

\medskip

Suppose that $g_1=f_1$. Then, as $f_2\in\supp(A)$ but $f_2\neq f_1=g_1$ and $f_2\neq g_2$, it follows that $f_2=-zg_1+g_2$ for some $z\in [1,m-1]$. Thus $$f_2=-zg_1+g_2=-zf_1+g_2=-zf_1-yf_1+f_2,$$ implying that $(z+y)f_1=0$ with $z+y\in [2,2m-2]$. However, since $\ord(f_1)\geq 2m$, this is not possible. So we instead conclude that $$g_1=f_2.$$

\medskip

Now $f_1\in \supp(A)$ but $g_1=f_2\neq f_1$ and $g_2=-yf_1+f_2\neq f_1$ as remarked after Main Proposition \ref{5.4}. In consequence, $f_1=-zg_1+g_2$ for some $z\in [1,m-1]$. Thus $$f'_1+\alpha f_2=f_1=-zg_1+g_2=-zf_2+g_2=-zf_2-yf_1+f_2=-yf'_1+(1-z-\alpha y)f_2,$$ which, in view of $y\in [1,m-1]$, is only possible if $y=m-1$ and \be\nn\alpha\equiv 1-z-\alpha y\equiv 1-z-\alpha(m-1)\mod mn.\ee  The above congruence  implies that $z\equiv 1-\alpha m\mod mn$, which, in view of $z\in [1,m-1]$, is only possible if $z=1$ and $\alpha m\equiv 0\mod mn$. Thus $mf_1=m(f'_1+\alpha f_2)=mf'_1+\alpha m f_2=0$, contradicting that $\ord(f_1)\geq 2m$ for type II.

\smallskip
\noindent
CASE 4.1.2: \ $g_2=f_1$.

\medskip

If $s_1>1$, then $mg_2=mf_1=mf_2$, contrary to \eqref{yeeyo}. Therefore $$s_1=1\quad\und\quad \vp_{f_1}(U_1)=m-1.$$  Since $|A|\geq 2m-1>\vp_{f_1}(-U_2)+m-\epsilon_1$, we conclude that $f_2\in \supp(A)$. As already remarked, we have $f_2\neq g_2$. Consequently,
$$f_2=g_1\quad\mbox{ or }\quad f_2=-zg_1+g_2\;\mbox{ for some $z\in [1,m-1]$}.$$ Observe, however, that the roles of $U_1$ and $-U_2$ are now symmetric (we have the same information about $-U_2$ that we did about $U_1$ before CASE 4.1.1). Thus, if $f_2=-zg_1+g_2$ for some $z\in [1,m-1]$, then, swapping the roles of $U_1$ and $-U_2$, we fall under the hypotheses of CASE 4.1.1, and the proof is complete by those prior arguments. So, combined with the subcase hypothesis, we may instead assume \be\label{golg}f_2=g_1\quad\und \quad f_1=g_2.\ee
Now $\vp_{f_1}(A)\leq m-1$ and $\vp_{f_2}(A)=\vp_{g_1}(A)\leq m-1$ in view of $s_1=s_2=1$. Consequently, since $|A|\geq 2m-1$, we conclude from \eqref{golg} that $-yf_1+f_2=-zg_1+g_2=-zf_2+f_1$ for some $y,\,z\in [1,m-1]$. Thus $$0=(1+y)f_1-(1+z)f_2=(1+y)f'_1+(\alpha (1+y)-1-z)f_2,$$ which, in view of $y\in [1,m-1]$, is only possible if $y=m-1$ and \be\nn 0\equiv\alpha(1+y)-1-z\equiv \alpha m-1-z\mod mn.\ee The above congruence  implies that $z\equiv \alpha m-1\mod mn$, which, in view of $z\in [1,m-1]$, is only possible if $z=m-1$ and $\alpha m\equiv m\mod mn$. Thus $mg_2=mf_1=m(f'_1+\alpha f_2)=mf_2$, contradicting \eqref{yeeyo} and completing CASE 4.1.

\smallskip
\noindent
CASE 4.2: \ $n=2$.

\medskip

Since $s_1,\,s_2\in [1,n-1]=[1,1]$, we conclude that $s_1=s_2=1$. We also have $m\geq 3$ and $\epsilon_1,\,\epsilon_2\geq 2$ in view of $n=2$ (the latter per (d) and (e) in Main Proposition \ref{5.4}). Now
\[
U_1=f_1^{m-1}f_2^{m+\epsilon_1}\prod_{i=1}^{m-\epsilon_1}(-y_{i}f_1+f_2) \quad \und \quad  -U_2=g_1^{m-1}g_2^{m+\epsilon_2}\prod_{i=1}^{m-\epsilon_2}(-z_{i}g_1+g_2)
\]
with $\ord(f_1)=\ord(f_2)=\ord(g_1)=\ord(g_2)=2m$ and  (as remarked after Main Proposition \ref{5.4}) \be\label{m-unity}mf_1=mf_2=mg_1=mg_2.\ee
Observe that $$\frac32m-1\leq |A|\leq \frac32m\quad\und\quad\frac32m-1\leq |B|=|C|\leq\frac32m-\frac12.$$
If neither $f_2$ nor $g_2$ is a term from $A$, then $(-f_2)^mg_2^m$ will be a  subsequence of $U_3$ which is zero-sum (in view of \eqref{m-unity}) and has length $2m<3m-2\leq |U_3|$, contradicting that $U_3$ is an atom. Therefore  \be\label{f_2inSupp}f_2\in \supp(A)\quad\mbox{ or } \quad g_2\in \supp(A).\ee We handle several subcases.

\smallskip
\noindent
CASE 4.2.1: \ $f_2=g_2$.

We may w.l.o.g. assume $\epsilon_1\leq \epsilon_2$. Then $\vp_{f_2}(A)=m+\epsilon_1$ and $\vp_{f_2}(B)=0$. As remarked after Main Proposition \ref{5.4}, we have $y_i\leq \epsilon_1$ and $z_j\leq \epsilon_2$  for all $i$ and $j$. Also, since there are precisely $2m>3m-1-(\frac32m-1)\geq \mathsf D(G)-|B|$ terms of $U_1$ of the form $-xf_1+f_2$ with $x\in [0,m-1]$,  and since $\vp_{f_2}(B)=0$, it follows that
$$yf_1-f_2\in\supp(-B)\quad \mbox{ for some } y\in [1,\epsilon_1]\subseteq [1,m-1].$$ Now, since $\vp_{f_2}(B)=0$, we have $\vp_{f_1}(B)\geq |B|-(m-\epsilon_1)\geq \frac{m}{2}-1+\epsilon_1\geq \epsilon_1$. Thus $(-f_1)^y(yf_1-f_2)$ is a subsequence of $-B$. If $f_2=g_2\in \supp(C)$, then $(-f_1)^y(yf_1-f_2)f_2$ would be a zero-sum subsequence of $U_3$ of length
$y+2\leq m+1<3m-2\leq |U_3|$, contradicting that $U_3$ is an atom. Therefore we may instead assume $\vp_{g_2}(C)=0$. But now, repeating the prior arguments for $-U_2$ instead of $U_1$, we find that $$-zg_1+g_2=-zg_1+f_2\in\supp(C)\quad \mbox{ for some } z\in [1,\epsilon_2]\subseteq [1,m-1]$$ and that $\vp_{g_1}(C)\geq |C|-(m-\epsilon_2)\geq \frac{m}{2}-1+\epsilon_2\geq \epsilon_2$. Thus $g_1^z(-zg_1+f_2)(-f_1)^y(yf_1-f_2)$ is a zero-sum subsequence of $U_3$ of length $z+y+2\leq \epsilon_1+\epsilon_2+2\leq 2m<3m-2\leq |U_3|$ (in view of $m\geq 3$), contradicting that $U_3$ is an atom and completing the subcase.

\smallskip
\noindent
CASE 4.2.2: \ $f_2=g_1$ or $g_2=f_1$.

By symmetry, we may w.l.o.g. assume $$f_2=g_1.$$
Then $\vp_{f_2}(A)=\vp_{g_1}(A)=m-1$, meaning $\vp_{g_1}(C)=\vp_{f_2}(C)=0$ and $\epsilon_1+1\geq \vp_{-f_2}(-B)\geq \epsilon_1.$

\bigskip

Suppose $g_2=f_1$. Then  $\vp_{f_1}(A)=\vp_{g_2}(A)=m-1$, yielding $\vp_{g_2}(C)=\vp_{f_1}(C)\geq\epsilon_2$ and  $\frac32m\geq |A|\geq \vp_{f_2}(A)+\vp_{f_1}(A)= 2m-2$, which is only possible if $3\leq m\leq 4$ with $|A|=2m-2$ and $|V_1|=2$.
In this case, $$(-f_2)^{\epsilon_1}f_1^{\epsilon_2}\prod_{i=1}^{m-\epsilon_1}(y_if_1-f_2)
\prod_{i=1}^{m-\epsilon_2}(-z_if_2+f_1)\mid U_3.$$ Thus, if $\epsilon_1=\epsilon_2=m-1$, then
 $\prod_{i=1}^{m-\epsilon_1}(y_if_1-f_2)
\prod_{i=1}^{m-\epsilon_2}(-z_if_2+f_1)$ is a proper subsequence of $U_3$ with sum $(m-1)f_1-f_2-(m-1)f_2+f_1=mf_1-mf_2=0$ (in view of \eqref{m-unity}), contradicting that $U_3$ is an atom. Therefore we may w.l.o.g. assume  that $\epsilon_1\leq m-2$.
Hence, since $\epsilon_i\in [2,m-1]$ for $n=2$, it follows that $m=4$ with $2\leq \epsilon_1\leq m-2$, so that $\epsilon_1=2$. Consequently, since $y_1+y_2=m-1=3$ with $y_i\in [1,\epsilon_1]=[1,2]$, we see that w.l.o.g. $y_1=1$ and $y_2=2$. Likewise, if $\epsilon_2=2$, then w.l.o.g. $z_1=1$ and $z_2=2$, while if $\epsilon_2=3=m-1$, then $z_1=m-1=3$. In the former case, $(-2f_2+f_1)(f_1-f_2)(2f_1-f_2)$ is  a proper zero-sum subsequence of $U_3$ (in view of \eqref{m-unity} and $m=4$), while in the latter case, $(-3f_2+f_1)(2f_1-f_2)f_1$ is a proper zero-sum subsequence of $U_3$ (again, in view of \eqref{m-unity} and $m=4$), both contradicting that $U_3$ is an atom. So we instead conclude that $$g_2\neq f_1.$$

\bigskip

Suppose next that $f_1,\,g_2\in \supp(A)$. In view of $g_2\neq f_1$ and $g_1=f_2$, this is only possible if $$f_1=-zg_1+g_2=-zf_2+g_2\quad\und\quad g_2=-yf_1+f_2\quad\mbox{ for some $y\in [1,m-1]$ and $z\in [1,m-1]$}.$$ Thus $$f'_1+\alpha f_2=f_1=-zf_2+g_2=-zf_2-yf_1+f_2=-yf'_1+(1-z-\alpha y)f_2.$$ However, since $y\in  [1,m-1]$, this is only possible if $y=m-1$ and $1-z-\alpha y= 1-z-\alpha(m-1)\equiv \alpha\mod 2m$. Hence $z\equiv 1-\alpha m\mod 2m$, which, in view of $z\in [1,m-1]$, is only possible if $z=1$ with $\alpha m \equiv 0\mod 2m$, implying $mf_1=mf'_1+\alpha mf_2=0$. Since this contradicts that $\ord(f_1)=2m>m$, we may now assume $$f_1\notin \supp(A)\quad\mbox{ or }\quad g_2\notin \supp(A).$$

\bigskip

Suppose $f_1\notin \supp(A)$. Then $A\mid f_2^{m-1}\prod_{i=1}^{m-\epsilon_1}(-y_if_1+f_2)$. If $|\supp(A)|\geq 3$, then, since $g_1=f_2$, we must have $-yf_1+f_2=-zg_1+g_2$ and $-y'f_1+f_2=-z'g_1+g_2$ for some distinct $y,\,y'\in [1,m-1]$ and distinct $z,\,z'\in [0,m-1]$, implying $$(y-y')f'_1+\alpha(y-y')f_2=(y-y')f_1=(z-z')g_1=(z-z')f_2.$$ Thus $y\equiv y'\mod m$, which, in view of $y,\,y'\in [1,m-1]$, forces $y=y'$, contrary to assumption. Therefore we must instead have $$|\supp(A)|=2.$$ Let $-yf_1+f_2$ be the element of $\supp(A)\setminus\{f_2\}$, where $y\in [1,m-1]$. Then $-yf_1+f_2$ has  multiplicity at least $\vp_{-yf_1+f_2}(A)= |A|-\vp_{f_2}(A)=|A|-(m-1)\geq \frac{m}{2}$. Consequently, $\frac{m}{2}y\leq \vp_{-yf_1+f_2}(A)y\leq y_1+\ldots+y_{m-\epsilon_1}=m-1$, which together with  $y\in [1,m-1]$ ensures that $y=1$, so that $-f_1+f_2\in \supp(A)$. Since $-f_1+f_2\in \supp(A)$ with $g_1=f_2$, it follows that  $$-f_1+f_2=-zg_1+g_2=-zf_2+g_2\in \supp(A)\quad\mbox{ for some $z\in [0,\epsilon_2]$}.$$ If $z=0$, then $mg_2=-mf_1+mf_2=0$ (in view of \eqref{m-unity}), contradicting that $\ord(g_2)=2m$. Therefore we must have $z\in [1,m-1]$.
Then $-zg_1+g_2=-f_1+f_2$ has  multiplicity at least $\vp_{-zg_1+g_2}(A)=\vp_{-f_1+f_2}(A)=\vp_{-yf_1+f_2}(A)\geq  \frac{m}{2}$. Consequently, $\frac{m}{2}z\leq \vp_{-zg_1+g_2}(A)z\leq z_1+\ldots+z_{m-\epsilon_2}=m-1$, which together with  $z\in [1,m-1]$ ensures that $z=1$. Thus $$-f_1+f_2=-zg_1+g_2=-zf_2+g_2=-f_2+g_2.$$
Hence $g_2=-f_1+2f_2$ and $\supp(A)=\{f_2,\,-f_1+f_2\}=\{g_1,\,-g_1+g_2\}$.

Since $f_1\notin \supp(A)$, we have \be\label{mickey3}\vp_{-f_1}(-B)\geq \vp_{f_1}(U_1)-1=m-2,\ee with equality only possible if $|V_1|=3$ with $w_1=f_1$. Since $f_2=g_1$ with $\vp_{f_2}(A)=\vp_{g_1}(A)=m-1$, we have \be\nn\vp_{-f_2}(-B)\geq \vp_{f_2}(U_1)-1-\vp_{f_2}(A)=\epsilon_1\geq 2\ee (recall that $\epsilon_2\geq 2$ for $n=2$).  Since $g_2=-f_1+2f_2\notin \{-f_1+f_2,f_2=g_1\}=\supp(A)$, we have $$\vp_{-f_1+2f_2}(C)=\vp_{g_2}(C)\geq \vp_{g_2}(-U_2)-1=m+\epsilon_2-1\geq m+1.$$
Since $-yf_1+f_2=-f_1+f_2\in\supp(A)$, we know $y_k=1$ for some $k\in [1,m-\epsilon_1]$. If $y_i=1$ for all $i\in [1,m-\epsilon_1]$, then $y_1+\ldots+y_{m-\epsilon_1}=m-1$ forces $\epsilon_1=1$, contradicting that $\epsilon_1\geq 2$ for $n=2$. Therefore we may instead assume there is some $y_j\geq 2$ with $j\in [1,m-\epsilon_1]$. Then, since $y_1+\ldots+y_{m-\epsilon_1}=m-1$ with at least one $y_k=y=1$, we conclude that $2\leq y_j\leq m-2$, implying $$m\geq 4.$$ Since $\supp(A)=\{f_2,\,-f_1+f_2\}$, we must either have $y_jf_1-f_2\in \supp(-B)$ or $w_1=-y_jf_1+f_2$. In the former case,
$$(y_jf_1-f_2)(-f_1+2f_2)(-f_1)^{y_j-1}(-f_2)$$ is a zero-sum subsequence of $U_3$ of length
$y_j+2\leq m<3m-2\leq |U_3|$, contradicting that $U_3$ is an atom. In the latter case, $|V_3|=3$ and we have strict inequality in \eqref{mickey3}, in which case $$(-f_1)^{m-1}(-f_2)^2(-f_1+2f_2)^{m+1}$$ is a zero-sum subsequence of $U_3$ having length $2m+2<3m-1=|U_3|$ (in view of $m\geq 4$), contradicting that $U_3$ is an atom.
 So we may now assume $$f_1\in \supp(A)\quad\und\quad g_2\notin \supp(A).$$

\bigskip

Since $g_2\notin \supp(A)$, we have $A\mid g_1^{m-1}\prod_{i=1}^{m-\epsilon_2}(-z_ig_1+g_2)=f_2^{m-1}\prod_{i=1}^{m-\epsilon_2}(-z_if_2+g_2)$.
Thus,  since $f_1\in \supp(A)$, we have $$f_1=-xg_1+g_2=-xf_2+g_2\quad\mbox{ for some $x\in [1,m-1]$}.$$
 If $\supp(A)\neq \{f_2,\,f_1\}$, then  $-yf_1+f_2=-zg_1+g_2$ for some $y,\,z\in [1,m-1]$. In this case, $$-yf'_1+(1-\alpha y)f_2=-yf_1+f_2=-zg_1+g_2=-zf_2+(xf_2+f_1)=f'_1+(x-z+\alpha)f_2.$$ Thus, since $y\in [1,m-1]$, it follows that $y=m-1$ with $x-z\equiv 1-\alpha m\mod 2m$. Since $x-z\in [-(m-2),m-2]$ (in view of $x,\,y\in [1,m-1]$), we conclude that $x-z=1$ with $\alpha m\equiv 0\mod 2m$. But this means $mf_1=mf'_1+\alpha mf_2=0$, contradicting that $\ord(f_1)=2m$. Therefore, we instead conclude that $$\supp(A)=\{f_1,\,f_2\},$$ whence $\vp_{-xg_1+g_2}(A)=\vp_{f_1}(A)=|A|-\vp_{f_2}(A)=|A|-m+1\geq \frac{m}{2}$. Thus $f_1=-xg_1+g_2=-g_1+g_2$ in view of $\frac{m}{2}x\leq \vp_{-xg_1+g_2}(A)x\leq z_1+\ldots+z_{m-\epsilon_2}=m-1$ with $x\in [1,m-1]$. But this implies $mf_1=-mg_1+mg_2=0$ (in view of \eqref{m-unity}), contradicting that $\ord(f_1)=2m$, which completes CASE 4.2.2.

\smallskip
\noindent
CASE 4.2.3: \ $f_1=g_1$

In this case, $\vp_{f_1}(A)=\vp_{g_1}(A)=m-1$ and $\vp_{f_1}(B)=\vp_{g_1}(C)=0$. In view of \eqref{f_2inSupp}, we may w.l.o.g. assume $f_2\in \supp(A)$. We have $f_2\neq f_1=g_1$ while we can assume $f_2\neq g_2$ else CASE 4.2.1 completes the proof. Therefore  \be\label{wefillt}f_2=-xg_1+g_2=-xf_1+g_2\quad \mbox{ for some $x\in [1,m-1]$}.\ee
 Likewise, if $g_2\in \supp(A)$, then $g_2=-yf_1+f_2$ for some $y\in [1,m-1]$, implying $$f_2=-xf_1+g_2=-xf_1-yf_1+f_2,$$ in which case $(x+y)f_1=0$ with $x+y\in [2, 2m-2]$, contradicting that $\ord(f_1)=2m$. Therefore we conclude that $g_2\notin \supp(A)$. As a result, all elements in $\supp(A)\setminus \{g_1\}$ have the form  $-z_ig_1+g_2=-z_if_1+g_2$ with $z_i\in [1,m-1]$.

 Let $-zg_1+g_2\in \supp(A)\setminus\{g_1\}$ be arbitrary. Let us show that $z\geq x$. If $z=x$, this is trivial, so suppose $z\neq x$.  Then $-yf_1+f_2=-zg_1+g_2=-zf_1+g_2$ for some $y\in [1,m-1]$. In this case, \eqref{wefillt} implies $$-yf_1+f_2=-zf_1+g_2=-zf_1+(xf_1+f_2),$$ yielding $(x-z+y)f_1=0$. Consequently, since $\ord(f_1)=2m$ with $x-z+y\in [-(m-1)+2,2(m-1)-1]=[-m+3,2m-3]$, we see that $z=x+y\geq x+1$, as claimed.

 All terms of $A$ not equal to $g_1=f_1$ have the form $-z_ig_1+g_2$. There are at least $|A|-\vp_{g_1}(A)=|A|-m+1\geq \frac{m}{2}$ such terms all with $z_i\geq x$ as shown above. Consequently,  $\frac{m}{2}x\leq z_1+\ldots+z_{m-\epsilon_2}=m-1$, which implies $x=1$. Hence
 $$f_2=-xf_1+g_2=-f_1+g_2,$$ which yields $mf_2=-mf_1+mg_2=0$ (in view of \eqref{m-unity}), contradicting that $\ord(f_2)=2m$ and completing the subcase.

\smallskip
\noindent
CASE 4.2.4: \ $\{f_1,\,f_2\}\cap \{g_1,\,g_2\}=\emptyset$.

Let $$a_i=\vp_{-if_1+f_2}(A)\quad\und\quad b_i=\vp_{-ig_1+g_2}(A)\quad\mbox{ for $i\in [1,m-1]$}.$$
Let $$c=\left|\gcd\left(\prod_{i=1}^{m-\epsilon_2}(-z_ig_1+g_2),\,
\prod_{i=1}^{m-\epsilon_1}(-y_if_1+f_2)\right)\right|.$$ Thus $c$ counts the number of terms of $A$ simultaneously equal to some $-y_if_1+f_2$ as well as some $-z_jg_1+g_2$. In view of the hypothesis $\{f_1,\,f_2\}\cap \{g_1,\,g_2\}=\emptyset$, we see that every term of $A$ is either equal to some $-y_if_1+f_2$ or to some $-z_jg_1+g_2$. As a result, the inclusion-exclusion principle gives \be\label{getitl}\Sum{i=1}{m-1}a_i+\Sum{i=1}{m-1}b_i-c=|A|\geq \frac32 m-1.\ee
Note $-f_1+f_2$ and $-g_1+g_2$ have order $m$ (in view of \eqref{m-unity}), meaning $\{-f_1+f_2,\,-g_1+g_2\}\cap \{f_1,\,f_2,\,g_1,\,g_2\}=\emptyset$. Consequently, if $-f_1+f_2$ occurs in $A$, then it must be equal to some $-zg_1+g_2$ with $z\in [1,m-1]$. It follows that $c\geq a_1$. Likewise, if $-g_1+g_2$ occurs in $A$, then it must be equal to some $-yf_1+f_2$, so that $c\geq b_1$. Averaging these estimates, we obtain $$c\geq \frac{a_1+b_1}{2}.$$ Applying this estimate in \eqref{getitl} along with the pigeon-hole principle, we conclude that either $$\frac12 a_1+\Sum{i=2}{m-1}a_i\geq \frac34m-\frac12\quad\mbox{ or }\quad \frac12 b_1+\Sum{i=2}{m-1}b_i\geq \frac34m-\frac12,$$ and we w.l.o.g. assume the former: \be\label{run1}\frac12 a_1+\Sum{i=2}{m-1}a_i\geq \frac34m-\frac12.\ee
By definition of the $a_i$, we have \be\label{run2}a_1+\Sum{i=2}{m-1}2a_i\leq a_1+2a_2+3a_3+\ldots+(m-1)a_1\leq y_1+\ldots+y_{m-\epsilon_1}=m-1.\ee Combining \eqref{run1} and \eqref{run2} yields $$\frac32m-1\leq 2\left(\frac{a_1}{2}+\Sum{i=2}{m-1}a_i\right)\leq m-1,$$ which is a contradiction, concluding CASE 4.

\bigskip

If $|U_3|=\mathsf D(G)$, then it possible to also apply Main Proposition \ref{5.4} to $U_3$ and (by symmetry) re-index  the $U_i$ with $i\in [1,3]$ in any fashion. Consequently, if one of $U_1$, $U_2$ or $U_3$ has the same type from among I(a), I(b) and II, then we may w.l.o.g. re-index the $U_i$ so that $U_1$ and $-U_2$ have the same type and apply CASE 1, 2 or 4 to  yield the desired conclusion (note $U_i$ and $-U_i$ have the same type). On the other hand, if $U_1$, $U_2$ and $U_3$ have distinct types I(a), I(b) and II, then we my re-index the $U_i$ so that $U_1$ has type I(b) and $-U_2$ has type I(a), in which case CASE 3 completes the proof. In summary, the proof is now complete when $|U_3|=\mathsf D(G)$, so we instead assume $$|U_3|=\mathsf D(G)-1.$$ By Assertion A, this is only possible if $$|V_1|=2, \quad  |A|=\frac{\mathsf D(G)+1}{2}\quad\und\quad |B|=|C|=
\frac{\mathsf D(G)-1}{2}\quad\mbox{ with\quad $\mathsf D(G)=mn+m-1$  odd},$$ which we now assume for the final two cases of the proof, where by symmetry we now assume $-U_2$ has type II.

\smallskip
\noindent
CASE 5: \ $U_1$ is of type I(b) and $-U_2$ is of type II, say
\[
U_1 = e_1^{m-1} \prod_{i=1}^{mn} (x_{i}e_1+e_2) \quad \und\quad
-U_2 = f_1^{sm-1}f_2^{(n-s)m+\epsilon} \prod_{i=1}^{m-\epsilon}
      (-y_{i}f_1+f_2),
\] where $\{e_1,\,e_2\}$ is a basis for $G$ with $\ord(e_2)=mn>m$ and $\ord(e_1)=m$, where $\{f_1,\,f_2\}$ is a generating set for $G$ with $\ord(f_2)=mn$ and $\ord(f_1)\geq 2m$, and where $y_1+\ldots+y_{m-\epsilon}=m-1$ with $y_i\in [1,m-1]$, $\epsilon\in [1,m-1]$ and $s\in [1,n-1]$. Moreover,  either $s=1$ or $mf_1=mf_2$, with both holding when $n=2$.

\medskip

Since $|A|\geq \frac32 m>m-1$, we must have $f_\nu\in \supp(A)$ for some $\nu\in [1,2]$. Since
$\ord(f_\nu)\geq 2m >m=\ord(e_1)$, we cannot have $f_\nu=e_1$. Thus $f_\nu\in \la e_1\ra +e_2$. It is easily noted that any $g\in \la e_1\ra +e_2$ has $\ord(g)=\ord(e_2)=mn$. Moreover, $U_1$ will also have type I(b) using the basis $\{e_1,\,g\}$ replacing each $x_i$ with $x_i-\alpha$, where $g=\alpha e_1+e_2$. Consequently, since $f_\nu\in \la e_1\ra +e_2$ for some $\nu\in [1,2]$, we see that we may w.l.o.g. assume
\be\label{static}f_1=e_2\quad\mbox{ or } \quad f_2=e_2.\ee

\smallskip
\noindent
CASE 5.1: \ $n\geq 3$

Let us first show that \be\label{mickey4}mf_2=me_2.\ee If $s>1$, then this follows from Main Proposition \ref{5.4} and \eqref{static}. If  $s=1$, then $|A|=\frac{mn+m}{2}\geq 2m>2m-2$ (in view of $n\geq 3$), whence $f_2\in \supp(A)$.
Hence, by the argument above CASE 5.1, we may w.l.o.g. assume $f_2=e_2,$ implying  $mf_2=me_2$ in this case as well. Thus \eqref{mickey4} is established.

\medskip

Let $H=\la me_2\ra$. Then $G/H\cong C_m^2$.
Since $n\geq 3$, we have $|C|=|B|=\frac{mn+m-2}{2}\geq 2m-1=\mathsf D(G/H)$.
Let $B'\mid B$ be a subsequence with $|B'|=2m-1$ and let $B'=e_1^t\cdot b_1\cdot\ldots\cdot b_{2m-1-t}$, where $\vp_{e_1}(B')=t\in [0,m-1]$. Then we may w.l.o.g. assume $b_i=x_ie_1+e_2$ for $i\in [1,2m-1-t]$. Since $2m-1-t\leq 2m-1$ and since $t\leq m-1$, it is readily seen that the only way $B'$ can contain a nontrivial  subsequence $T\mid B'$ with $\sigma(T)\in H=\la me_2\ra$ is if $T$ contains precisely $m$ terms from $b_1\cdot\ldots\cdot b_{2m-1-t}$, in which case $\sigma(T)=me_2$. Consequently, since $|B'|=2m-1=\mathsf D(G/H)$, we conclude that there exits a subsequence $T\mid B'$ with $$\sigma(T)=me_2=mf_2\quad\und\quad |T|\leq m+t\leq 2m-1.$$ Moreover, $T$ will be a proper subsequence of $B$ unless $n=3$ (so that $2m-1=|B|=|B'|=|T|$) and (w.l.o.g. re-indexing the $x_ie_1+e_2$) $$B=e_1^{m-1}\prod_{i=1}^m (x_ie_1+e_2)\quad\mbox{ with  } \quad \Sum{i=1}{m}x_i\equiv 1\mod m.$$
Since $|C|\geq 2m-1$, Lemma \ref{lem-tech} ensures that there is a subsequence $R\mid C$ with $\sigma(R)=mf_2$ and $|R|\leq 2m-1$. Moreover, $R$ will be a proper subsequence unless $n=3$ and $$C=f_1^{m-1}f_2^\epsilon\prod_{i=1}^{m-\epsilon}(-y_if_1+f_2).$$ Now $(-T)R$ is a nontrivial zero-sum subsequence of $(-B)C=U_3$. Since $U_3$ is an atom, this is only possible if $T=B$ and $R=C$. Thus $n=3$ and
$$U_3=(-e_1)^{m-1}\prod_{i=1}^m (-x_ie_1-e_2)f_1^{m-1}f_2^\epsilon\prod_{i=1}^{m-\epsilon}(-y_if_1+f_2),$$ where $\Sum{i=1}{m}x_i\equiv 1\mod m$ and $\Sum{i=1}{m-\epsilon}y_i=m-1$. Since $\vp_{f_2}(-U_2)=(n-s)m+\epsilon\geq m+\epsilon>\epsilon$, we conclude that $f_2\in \supp(A)$, whence (as argued before CASE 5.1) we may w.l.o.g. assume $e_2=f_2.$ As a result, we see that $(-x_1e_1-e_2)(-y_1f_1+e_2)f_1^{y_1}(-e_1)^z$, where $z\in [0,m-1]$ is the integer such that $z+x_1\equiv 0\mod m$, will be a zero-sum subsequence of $U_3$ of length $2+y_1+z\leq 2m<4m-2=|U_3|$, contradicting that $U_3$ is an atom.

\smallskip
\noindent
CASE 5.2: \ $n=2$.

Similar to CASE 4.2, we now have $\ord(e_2)=\ord(f_2)=\ord(f_1)=2m$, \ $s=1$, \ $\epsilon\geq 2$, \ $m\geq 4$ even (since $\mathsf D(G)=3m-1$ is odd),  and
\be\label{new-m-unity}mf_1=mf_2=me_2,\ee with
$$U_1=e_1^{m-1}\prod_{i=1}^{2m}(x_{i}e_1+e_2) \quad \und \quad  -U_2=f_1^{m-1}f_2^{m+\epsilon_2}\prod_{i=1}^{m-\epsilon}(-y_{i}f_1+f_2).
 $$
We handle several subcases.

\smallskip
\noindent
CASE 5.2.1: \ $e_1=-f_1+f_2$.

Let $t$ be the number of terms from $C$ of the form $-yf_1+f_2$ with $y\in [1,m-1]$. Then, since $e_1=-f_1+f_2$, we see that $\vp_{e_1}(A)\leq m-\epsilon-t$, so that \be\label{wellt}\vp_{-e_1}(-B)\geq \epsilon-1+t.\ee
By \eqref{static}, we have  $f_1=e_2$ or $f_2=e_2$. In either case, the hypothesis $e_1=-f_1+f_2$ ensures that $f_1,\,f_2\in \la e_1\ra+e_2$. Thus there are $|C|-t=\frac32m-1-t$ terms of $C$ from $\la e_1\ra+e_2$, say $c_1\cdot\ldots\cdot c_{\ell_1}\mid C$ with $c_i\in \la e_1\ra+e_2$ and $\ell_1=\frac32m-1-t$, and there are (by \eqref{wellt}) $$\ell_2:=|B|-\vp_{-e_1}(-B)\leq \frac32m-1-(\epsilon-1+t)=\frac32m-\epsilon-t\leq \frac32m-2-t$$ terms of $-B$ from $\la e_1\ra-e_2$, say $b_1\cdot \ldots\cdot b_{\ell_2}\mid -B$ with $b_i\in \la e_1\ra-e_2$ and $\ell_2<\ell_1$. Consequently, $(-e_1)^{\vp_{-e_1}(-B)}(b_1+c_1)\cdot\ldots\cdot (b_{\ell_2}+c_{\ell_2})\in \Fc(\la e_1\ra)$ is a sequence of terms from $\la e_1\ra\cong C_m$ of length $|B|=\frac32m-1\geq m=\mathsf D(\la e_1\ra)$. Thus the proper (in  view of $\ell_1>\ell_2$) subsequence $(-e_1)^{\vp_{-e_1}(-B)}b_1\cdot\ldots\cdot b_{\ell_2}\cdot c_1\cdot\ldots\cdot c_{\ell_2}$ of $(-B)C=U_3$ contains a nontrivial zero-sum subsequence, contradicting that $U_3$ is an atom.

\smallskip
\noindent
CASE 5.2.2: \ $f_2\in \supp(A)$.

In this case, we may assume $$f_2=e_2$$ per the argument before CASE 5.1.

First suppose that $e_1\notin \supp(A)$. Then $\vp_{-e_1}(-B)=m-1$. Since $|B|=\frac32m-1>m-1$, there must be some $-xe_1-e_2\in \supp(-B)$. If $\vp_{e_2}(C)=\vp_{f_2}(C)>0$, then $(-xe_1-e_2)e_2(-e_1)^z$, where $z\in [0,m-1]$ is the integer with $z+x\equiv 0\mod m$, will be a zero-sum subsequence of $U_3$ of length $z+2\leq m+1<3m-2=|U_3|$, contradicting that $U_3$ is an atom. Therefore we instead assume $\vp_{f_2}(C)=0$. Thus $m-1\geq \vp_{f_1}(C)\geq |C|-(m-\epsilon)=\frac{m}{2}-1+\epsilon\geq \frac{m}{2}$, implying $\epsilon\leq \frac{m}{2}$, and there are at least $|C|-m+1\geq \frac{m}{2}>0$ terms in $C$ of the form $-y_if_1+e_2$ with $y_i\in [1,\epsilon]\subseteq [1,\frac{m}{2}]$. Let $-yf_1+e_2\in\supp(C)$ with $y\in [1,\frac{m}{2}]$ be one such term. Then $f_1^{y}(-yf_1+e_2)(-xe_1-e_2)(-e_1)^z$, where $z\in [0,m-1]$ is the integer such that $x+z\equiv 0\mod 0$, is a zero-sum subsequence of $U_3$ of length $y+z+2\leq\frac32m+1<3m-2=|U_3|$, contradicting that $U_3$ is an atom. So we instead conclude that $e_1\in \supp(A)$. As a result, since $\ord(e_1)=m<2m=\ord(f_1)=\ord(f_2)=\ord(e_2)$, we must have
\be\label{samei}e_1=-yf_1+f_2=-yf_1+e_2\quad\mbox{ for some $y\in [1,\epsilon]$}.\ee Furthermore, since $me_1=0$, we conclude from $\ord(e_2)=2m$ that $y$ is odd, and in view of CASE 5.2.1, we can assume $y\geq 3$.

\bigskip

Suppose $f_1\in \supp(A)$. Then $f_1=xe_1+e_2$ for some $x\in \Z$. Combining this with \eqref{samei} yields $-e_1+e_2=yf_1=xye_1+ye_2$, which implies $y\equiv 1\mod 2m$. Hence, since $y\in [1,m-1]$, we conclude that $y=1$, which is contrary to our above assumption. So we may instead assume $f_1\notin \supp(A)$, implying $$\vp_{f_1}(C)=m-1.$$

\bigskip

Each term of $A$ equal to $e_1=-yf_1+f_2=-yf_1+e_2$  is also equal to some $-y_if_1+f_2$. Thus $3\vp_{e_1}(A)\leq \vp_{e_1}(A)y\leq y_1+\ldots+y_{m-\epsilon}=m-1$, implying $\vp_{e_1}(A)\leq \frac{m-1}{3}$ and $$\vp_{-e_1}(-B)\geq \frac{2m-2}{3}\geq\frac{m}{2}.$$ Since $\vp_{f_1}(C)=m-1$, we find that there are precisely $|C|-m+1=\frac{m}{2}$ terms of $C$ either equal to $e_2=f_2$ or $-y_if_1+e_2$ for some $i$. Hence, since $y_1+\ldots+y_{m-\epsilon}=m-1=\vp_{f_1}(C)$ with the $y_i\in [1,m-1]$, we see that we can find disjoint subsequences $T_1\cdot\ldots\cdot T_{m/2}\mid C$ with each $T_i\in \Fc(G)$ a  subsequence having $\sigma(T_i)=e_2$. There are at least $|B|-m+1=\frac{m}{2}$ terms of $-B$ of the form $-xe_1-e_2$, say $b_1\cdot\ldots\cdot b_{m/2}\mid -B$ with $b_i\in \la e_1\ra-e_2$ for all $i$. Now $(\sigma(T_1)+b_1)\cdot\ldots \cdot (\sigma(T_{m/2})+b_{m/2})(-e_1)^{m/2}\in \Fc(\la e_1\ra)$ is a subsequence of terms from $\la e_1\ra$ of length $m=\mathsf D(\la e_1\ra)$. Consequently, the subsequence $T_1\cdot\ldots\cdot T_{m/2} \cdot b_1\cdot\ldots\cdot b_{m/2}\cdot (-e_1)^{m/2}$ of $(-B)C=U_3$ contains a nontrivial zero-sum subsequence of length at most $|T_1|+\ldots+|T_{m/2}|+m\leq |C|+m=\frac52m-1<3m-2=|U_3|$, contradicting that $U_3$ is an atom and completing CASE 5.2.1.

\smallskip
\noindent
CASE 5.2.3: \ $f_2\notin \supp(A)$.

Since $f_2\notin\supp(A)$, all terms of $A$ not equal to $f_1$ are equal to some $-y_if_1+f_2$, and  there are at least $|A|-m+1=\frac{m}{2}+1$ such terms of $A$. If $y_i\geq 2$ for all these terms, then we obtain the contradiction $m+2=(\frac{m}{2}+1)2\leq y_1+\ldots+y_{m-\epsilon}=m-1$. Thus  $-f_1+f_2\in \supp(A)$. If $-f_1+f_2=xe_1+e_2$ for some $x\in \Z$, then \eqref{new-m-unity} implies $0=-mf_1+mf_2=xme_1+me_2=me_2$, contradicting that $\ord(e_2)=2m$.  Therefore we instead conclude that $-f_1+f_2=e_1,$ so that CASE 5.2.1 completes the proof of CASE 5.

\smallskip
\noindent
CASE 6: \ $U_1$ is of type I(a) and $-U_2$ is of type II, say
\[
U_1 = e_1^{mn-1} \prod_{i=1}^{m} (x_{i}e_1+e_2) \quad \und\quad
-U_2 = f_1^{sm-1}f_2^{(n-s)m+\epsilon} \prod_{i=1}^{m-\epsilon}
      (-y_{i}f_1+f_2),
\] where $\{e_1,\,e_2\}$ is a basis for $G$ with $\ord(e_2)=m$ and $\ord(e_1)=mn>m$, where $\{f_1,\,f_2\}$ is a generating set for $G$ with $\ord(f_2)=mn$ and $\ord(f_1)\geq 2m$, and where $y_1+\ldots+y_{m-\epsilon}=m-1$ with $y_i\in [1,m-1]$, $\epsilon\in [1,m-1]$ and $s\in [1,n-1]$. Moreover,  either $s=1$ or $mf_1=mf_2$, with both holding when $n=2$.

\smallskip
\noindent
CASE 6.1: \ $n\geq 3$.

Since $|A|=\frac{mn+m}{2}>m$, we must have $e_1\in \supp(A)$. If  $e_1=-y_if_1+f_2$ for some $y_i\in [1,m-1]$, then we obtain the contradiction $|A|\leq \vp_{e_1}(A)+m\leq (m-\epsilon)+m\leq  2m-1<\frac{mn+m}{2}=|A|$ (in view of $n\geq 3$). Therefore either \be\label{stand}e_1=f_1\quad\mbox{ or } \quad e_1=f_2.\ee

 Suppose $s=1$. If $e_1=f_2$, then $\frac{mn+m}{2}=|A|\geq\vp_{e_1}(A)=(n-1)m+\epsilon\geq mn-m+1$, contradicting that $n\geq 3$. If $e_1=f_1$, then $|A|\leq \vp_{e_1}(A)+m\leq \vp_{f_1}(-U_2)+m= 2m-1<\frac{mn+m}{2}=|A|$, again in view of $n\geq 3$, which is  a contradiction. So (in view of \eqref{stand}) we may instead assume $s>1$, whence \be\label{goofy}mf_2=mf_1=me_1,\ee where the first equality follows from Main Proposition \ref{5.4} and the second from \eqref{stand}.

The argument is now similar to  CASE 5.1. Let $H=\la me_1\ra$. Then $G/H\cong C_m^2$.
Since $n\geq 3$, we have $|C|=|B|=\frac{mn+m-2}{2}\geq 2m-1=\mathsf D(G/H)$.
Let $B'\mid B$ be a subsequence with $|B'|=2m-1$. If $\vp_{e_1}(B')\geq m$, then $B'$ will contain a subsequence $T=e_1^{m}$ with $\sigma(T)=me_1$. If $\vp_{e_1}(B')<m$, then this is only possible if $$B'=e_1^{m-1}\prod_{i=1}^m (x_ie_1+e_2)\quad\mbox{ with  } \quad \Sum{i=1}{m}x_i\equiv 1\mod mn,$$ in which case it is easily seen that $T=B'$ is a subsequence of $B'$ with $\sigma(T)=me_1$.
Since $|C|\geq 2m-1$, Lemma \ref{lem-tech} ensures that there is a subsequence $R\mid C$ with $\sigma(R)=mf_2$ and $|R|\leq 2m-1$. Moreover, $R$ will be a proper subsequence unless $n=3$ and $$C=f_1^{m-1}f_2^\epsilon\prod_{i=1}^{m-\epsilon}(-y_if_1+f_2).$$ Now $(-T)R$ is a nontrivial zero-sum subsequence of $(-B)C=U_3$ (in view of \eqref{goofy}). Since $U_3$ is an atom, this is only possible if $T=B'=B$ and $R=C$. Thus $n=3$ and
$$U_3=(-e_1)^{m-1}\prod_{i=1}^m (-x_ie_1-e_2)f_1^{m-1}f_2^\epsilon\prod_{i=1}^{m-\epsilon}(-y_if_1+f_2).$$  As a result, since $\vp_{f_2}(-U_2)=(n-s)m+\epsilon\geq m+\epsilon>m>\epsilon$, we conclude that $f_2\in \supp(A)$. Moreover, $\vp_{f_2}(A)\geq m+\epsilon-\vp_{f_2}(C)=m$. Hence, as the only term in $U_1$ with multiplicity at least $m$ is $e_1$ (recall $|\supp(U_1)|\geq 3$ as remarked after Main Proposition \ref{5.4}), we conclude that $e_1=f_2$, in which case $(-e_1)(f_2)=(-e_1)(e_1)$ is a proper zero-sum subsequence of $U_3$, contradicting that $U_3$ is an atom.

\smallskip
\noindent
CASE 6.2: \ $n=2$.

Similar to CASE 5.2, we now have $\ord(e_1)=\ord(f_2)=\ord(f_1)=2m$, \ $s=1$, \ $\epsilon\geq 2$, \ $m\geq 4$ even (since $\mathsf D(G)=3m-1$ is odd),  and
\be\label{newer-m-unity}mf_1=mf_2=me_1,\ee with
$$U_1=e_1^{2m-1}\prod_{i=1}^{m}(x_{i}e_1+e_2) \quad \und \quad  -U_2=f_1^{m-1}f_2^{m+\epsilon}\prod_{i=1}^{m-\epsilon}(-y_{i}f_1+f_2).
 $$
Since $|A|=\frac32m>m$, we must have $e_1\in \supp(A)$. We have three possibilities for $e_1$.

\medskip

Suppose $e_1=f_1$. Then $\vp_{e_1}(A)=\vp_{f_1}(A)=m-1$, implying $$\vp_{-e_1}(-B)=m$$ and $\vp_{f_1}(C)=0$. Let $T=c_1\cdot\ldots\cdot c_m\mid C$ be any length $m$ subsequence of $C$. As $\vp_{f_1}(C)=0$, each $c_i=-z_if_1+f_2=-z_ie_1+f_2$ for some $z_i\in [0,m-1]$ with $$0\leq z:=z_1+\ldots+z_m\leq y_1+\ldots +y_{m-\epsilon}=m-1.$$
Then $\sigma(T)=-zf_1+mf_2=(m-z)e_1$ (in view of \eqref{newer-m-unity} and $e_1=f_1$), in which case $(-e_1)^{m-z}T$ is a zero-sum subsequence of $(-B)C=U_3$ of length $m-z+|T|\leq 2m<3m-2=|U_3|$, contradicting that $U_3$ is an atom.

\medskip

Suppose $e_1=f_2$. Then $\vp_{e_1}(A)=\vp_{f_2}(A)=m+\epsilon\leq |A|=\frac32 m$, implying $\epsilon\leq \frac{m}{2}$,  $$\vp_{-e_1}(-B)=m-1-\epsilon\geq \frac{m}{2}-1>0$$ and $\vp_{f_2}(C)=0$. Since $\vp_{f_2}(A)=m+\epsilon$, it follows that there are at most $|A|-\vp_{f_2}(C)=\frac{m}{2}-\epsilon$ terms of $A$ of the form $-y_if_1+f_2=-y_if_1+e_1$, meaning there are at least $m-\epsilon-(\frac{m}{2}-\epsilon)=\frac{m}{2}$ terms of $C$ of this form, say $b_1\cdot\ldots\cdot b_\ell\mid C$ with w.l.o.g. $b_i=-y_if_1+f_2=-y_if_1+e_1$ for $i\in [1,\ell]$ and $\ell\geq \frac{m}{2}$. If $b_i\geq 2$ for all $i\in [1,\ell]$, then we obtain the contradiction $m\leq 2\ell\leq b_1+\ldots+b_\ell\leq y_1+\ldots+y_{m-\epsilon}=m-1$. Therefore we may assume
$y_i=1$ for some $i\in [1,\ell]$, meaning $$-f_1+e_1\in \supp(C).$$ Since $\vp_{f_2}(A)=m+\epsilon$, there are also at most $|A|-m-\epsilon=\frac{m}{2}-\epsilon$ terms of $A$ equal to $f_1$, whence $\vp_{f_1}(C)\geq m-1-(\frac{m}{2}-\epsilon)=\frac{m}{2}-1+\epsilon>0$. Hence $f_1(-e_1)(-f_1+e_1)$ is a zero-sum subsequence of $(-B)C=U_3$ of length $3<3m-2=|U_3|$, contradicting that $U_3$ is an atom.

\medskip

It remains to consider the case when $e_1=-yf_1+f_2$ for some $y\in [1,m-1]$. Moreover, in view of \eqref{newer-m-unity} and $\ord(e_1)=2m$, we must have $y$ even, whence $y\geq 2$. Thus $2\vp_{e_1}(A)\leq y_1+\ldots+y_{m-\epsilon}=m-1$, implying $\vp_{e_1}(A)\leq \frac{m-1}{2}.$
But now $\frac32 m=|A|\leq \vp_{e_1}(A)+m\leq\frac{m-1}{2}+m$, which is a proof concluding contradiction.
\end{proof}

\providecommand{\bysame}{\leavevmode\hbox to3em{\hrulefill}\thinspace}
\providecommand{\MR}{\relax\ifhmode\unskip\space\fi MR }
\providecommand{\MRhref}[2]{%
  \href{http://www.ams.org/mathscinet-getitem?mr=#1}{#2}
}
\providecommand{\href}[2]{#2}

\end{document}